\numberwithin{equation}{section}
\theoremstyle{plain}
\newtheorem{thm}{Theorem}[section]
\newtheorem{lem}[thm]{Lemma}
\newtheorem{prop}[thm]{Proposition}
\newtheorem{cor}[thm]{Corollary}
\newtheorem*{thm*}{Theorem}
\newtheorem*{lem*}{Lemma}
\newtheorem*{prop*}{Proposition}
\newtheorem*{cor*}{Corollary}
\theoremstyle{definition}
\newtheorem{defn}[thm]{Definition}
\newtheorem*{defn*}{Definition}
\newtheorem{construction}[thm]{Construction}
\newtheorem{ex}[thm]{Example}
\newtheorem{ex*}{Example}
\newtheorem{rem}[thm]{Remark}
\newtheorem*{rem*}{Remark}
\newtheorem{notation}[thm]{Notation}{}
\newtheorem{convention}[thm]{Convention}{}
{}
{}
\theoremstyle{remark}
{}
{}
{}
\def\to{\longrightarrow} 
\renewcommand{\xrightarrow}[1]{\stackrel{#1}{\to}}
\def\Ab{\mathsf{Ab}}
\def\grMod{\mathsf{grMod}}
\def\EE{\mathbb{E}}
\def\NN{\mathbb{N}}
\def\ZZ{\mathbb{Z}}
\def\sfA{\mathsf{A}}
\def\sfC{\mathsf{C}}
\def\sfD{\mathsf{D}}
\def\sfK{\mathsf{K}}
\def\sfL{\mathsf{L}}
\def\sfR{\mathsf{R}}
\def\sfS{\mathsf{S}}
\def\sfT{\mathsf{T}}
\def\sfX{\mathsf{X}}
\def\mcA{\mathcal{A}}
\def\mcB{\mathcal{B}}
\def\mcD{\mathcal{D}}
\def\mcF{\mathcal{F}}
\def\mcG{\mathcal{G}}
\def\mcJ{\mathcal{J}}
\def\mcO{\mathcal{O}}
\def\mcP{\mathcal{P}}
\def\mcQ{\mathcal{Q}}
\def\mcS{\mathcal{S}}
\def\mcU{\mathcal{U}}
\def\mcV{\mathcal{V}}
\def\mcW{\mathcal{W}}
\def\mcX{\mathcal{X}}
\def\mcZ{\mathcal{Z}}
\def\bfB{\mathbf{B}}
\def\op{\mathrm{op}}
\def\perf{\sf{perf}}
\def\unit{\mathbf{1}}
\DeclareMathOperator{\add}{add}
\DeclareMathOperator{\colim}{colim}
\DeclareMathOperator{\hocolim}{hocolim}
\DeclareMathOperator{\Hom}{Hom}
\DeclareMathOperator{\coker}{coker}
\DeclareMathOperator{\End}{End}
\DeclareMathOperator{\id}{id}
\DeclareMathOperator{\Modu}{\mathsf{Mod}}
\DeclareMathOperator{\gr}{\mathsf{gr}}
\DeclareMathOperator{\Coh}{Coh}
\DeclareMathOperator{\QCoh}{QCoh}
\DeclareMathOperator{\Spec}{Spec}
\DeclareMathOperator{\cone}{cone}
\DeclareMathOperator{\Spc}{Spc}
\DeclareMathOperator{\supp}{supp}
\DeclareMathOperator{\loc}{\mathrm{loc}}
\DeclareMathOperator{\thick}{thick}
\DeclareMathOperator{\Vis}{Vis}
\DeclareMathOperator{\Shv}{Shv}
\definecolor{internationalkleinblue}{rgb}{0.0, 0.18, 0.65}
\definecolor{darkorange}{rgb}{1.00, 0.58, 0.00}
\newcommand\imCMsym[4][\mathord]{
	\DeclareFontFamily{U} {#2}{}
	\DeclareFontShape{U}{#2}{m}{n}{
		<-6> #25
		<6-7> #26
		<7-8> #27
		<8-9> #28
		<9-10> #29
		<10-12> #210
		<12-> #212}{}
	\DeclareSymbolFont{CM#2} {U} {#2}{m}{n}
	\DeclareMathSymbol{#4}{#1}{CM#2}{#3}
}
\newcommand\alsoimCMsym[4][\mathord]{\DeclareMathSymbol{#4}{#1}{CM#2}{#3}}
\DeclareMathOperator{\qbasic}{\mathsf{QBasic}}
\DeclareMathOperator{\qbpresheaf}{\mathsf{QBPsh}}
\title{Associated Sheaf Functors in tt-geometry}
\author{James Rowe}
\address{James Rowe, School of Mathematics and Statistics,
	University of Glasgow,
	University Place,
	Glasgow G12 8QQ
}
\email{j.rowe.1@research.gla.ac.uk}
\begin{document}
	\begin{abstract}
		Given a tensor triangulated category we investigate the geometry of the Balmer spectrum as a locally ringed space. Specifically we construct functors assigning to every object in the category a corresponding sheaf and a notion of support based upon these sheaves. We compare this support to the usual support in tt-geometry and show that under reasonable conditions they agree on compact objects. We show that when tt-categories satisfy a scheme-like property then the sheaf associated to an object is quasi-coherent, and that in the presence of an appropriate t-structure and affine assumption, this sheaf is in fact the sheaf associated to the object's zeroth cohomology.  
	\end{abstract}
\maketitle
\let\clearpage\relax
\tableofcontents

\section{Introduction}
Given an essentially small tensor triangulated category, one can construct a topological space, called the Balmer spectrum, from the collection of prime tensor ideals. Important collections of subcategories such as localising or thick tensor ideals can often be classified by subsets of this space. Moreover, just as the spectrum of a commutative ring can be equipped with the usual structure sheaf, the Balmer spectrum also naturally admits the structure of a locally ringed space. In this work we investigate sheaves over the Balmer spectrum equipped with this structure sheaf, emphasising the geometry of the space and its influence on ideas of support and the cohomology of objects. 
More precisely, given a compactly generated tt-category $\sfT$ generated by the tensor unit $\unit$ we construct an \emph{associated sheaf functor} 
\[
[\unit, -]^\# : \sfT \to \Shv_{\mcO_\sfT^\bullet}(\Spc(\sfT^c))
\]

Using the geometric information of this sheaf and its stalks we can define a support for an object $X \in \sfT$ by setting $\supp^\bullet(X,\unit)=\{\mcP \ \vert \ [\unit, X]^\#_{\mcP}\neq 0\}$. This notion of support satisfies many of the desirable properties usually satisfied by support theories and it is natural to ask how this geometric flavour of support compares to the usual notions. We have the following comparison:

\begin{thm*}
	Suppose $\sfT^c=\thick(\unit)$ and the Balmer spectrum of $\sfT^c$ is a noetherian topological space.
	Then for all objects $X \in \sfT$ there is containment
	\[\supp X \subseteq \supp^\bullet(X,\unit),\]
	where the support on the left is the support in the sense of Balmer-Favi.
	Moreover if $X$ is compact then there is an equality
	\[\supp X = \supp^\bullet(X,\unit).\]
	
\end{thm*}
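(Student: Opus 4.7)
The plan is to interpret the stalk $[\unit, X]^\#_\mcP$ as graded morphisms from $\unit$ to $X$ in an appropriate local tt-category at $\mcP$, and then compare this with the Balmer--Favi description of $\supp X$. Under the noetherian assumption on $\Spc(\sfT^c)$, Balmer--Favi support can be described via the tensor-idempotent $g(\mcP)$ supported at $\{\mcP\}$, so $\mcP \in \supp X$ if and only if $g(\mcP) \otimes X \neq 0$ in $\sfT$.

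For the first inclusion $\supp X \subseteq \supp^\bullet(X, \unit)$, the hypothesis $\sfT^c = \thick(\unit)$ makes $\unit$ a compact generator of $\sfT$, so the graded functor $\Hom_\sfT^*(\unit, -)$ is conservative on $\sfT$. If $g(\mcP) \otimes X \neq 0$, then there exist $n \in \ZZ$ and a nonzero morphism $\unit \to \Sigma^n(g(\mcP) \otimes X)$. By the construction of the associated sheaf, such a morphism contributes a nonzero element to the filtered colimit that defines the stalk $[\unit, X]^\#_\mcP$, so this stalk is nonzero in degree $n$ and hence $\mcP \in \supp^\bullet(X, \unit)$.

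For the reverse inclusion when $X$ is compact, recall that for compact objects the Balmer--Favi support coincides with the classical Balmer support $\{\mcP \mid X \notin \mcP\}$. If $\mcP \notin \supp X$ then $X \in \mcP$, and in particular $X$ becomes zero after tt-categorical localisation away from $\mcP$. Consequently, for a cofinal system of open neighbourhoods $U$ of $\mcP$ the sections $[\unit, X](U)$ vanish, and therefore $[\unit, X]^\#_\mcP = 0$, giving $\mcP \notin \supp^\bullet(X, \unit)$.

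The main obstacle is the identification of the stalk of $[\unit, X]^\#$ with morphisms in the tt-categorical localisation at $\mcP$: a priori the stalk is just a colimit of presheaf sections, and promoting this colimit to a graded Hom-group in a local category requires compactness of $\unit$ together with the noetherian hypothesis, which guarantees that $\mcP$ admits a neighbourhood basis compatible with the Rickard idempotents that assemble into $g(\mcP)$. Once this identification is in hand, both inclusions reduce to combining the dichotomy ``$g(\mcP) \otimes X = 0$ or not'' with the conservativity of $\Hom_\sfT^*(\unit, -)$.
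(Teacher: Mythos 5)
The compact half of your argument is sound and is essentially the paper's: under the noetherian hypothesis the Balmer--Favi support of a compact object agrees with the Balmer support, and if $X\in\mcP$ then $U(X)$ is a quasi-compact open neighbourhood of $\mcP$ such that $X_V=0$ for every quasi-compact open $V\subseteq U(X)$, so all sections in a cofinal system vanish and the stalk is zero (the paper instead identifies the stalk with $\Hom^\bullet_{\sfT^c/\mcP}(\unit,X)$, as in Lemma \ref{sheafintt}; both are fine). The genuine gap is in the first inclusion. From $\Gamma_\mcP\unit\otimes X\neq 0$ and generation by $\unit$ you correctly get a nonzero map $f:\unit\to\Sigma^n(\Gamma_\mcP\unit\otimes X)$ for some $n$, but the claim that $f$ ``contributes a nonzero element to the filtered colimit defining the stalk'' is unjustified: the presheaf sections are $\Hom^\bullet_{\sfT(U)}(\unit_U,X_U)$, and $f$ has target $\Gamma_\mcP\unit\otimes X$, not $X$ or any localisation $X_U$. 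The only natural way to manufacture a stalk element from $f$ is to compose with the canonical map $\Gamma_\mcP\unit\otimes X\to L_{\mcZ(\mcP)}\unit\otimes X$ and then use the identification $[\unit,X]^\#_\mcP\cong\Hom^\bullet_\sfT(\unit,L_{\mcZ(\mcP)}\unit\otimes X)$ of Lemma \ref{lemactionstalk}; that composite can vanish, and the degree claim is false in general. Concretely, take $\sfT=D(\ZZ)$, $X=\unit=\ZZ$ and $\mcP$ the prime corresponding to $p$: then $\Gamma_\mcP\ZZ\simeq\Sigma^{-1}(\QQ/\ZZ_{(p)})$, so nonzero maps $\unit\to\Sigma^n\Gamma_\mcP\ZZ$ exist only for $n=1$, while the stalk $[\unit,\ZZ]^\#_\mcP\cong\Hom^\bullet_{D(\ZZ)}(\ZZ,\ZZ_{(p)})$ is concentrated in degree $0$, and every composite $\unit\to\Sigma\Gamma_\mcP\ZZ\to\Sigma\ZZ_{(p)}$ is zero because $\Hom_{D(\ZZ)}(\ZZ,\Sigma\ZZ_{(p)})=0$. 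So your mechanism fails even though the conclusion is true.

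The repair is short and is exactly the paper's route (Lemma \ref{ttinsheaf} with Remark \ref{gencon}): since $\Gamma_\mcP X=\Gamma_{\mcV(\mcP)}\unit\otimes\bigl(L_{\mcZ(\mcP)}\unit\otimes X\bigr)\neq 0$, first conclude $L_{\mcZ(\mcP)}\unit\otimes X\neq 0$, equivalently $X_\mcP\neq 0$ in $\sfT(\mcP)$. Because $\sfT=\loc(\unit)$, the object $X_\mcP$ lies in the localising subcategory generated by $\unit_\mcP$, so the conservativity statement (Lemma \ref{lemthickzero} and the remark following it) produces some $n$ with $\Hom_{\sfT(\mcP)}(\unit_\mcP,\Sigma^n X_\mcP)\neq 0$; Lemma \ref{lemactionstalk} (this is where the noetherian hypothesis and the homotopy-colimit technology are actually used, not merely to provide a ``compatible neighbourhood basis'') identifies this group with a graded piece of $[\unit,X]^\#_\mcP$, giving $\mcP\in\supp^\bullet(X,\unit)$. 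In other words, apply conservativity \emph{after} passing to the local category $\sfT(\mcP)$ (or to $L_{\mcZ(\mcP)}\unit\otimes X$ in $\sfT$), rather than to $\Gamma_\mcP\unit\otimes X$ in $\sfT$; with that change your proof matches the paper's.
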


In algebraic geometry, the notion of affine spaces and schemes are both powerful and prolific. By using the spectrum as an avatar for our tt-category we can pull these geometric properties into our setting. Specifically we consider Balmer's comparison map of locally ringed spaces $\rho : (\Spc(\sfT^c),\mcO_\sfT)\to (\Spec(R_\sfT),\mcO_{R_\sfT})$ between the Balmer spectrum and the spectrum of the endomorphism ring of the tensor unit. We call categories where $\rho$ is an isomorphism \emph{affine}, and call a category \emph{schematic} if it is locally affine with respect to some open cover of the spectrum. With this we obtain the following theorem:
\begin{thm*}
	Let $\sfT$ be schematic. Then for every $X \in \sfT$, the sheaf $[\unit, X]^\#$ is quasi-coherent on $(\Spc(\sfT^c),\mcO_\sfT).$
\end{thm*}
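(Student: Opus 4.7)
The plan is to exploit the local nature of quasi-coherence on a locally ringed space: a sheaf is quasi-coherent if and only if every point admits an open neighborhood on which it has the form $\widetilde M$ for some module over the ring of sections. Since $\sfT$ is schematic, by hypothesis there is an open cover $\{U_i\}$ of $\Spc(\sfT^c)$ such that each $(U_i, \mcO_\sfT|_{U_i})$ is affine, so it suffices to verify the local condition on each piece.

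To reduce to the affine case I would first appeal to (or establish) a compatibility statement saying that the associated sheaf functor restricts well to open subsets, so that $[\unit, X]^\#|_{U_i}$ agrees with the analogous associated sheaf of $X$ computed in the tt-category obtained by Verdier localization at $U_i$; given that $[\unit, -]^\#$ is defined by sheafification, this should follow essentially formally from the universal property of sheafification. This leaves the affine case, where Balmer's comparison map $\rho : (\Spc(\sfT^c), \mcO_\sfT) \to (\Spec R_\sfT, \mcO_{R_\sfT})$ is an isomorphism with $R_\sfT = \End_\sfT(\unit)$. Writing $M = \Hom_\sfT(\unit, X)$ as an $R_\sfT$-module, I would then construct a natural isomorphism between $[\unit, X]^\#$ and the quasi-coherent sheaf $\widetilde M$ under the identification $\rho$, by comparing both sheaves on the basis of principal opens $D(f)$ for $f \in R_\sfT$: the sheaf $\widetilde M$ assigns $M_f$ to $D(f)$ by definition, while the sections of $[\unit, X]^\#$ on the corresponding open of $\Spc(\sfT^c)$ are, by the construction of the associated sheaf functor, a localization of $M$ determined by the image of $f$ under $\rho$.

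The main obstacle is this last identification: showing in the affine case that the localization of $M$ used in defining $[\unit, X]^\#$ coincides with the commutative-algebra localization $M_f$. This is morally what the hypothesis of affineness buys us, but making it precise requires unwinding the construction of $[\unit, -]^\#$ together with the isomorphism $\rho$, and verifying that the multiplicative system arising on the tt side is carried to the one on the commutative-algebra side. Once this key identification is established, the remainder of the argument — gluing over the cover $\{U_i\}$, formal properties of sheafification, and compatibility of stalks — should be routine.
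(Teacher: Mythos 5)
Your proposal follows essentially the same route as the paper: a restriction compatibility lemma identifying $[\unit,X]^\#\vert_{U_i}$ with the associated sheaf of $X_{U_i}$ over $\sfT(U_i)$ (the paper's Lemma \ref{localres}), followed by the affine case, which is settled by comparing sections with $\tilde{M}$ on the basic opens $U(\cone(s))\cong D(s)$. The ``main obstacle'' you flag is exactly what the paper resolves: rigidity gives $\sfT^c_{Z}=\thick^\otimes(\cone(s))$ so that $\prescript{}{\mathrm{p}}{[}\unit,X](U(s))\cong \Hom(\unit,X)[s^{-1}]$ for compact $X$ (Proposition \ref{propcomsec}), and the finite localization theorem (Theorem \ref{hpstheorem}) extends this to arbitrary $X\in\sfT$, so your outline is correct as stated.
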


If a triangulated category admits a generator satisfying certain conditions, then the generator equips the category with a natural t-structure. This allows one to ask questions about the cohomology of objects. Our construction of the support $\supp^\bullet(X, \unit)$ includes information about the suspensions of the object $X$. We can construct a slightly different support $\supp(X, \unit)$ without this suspension data, which we call the untwisted support.    
Combining these ideas with the machinery of t-structures allows us to obtain the following theorem:
\begin{thm*}
	Let $\sfT$ be an affine category generated by the tensor unit $\unit$. Assume that $\Hom_\sfT(\unit, \Sigma^n \unit)=0$ for all $n>0$. Then for all objects $X \in \sfT$ we have
	\[\supp^\bullet (X, \unit) = \bigcup_{i \in \ZZ}\supp(H^i(X), \unit).\]
\end{thm*}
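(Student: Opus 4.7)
The plan is to translate everything to $\Spec(R_\sfT)$ via the affine hypothesis, and then decompose the associated graded sheaf degree-by-degree using the $t$-structure induced by $\unit$. Under the assumption $\Hom_\sfT(\unit, \Sigma^n \unit)=0$ for all $n>0$, the generator $\unit$ is connective and induces a $t$-structure on $\sfT$ whose heart $\mcA$ is equivalent to $\Mod R_\sfT$ via the functor $\Hom_\sfT(\unit,-)$, with $\unit$ corresponding to the (projective) regular module $R_\sfT$. I would first record this $t$-structure together with this equivalence, which is standard but worth verifying in the present generality.

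The central computation is the identification $\Hom_\sfT(\unit, \Sigma^i X)\cong H^i(X)$ as $R_\sfT$-modules for all $X \in \sfT$ and $i \in \ZZ$. This follows from the Grothendieck-style spectral sequence
\[ E_2^{p,q} = \Ext^p_\mcA(\unit, H^q(X)) \Rightarrow \Hom_\sfT(\unit, \Sigma^{p+q} X), \]
which collapses onto its $p=0$ row since $\unit$ is projective in $\mcA$, leaving only $\Hom_\mcA(\unit, H^i(X))=H^i(X)$. Alternatively one can build $X$ from its Postnikov tower via truncation triangles and check the identification inductively, using the vanishing of $\Hom_\sfT(\unit, \Sigma^j N)$ for $N \in \mcA$ and $j \neq 0$.

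With this in hand, the affine hypothesis gives $\Spc(\sfT^c)\cong \Spec(R_\sfT)$, and the associated sheaf functor sends an object $Y$ to a graded quasi-coherent sheaf whose degree-$i$ component is $\widetilde{\Hom_\sfT(\unit, \Sigma^i Y)}$. Applied to $Y=H^i(X)$, the previous step shows that only the degree-$0$ component is nonzero and equals $\widetilde{H^i(X)}$, so the untwisted support is $\supp(H^i(X), \unit)=\supp(\widetilde{H^i(X)})$. Applied to $X$ itself, a prime $\mcP$ lies in $\supp^\bullet(X,\unit)$ iff the graded stalk $[\unit,X]^\#_\mcP$ is nonzero, iff some degree-$i$ stalk $\widetilde{H^i(X)}_\mcP$ is nonzero, yielding
\[ \supp^\bullet(X,\unit) = \bigcup_{i \in \ZZ} \supp(\widetilde{H^i(X)}) = \bigcup_{i \in \ZZ} \supp(H^i(X), \unit). \]
The main obstacle I anticipate is the identification $\Hom_\sfT(\unit, \Sigma^i X)\cong H^i(X)$: the spectral sequence (or Postnikov) argument requires confirming that $\unit$ is projective in $\mcA$ and that $\Hom_\sfT(\unit,-)$ really does compute the heart's cohomology, and this is where both the affine assumption and the connectivity hypothesis $\Hom_\sfT(\unit,\Sigma^n\unit)=0$ for $n>0$ must work together to make $\sfT$ behave sufficiently like $D(R_\sfT)$.
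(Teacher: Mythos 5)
Your overall skeleton is the same as the paper's: decompose the twisted support as $\supp^\bullet(X,\unit)=\bigcup_{i\in\ZZ}\supp(\Sigma^iX,\unit)$ (Remark \ref{untwisteddecomp}) and then, using affineness, identify $[\unit,\Sigma^iX]^\#$ with the sheaf attached to $H^i(X)$. The gap is in how you propose to prove the central identification $\Hom_\sfT(\unit,\Sigma^iX)\cong\Hom_\sfT(\unit,H^iX)$. First, the hypothesis only gives $\Hom_\sfT(\unit,\Sigma^n\unit)=0$ for $n>0$, i.e. $\unit\in\sfT^{\leq 0}$; nothing forces $\Hom_\sfT(\unit,\Sigma^{-n}\unit)=0$, so $\unit$ need not lie in the heart $\mcA$ at all. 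Consequently ``$\Ext^p_\mcA(\unit,-)$'' is not defined as written, and it is $H^0\unit$, not $\unit$, that corresponds to the regular module $R_\sfT$ under the equivalence $\mcA\simeq\Modu R_\sfT$ of Theorem \ref{hkmtstruc}. Second, even after this repair, the spectral sequence $E_2^{p,q}=\Ext^p_\mcA(H^0\unit,H^qX)\Rightarrow\Hom_\sfT(\unit,\Sigma^{p+q}X)$ is not available for free in a bare triangulated category equipped with a t-structure, and for unbounded $X$ its convergence --- equivalently the left/right completeness one would need to run your Postnikov-tower induction, which only directly handles bounded objects --- is precisely what is unproven; the hypotheses of the theorem supply no such completeness. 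So the key step is not justified as proposed, even though the statement you are aiming for is correct.

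The fix is much more elementary, and is what the paper does in Proposition \ref{affinecohom} and Lemma \ref{littlecohlemma}: because the t-structure is the one connectively generated by $\unit$, its aisles are \emph{defined} by vanishing of $\Hom_\sfT(\unit,\Sigma^j-)$. Hence from the single truncation triangle $\tau^{\leq-1}X\to X\to\tau^{\geq0}X\to\Sigma\tau^{\leq-1}X$ one reads off $\Hom_\sfT(\unit,X)\cong\Hom_\sfT(\unit,\tau^{\geq0}X)$ directly (the outer terms vanish since $\tau^{\leq -1}X\in\sfT^{\leq-1}$), and the adjunction $i^{\leq0}\dashv\tau^{\leq0}$ together with $\unit\in\sfT^{\leq0}$ gives $\Hom_\sfT(\unit,\tau^{\geq0}X)\cong\Hom_\sfT(\unit,H^0X)$; no tower, spectral sequence, or completeness hypothesis enters. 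Applying this to $\Sigma^iX$ and sheafifying under the affine hypothesis yields $[\unit,\Sigma^iX]^\#\cong[\unit,H^iX]^\#$, after which your concluding union-over-stalks argument agrees with the paper's. Your auxiliary observation that $\Hom_\sfT(\unit,\Sigma^jN)=0$ for $N\in\mcA$ and $j\neq0$ is correct (again immediately from the defining vanishing conditions), but once the identification above is in place it is not needed.
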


We will now lay out the format of the remainder of this article. In \S2 we give the general preliminaries on tt-categories and actions of tt-categories. In \S3 we give the details of the Balmer spectrum, including its formulation as a locally ringed space. We also recall the definition of support theories obtained from tensor triangulated categories acting on triangulated categories. \S4 contains our construction of relative sheaf functors, in the generality of a triangulated category acted on by a tt-category, and including twisting by an invertible object. In \S5 we compare the support produced by these sheaf functors to the usual notion of support. \S6 details the conditions under which properties such as (quasi-)coherence can lead to the formation of thick subcategories. Affine and schematic categories are analysed in \S7, leading us to show that in such categories the sheaves associated to objects are always quasi-coherent. We also introduce the notion of a quasi-affine tt-category and investigate the associated comparison maps. \S8 compares the usual gluing of sheaves with the tensor-triangular notion of gluing over a Mayer-Vietoris cover and the significant differences between the operations. Finally in \S9 we show how the sheaf functors interact with naturally occurring t-structures and work towards the theorem showing that when our category is affine, the twisted support of an object can be obtained by considering the untwisted supports of its cohomologies.

\section{Preliminaries}

In this section we lay out the framework of triangulated categories and the action of a tensor-triangulated category. Details on the axioms of triangulated categories can be found in \cite{NeeCat}.

\begin{defn}\cite{BaSpec}
	A \emph{tensor triangulated category} is a triple $(\sfT, \otimes, \unit)$ consisting of a triangulated category $\sfT$, a symmetric monoidal product $\otimes: \sfT\times \sfT \to \sfT$ which is exact in each variable, with unit $\unit$.
\end{defn}

Given a triangulated category there are subcategories preserving various parts of the triangulated structure.

\begin{defn}
	Given a triangulated category $\sfT$, a subcategory is said to be \emph{thick} if it is triangulated and closed under direct summands. Explicitly, a full subcategory is thick if it is closed under suspensions, cones, and direct summands. Given a collection of objects $X$ in $\sfT$, we denote by $\thick(X)$ the smallest thick subcategory of $\sfT$ containing $X$. It is referred to as the \emph{thick subcategory generated by $X$.}
\end{defn}

\begin{defn}
	A subcategory of $\sfT$ is said to be \emph{localising} if it is triangulated and closed under arbitrary coproducts. Given a collection of objects $X$ in $\sfT$, we denote by $\loc(X)$ the smallest localising subcategory of $\sfT$ containing $X$. It is referred to as the \emph{localising subcategory generated by $X$.}
\end{defn}

\begin{lem}
	Every localising subcategory is thick.
\end{lem}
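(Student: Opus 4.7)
The plan is to argue that any localising subcategory $\mcL \subseteq \sfT$ is closed under direct summands. The strategy is that idempotents in a triangulated category with countable coproducts split via a homotopy colimit of their iterations, and that this splitting can be performed entirely inside $\mcL$.

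First I would reduce the claim to splitting idempotents: suppose $X \in \mcL$ with $X \cong A \oplus B$ in $\sfT$, and let $e \colon X \to X$ be the idempotent given by projection onto $A$ followed by inclusion. It suffices to show that $A$ is isomorphic to an object of $\mcL$.

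Next I would construct $A$ as a telescope. Set $Y = \bigoplus_{n \geq 0} X$, which lies in $\mcL$ since $\mcL$ is closed under arbitrary coproducts. Define the shift map $\sigma \colon Y \to Y$ whose restriction to the $n$-th summand is $e \colon X \to X$ landing in the $(n+1)$-th summand, and complete the morphism $\operatorname{id}_Y - \sigma$ to a distinguished triangle
\[
Y \xrightarrow{\operatorname{id} - \sigma} Y \longrightarrow T \longrightarrow Y[1].
\]
Because $\mcL$ is triangulated and contains both copies of $Y$, the object $T$ lies in $\mcL$.

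Finally I would identify $T$ with $A$. This is the standard computation that the telescope of a split idempotent recovers the summand it selects: the coherent family of maps $X \to A$ obtained by factoring $e$ through $A$ induces a morphism $T \to A$, and conversely the inclusion $A \hookrightarrow X$ into the $0$-th summand of $Y$ composed with the structure map $Y \to T$ gives a morphism $A \to T$; one checks these are mutually inverse using the defining triangle and the relation $e^2 = e$. Hence $A \cong T \in \mcL$, which is the desired conclusion.

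The main obstacle is the final identification of the telescope with $A$. While the existence of the triangle and the membership of $T$ in $\mcL$ are immediate from the closure properties, verifying $T \cong A$ requires a small but careful diagram chase with the split idempotent and the structure maps of the coproduct; everything else is a formal consequence of the definitions of localising and thick.
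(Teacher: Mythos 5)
Your proof is correct, but it takes a genuinely different route from the paper. The paper's argument is a short Eilenberg swindle: given $X\oplus Y\in\mcS$, it forms $\coprod_{n\in\NN}(X\oplus Y)\cong X\oplus\coprod_{n\in\NN}(Y\oplus X)$, observes that both the middle and third terms of the resulting split triangle are coproducts of copies of $X\oplus Y$ and hence lie in $\mcS$, and reads off $X$ as a cone of a map between objects of $\mcS$ --- three lines, using nothing beyond closure under coproducts and triangles, and requiring no identification of any homotopy colimit. Your argument is instead the B\"okstedt--Neeman telescope: you split the idempotent $e$ by forming the cone $T$ of $\id-\sigma$ on $\bigoplus_{n\ge 0}X$ and identify $T$ with the summand $A$. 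This works, but one caution on your final step: proving directly that your two maps $T\to A$ and $A\to T$ are mutually inverse is more delicate than "a small diagram chase," because $\Hom(T,T)\to\Hom(Y,T)$ induced by $Y\to T$ need not be injective, so checking the composite $T\to A\to T$ on the image of $Y$ does not suffice. The cleaner way to finish is to note that the diagram $X\xrightarrow{e}X\xrightarrow{e}\cdots$ is the direct sum of the diagrams $A\xrightarrow{\id}A\xrightarrow{\id}\cdots$ and $B\xrightarrow{0}B\xrightarrow{0}\cdots$, that the telescope construction is additive in the diagram, and that these two telescopes are $A$ and $0$ respectively. What each approach buys: the paper's swindle is minimal and self-contained; your telescope argument is the standard proof of the stronger fact that idempotents split in any triangulated category with countable coproducts, of which thickness of localising subcategories is a special case.
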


\begin{proof}
	Consider a localising subcategory $\mcS \subseteq \sfT$ containing an object of the form $Z=X\oplus Y.$ As $\mcS$ is localising it is closed under arbitrary coproducts and so contains the coproduct $\coprod_{n \in \NN}(X\oplus Y)\cong X\oplus \coprod_{n \in \NN}(Y\oplus X)$. Therefore $\mcS$ contains the triangle
	\[X\to \coprod_{n \in \NN}(X\oplus Y) \to \coprod_{n \in \NN}(X\oplus Y)\to \Sigma X\]
	and so $\mcS$ contains $X$. Therefore $\mcS$ is thick.
\end{proof}

While we allow tensor triangulated categories to be large, some smallness conditions are needed in order to obtain good behaviour. These conditions are satisfied by many examples of interest.

\begin{defn}
	Let $\sfT$ be a triangulated category admitting all set-indexed coproducts. An object $t \in \sfT$ is \emph{compact} if $\Hom_{\sfT}(t,-)$ preserves arbitrary coproducts. That is, for any family $\{x_{\lambda} \ \vert \ \lambda \in \Lambda\}$ we have
	$\Hom_{\sfT}(t,\CMcoprod_{\lambda \in \Lambda}x_\lambda) \cong \bigoplus_{\lambda \in \Lambda} \Hom_{\sfT}(t,x_{\lambda}).$
	We say $\sfT$ is \emph{compactly generated} if there is a set of compact objects $\mcG$ such that an object $t \in \sfT$ is zero if and only if $\Hom_{\sfT}(g,\Sigma^i t)=0$ for all $g \in \mcG$ and all $i \in \mathbb{Z}$. 
	We denote by $\sfT^c$ the full subcategory of compact objects in $\sfT.$ Note that $\sfT^c$ is an essentially small thick subcategory of $\sfT$.
\end{defn}

\begin{defn}
	Assume that $\sfT$ is closed symmetric monoidal. Then there is an associated internal hom functor denoted $\hom(-,-)$. The \emph{dual} of an object $k$ is given by $k^{\vee}:=\hom(k,\unit)$. An object $k$  is \emph{rigid} if for all other objects $t$ we have that the natural evaluation map $k^{\vee}\otimes t \to \hom(k,t)$ is an isomorphism. For a rigid object $k$ we have $k\cong (k^{\vee})^{\vee}$.
\end{defn}

We can now combine the structure of a tensor triangulated category with the smallness conditions of compactness and rigidity to obtain a good structure to analyse.

\begin{defn}
	A \emph{rigidly-compactly generated tensor triangulated category} is a triple $(\sfT,\otimes,\unit)$ where $\sfT$ is a compactly generated tensor triangulated category, and $(\otimes,\unit)$ is a symmetric monoidal structure on $\sfT$ such that the tensor product $\otimes$ is a coproduct preserving exact functor in each variable, and the compact objects $\sfT^c$ form a rigid tensor subcategory. In particular we require $\unit$ to be compact. We will refer to such a category $\sfT$ as a \emph{big tt-category}.
\end{defn}

From now on we will suppress notation and take $\sfT$ to be a big tt-category and denote by $\sfT^c$ the full subcategory of compact objects.

\begin{defn}
	An object $u$ is said to be \emph{invertible} if there exists an object $v$ such that $u\otimes v\cong \unit$.
\end{defn} 

\begin{defn}
	\label{defgradedhom}
	For any two objects $a,b$ in a big tt-category $\sfT$, given an invertible object $u$ we define the \emph{twisted homomorphism group}
	\[\Hom^{\bullet}_{\sfT}(a,b):=\bigoplus_{i\in\ZZ}\Hom_{\sfT}(a,u^{\otimes i} \otimes b).\]
	In the special case \(u=\Sigma\unit\) we define the \emph{graded homomorphism group} by
	\[\Hom^{\ast}_\sfT(a,b):=\bigoplus_{i\in\ZZ}\Hom_{\sfT}(a,\Sigma^i b).\]
\end{defn}

\begin{rem}
	For completeness we should include the invertible object $u$ in the notation, but we suppress it here. We rarely deal with more than one invertible object at a time in this setup. Also note that in the case $u=\unit$, the twisted homomorphism group is given by \[\Hom^{\bullet}_{\sfT}(a,b)=\bigoplus_{i\in\ZZ}\Hom_{\sfT}(a,\unit^{\otimes i} \otimes b)=\bigoplus_{i\in\ZZ}\Hom_{\sfT}(a,b),\]
	which is a coproduct of countably many copies of the usual homomorphism group.  
\end{rem}

There is a useful connection between the twisted homomorphism group and the generators of a category.

\begin{lem}
	\label{lemthickzero}
	If $\sfA=\thick(g)$ is a triangulated category and for all $j\in\ZZ$ we have $\Hom_{\sfA}(g,\Sigma^j x)=0$ then $x=0.$
\end{lem}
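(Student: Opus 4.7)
The plan is to identify the thick subcategory consisting of objects that see no map into any shift of $x$, and show it contains everything.

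More precisely, I would define
\[
\mcS := \{ y \in \sfA \ \vert \ \Hom_\sfA(y, \Sigma^j x) = 0 \text{ for all } j \in \ZZ \}.
\]
By hypothesis $g \in \mcS$. The first step is to verify that $\mcS$ is a thick subcategory of $\sfA$. Closure under suspension is immediate from the identification $\Hom_\sfA(\Sigma y, \Sigma^j x) \cong \Hom_\sfA(y, \Sigma^{j-1} x)$. Closure under cones follows from the long exact sequence obtained by applying the cohomological functor $\Hom_\sfA(-, \Sigma^j x)$ to a triangle $y \to z \to w \to \Sigma y$ with $y,z \in \mcS$: the outer terms vanish, forcing $\Hom_\sfA(w, \Sigma^j x) = 0$. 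Closure under direct summands is equally formal, since $\Hom_\sfA(y, \Sigma^j x)$ is a retract of $\Hom_\sfA(y \oplus y', \Sigma^j x)$.

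Having shown $\mcS$ is a thick subcategory containing $g$, the minimality of $\thick(g)$ gives $\sfA = \thick(g) \subseteq \mcS$. In particular $x \in \mcS$, so $\Hom_\sfA(x, x) = 0$. But then $\id_x = 0$, which in any additive category implies $x = 0$.

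I do not anticipate a serious obstacle: the argument is a standard ``devissage on a thick generator'' and the only minor thing to keep in mind is that we need vanishing for \emph{all} integer shifts $j$ (not just $j=0$) in order to close up under suspension. The hypothesis supplies exactly this.
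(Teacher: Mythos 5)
Your proof is correct and is essentially the paper's own argument: you form the same left-orthogonal subcategory $\{y \mid \Hom_\sfA(y,\Sigma^j x)=0 \text{ for all } j\}$, check it is thick (the paper merely asserts this), conclude it contains $\thick(g)=\sfA$, and deduce $\Hom_\sfA(x,x)=0$, hence $x=0$. No issues.
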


\begin{proof}
	Consider the collection 
	\[\prescript{\perp}{}{x}=\{y\in\sfA\ \vert \ \forall j \in \ZZ:\Hom_{\sfA}(y,\Sigma^j x)=0\}.\] 
	Observe that \(\prescript{\perp}{}{x}\) is thick. 
	By assumption \(g\in \prescript{\perp}{}{x}\) and so \(\prescript{\perp}{}{x}=\sfA.\) 
	Therefore \(\Hom_\sfA(x,x)=0.\)
\end{proof}

\begin{rem}
	The above lemma holds when $\sfA=\loc(g)$ for a compact object $g$.
\end{rem}

We have seen the definition of thick and localising subcategories, but the definitions given have no mention of any tensor product. We capture the monoidal structure by considering the \emph{ideals} of our big tt-category. 

\begin{defn}\cite[Def 1.2]{BaSpec}
	A \emph{thick tensor-ideal} $\sfA$ of $\sfT$ is a thick subcategory such that for all $a \in \sfA$ and $t \in \sfT$ the tensor product $a\otimes t$ also belongs to $\sfA$. Just as with thick subcategories, we will denote by $\thick^\otimes(X)$ the smallest thick tensor-ideal containing $X$.
\end{defn}

Following the similarity with ideals in the usual theory of commutative rings we will also be interested in those ideals which have the additional property of being \emph{prime}.

\begin{defn}\cite[Def 2.1]{BaSpec}
	A \emph{prime ideal} of $\sfT$ is a proper thick tensor-ideal $\mcP \subsetneq \sfT$ such that if $a\otimes b \in \mcP$ then $a \in \mcP$ or $b \in \mcP.$
\end{defn}

The definitions so far suggest that one can think of a big tt-category as some kind of bizarre commutative ring. The work of Stevenson in \cite{StevensonActions} takes this comparison further by introducing the action of a big tt-category on another triangulated category. In the comparison with rings, this is the introduction of modules.

\begin{rem}
	From now on we will assume that the suspension functor of the big tt-category $\sfT$ is compatible with the tensor product. This is guaranteed when $\sfT$ can be realised as the homotopy category of a finitely presentable stable $\EE_2$-monoidal $\infty$-category.
\end{rem}

\begin{defn}\cite[Def 3.2]{StevensonActions}
	Let $\sfT$ be a big tt-category and $\sfK$ a triangulated category. A \emph{left action} of $\sfT$ on $\sfK$ is a functor 
	\[\ast:\sfT \times \sfK \to \sfK\]
	which is exact in each variable, together with natural isomorphisms 
	\[a_{X,Y,A}:(X\otimes Y)\ast A \xrightarrow{\sim}X\ast(Y\ast A)\]
	and
	\[l_A:\unit \ast A \xrightarrow{\sim}A\]
	for all $X,Y \in \sfT$, $A \in \sfK$, compatible with the biexactness of $(-)\ast(-)$ and satisfying the following conditions:
	\begin{enumerate}
		\item The associator $a$ satisfies the pentagon condition which asserts that the following diagram commutes for all $X,Y,Z\in\sfT$ and $A \in \sfK$ 
		\[
		\begin{tikzcd}
			& X\ast(Y\ast(Z\ast A)) &                                  \\
			X\ast((Y\otimes Z)\ast A) \arrow[ru, "X\ast a_{Y,Z,A}"] &   & (X\otimes Y)\ast(Z\ast A) \arrow[lu, "a_{X,Y,Z\ast A}"']                \\
			(X\otimes(Y\otimes Z))\ast A \arrow[u, "a_{X,Y\otimes Z,A}"]  &   & ((X\otimes Y)\otimes Z)\ast A \arrow[ll, ""] \arrow[u, "a_{X\otimes Y,Z,A}"']
		\end{tikzcd}\]
		where the bottom arrow is the associator of $\sfT$.
		\item The unitor $l$ makes the following squares commute for every $X \in \sfT$ and $A \in \sfK$
		\[
		\begin{tikzcd}
			X\ast(\unit \ast A) \arrow[r, "X\ast l_A"]           & X\ast A \arrow[d, "1_{X\ast A}"] & \unit\ast(X\ast A) \arrow[r, "l_{X\ast A}"]           & X\ast A \arrow[d, "1_{X\ast A}"] \\
			(X\otimes \unit)\ast A \arrow[u, "a_{X,\unit,A}"] \arrow[r] & X\ast A                & (\unit \otimes X)\ast A \arrow[r] \arrow[u, "a_{\unit,X,A}"] & X\ast A               
		\end{tikzcd}\]
		where the bottom arrows are the right and left unitors of $\sfT$.
		\item For every $A \in \sfK$ and $r,s \in\ZZ$ the diagram
		\[
		\begin{tikzcd}
			\Sigma^r \unit \ast \Sigma^s A \arrow[r, "\sim"] \arrow[d, "\sim"] & \Sigma^{r+s}A \arrow[d, "(-1)^{rs}"] \\
			\Sigma^r(\unit \ast \Sigma^s A) \arrow[r, "\sim"]                & \Sigma^{r+s}A               
		\end{tikzcd}\]
		is commutative, where the left vertical map comes from exactness in the first varianble of the action, the bottom horzontal map is the unitor, and the top map is given by the composite
		\[\Sigma^r\unit \ast \Sigma^s A \to \Sigma^s(\Sigma^r\unit \ast A)\to \Sigma^{r+s}(\unit \ast A) \xrightarrow{l}\Sigma^{r+s}A\]
		whose first two maps use exactness in both variables of the action. 
		\item The functor $\ast$ distributes over coproducts whenever they exist. That is, for families $\{X_i\}_{i\in I}$ in $\sfT$ and $\{A_j\}_{j\in J}$ in $\sfK$, and $X$ in $\sfT$, $A$ in $\sfK$ the canonical maps 
		\[\CMcoprod_i (X_i \ast A)\xrightarrow{\sim} (\CMcoprod_i X_i) \ast A\]
		and
		\[\CMcoprod_j (X\ast A_j)\xrightarrow{\sim} X\ast (\CMcoprod_j A_j)\]
		are isomorphisms whenever the coproducts concerned exist. 
	\end{enumerate}
	
	If $\sfT$ acts on $\sfK$ we will say that $\sfK$ is a $\sfT$-module. 
\end{defn}

\begin{defn}
	Let $\sfL\subseteq\sfK$ be a localising (thick) subcategory. We say $\sfL$ is a localising $\sfT$\emph{-submodule} of $\sfK$ if the functor \[\sfT\times \sfL \xrightarrow{\ast}\sfK\] factors via $\sfL$. That is, $\sfL$ is closed under the action of $\sfT$. Given a collection of objects $\mcA$ in $\sfK$, we denote by $\loc^\ast(\mcA)$ (resp. $\thick^\ast(\mcA)$) the smallest localising (resp. thick) submodule containing $\mcA$.
\end{defn}

We have the following useful lemma.

\begin{lem}\cite[3.13]{StevensonActions}
	If $\sfT$ is generated as a localising subcategory by the tensor unit $\unit$, then every localising subcategory of $\sfK$ is a $\sfT$-submodule.
\end{lem}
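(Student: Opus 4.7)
The plan is to exploit the hypothesis that $\loc(\unit)=\sfT$ via a standard ``test subcategory'' argument. Fix a localising subcategory $\sfL\subseteq\sfK$ and an object $A\in\sfL$, and consider the full subcategory
\[
\sfL_A:=\{X\in\sfT \ \vert\ X\ast A \in \sfL\}\subseteq\sfT.
\]
The goal is to show $\sfL_A=\sfT$; since $\sfT=\loc(\unit)$, it suffices to verify that $\sfL_A$ is a localising subcategory of $\sfT$ containing $\unit$. Once this is established for every $A\in\sfL$, we conclude $\sfT\ast \sfL\subseteq\sfL$, i.e.\ $\sfL$ is a $\sfT$-submodule.

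The verification of closure properties of $\sfL_A$ is the bulk of the work, but each piece is essentially formal from the axioms of an action. First, the unitor $l_A:\unit\ast A\xrightarrow{\sim}A$ shows $\unit\in\sfL_A$ since $A\in\sfL$. Next, exactness of $(-)\ast A$ in the first variable turns a triangle $X\to Y\to Z\to\Sigma X$ in $\sfT$ into a triangle $X\ast A\to Y\ast A\to Z\ast A\to \Sigma(X\ast A)$ in $\sfK$, so closure of $\sfL$ under cones and suspensions transfers to $\sfL_A$. Finally, the distributivity axiom gives a canonical isomorphism
\[
\bigl(\CMcoprod_i X_i\bigr)\ast A \ \xrightarrow{\sim}\ \CMcoprod_i (X_i\ast A),
\]
and since $\sfL$ is closed under arbitrary coproducts in $\sfK$, closure of $\sfL_A$ under coproducts in $\sfT$ follows. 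Thus $\sfL_A$ is localising, and the minimality of $\loc(\unit)$ forces $\sfL_A=\sfT$.

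There is no serious obstacle here, but the one point that requires a moment of care is the coproduct closure: one must ensure that the coproducts used to generate $\loc(\unit)$ inside $\sfT$ are respected by the action, which is precisely what the fourth axiom of a left action guarantees. Given that, the argument is a direct devissage along the localising subcategory generated by $\unit$, and applying it uniformly for every $A\in\sfL$ yields the statement.
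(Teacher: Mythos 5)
Your argument is correct: the paper gives no proof of this lemma, citing \cite[3.13]{StevensonActions} instead, and your ``test subcategory'' devissage is exactly the standard argument used there. The closure checks via the unitor, exactness of $(-)\ast A$ in the first variable, and the coproduct-distributivity axiom are precisely what is needed, so nothing is missing.
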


\section{The Balmer Spectrum}
We now present the Balmer spectrum of a tensor triangulated category and the first notion  of support as given in \cite{BaSpec}. Restricting to the compact objects, this will let us view the collection of prime ideals of a tensor triangulated category as a topological space, equipped with a universal support theory satisfying many desirable properties.

\begin{defn}
	Let $\sfT$ be a big tt-category, with compacts $\sfT^c$. The \emph{Balmer spectrum} of $\sfT^c$ is given by
	\[\Spc(\sfT^c)=\{\mcP \ \vert \ \mcP\text{ prime ideal of }\sfT^c\}.\]
	For all $a \in \sfT^c$ we define the open subsets $U(a)=\{\mcP \in \Spc(\sfT^c)\ \vert \ a \in \mcP\}.$ This forms a basis for the topology on $\Spc(\sfT^c)$.
\end{defn}

\begin{defn}
	For an object $t \in \sfT^c$ the (small) tt-support $\supp_{\sfT^c}t$ is defined as
	\[\supp_{\sfT^c}t = \{\mcP \in \Spc(\sfT^c)\ \vert \ t \not\in \mcP\}.\]
	By definition $\supp_{\sfT^c}t$ is the complement of the basic open subset $U(t)$.
	Given a subset of objects \(\mcJ\subset \sfT^c\) we define the support of the subset as
	\[\supp_{\sfT^c}(\mcJ):=\bigcup_{j \in \mcJ}\supp_{\sfT^c}(j).\] 
\end{defn}

This notion of support has many desirable properties and is in fact universal amongst such constructions. 

\begin{thm}\emph{Universal property of the spectrum \cite[3.2]{BaSpec}}
	We have
	\begin{enumerate}
		\item $\supp_{\sfT^c}(0)=\varnothing$ and $\supp_{\sfT^c}(\unit) = \Spc(\sfT^c)$.
		\item $\supp_{\sfT^c}(a\oplus b)=\supp_{\sfT^c}(a) \cup \supp_{\sfT^c}(b)$. 
		\item $\supp_{\sfT^c}(\Sigma a)=\supp_{\sfT^c}(a)$ where $\Sigma$ is the suspension functor for $\sfT$. 
		\item $\supp_{\sfT^c}(a) \subseteq \supp_{\sfT^c}(b)\cup\supp_{\sfT^c}(c)$ for any exact triangle $a\to b\to c \to \Sigma a$.
		\item $\supp_{\sfT^c}(a\otimes b)=\supp_{\sfT^c}(a)\cap \supp_{\sfT^c}(b)$.\\
		Moreover, for any pair $(X,\sigma)$, where $X$ is a topological space and $\sigma$ an assignment of closed subsets $\sigma(t) \subseteq X$ to objects $t \in \sfT^c$ which satisfy properties (a)-(e) above, there exists a unique continuous map $f: X \to \Spc(\sfT^c)$ such that $\sigma(t)=f^{-1}(\supp_{\sfT^c}(t))$.
	\end{enumerate}
	
\end{thm}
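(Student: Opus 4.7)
The plan has two parts: first verify that $\supp_{\sfT^c}$ satisfies (a)--(e), which amounts to translating each property into a closure axiom for prime ideals, and second construct the classifying map for an arbitrary pair $(X,\sigma)$ by taking the natural candidate $f(x)=\{t\in\sfT^c : x\notin\sigma(t)\}$ and verifying everything that needs checking.

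For (a)--(e) I would work through each item in turn. Every thick subcategory contains $0$ and every prime ideal is proper so cannot contain $\unit$, giving (a). Properties (b) and (c) reduce to the fact that a thick subcategory is closed under direct summands and suspensions, respectively. For (d), given a triangle $a\to b\to c\to\Sigma a$, rotation and the 2-out-of-3 property of triangulated subcategories ensure that if $b,c\in\mcP$ then $a\in\mcP$; the contrapositive is exactly $\supp_{\sfT^c}(a)\subseteq\supp_{\sfT^c}(b)\cup\supp_{\sfT^c}(c)$. For (e), the tensor-ideal property gives $\supp_{\sfT^c}(a\otimes b)\subseteq\supp_{\sfT^c}(a)\cap\supp_{\sfT^c}(b)$, while primality provides the reverse containment: if neither $a$ nor $b$ lies in $\mcP$ then $a\otimes b\notin\mcP$.

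For the universal property, given $(X,\sigma)$ satisfying (a)--(e), I would set $f(x):=\{t\in\sfT^c : x\notin\sigma(t)\}$ for each $x\in X$. Each of (a)--(e) translates cleanly into a condition on $f(x)$: (a) says $\unit\notin f(x)$, so $f(x)$ is proper; (b), (c), (d) guarantee $f(x)$ is closed under direct summands, suspensions, and cones, hence thick; one inclusion of (e) yields the tensor-ideal condition, while the other yields primality. Hence $f(x)\in\Spc(\sfT^c)$. Continuity and the pullback identity both follow from the single observation
\[
f^{-1}(U(t))\;=\;\{x\in X : t\in f(x)\}\;=\;X\setminus\sigma(t),
\]
so $U(t)$ pulls back to an open set and $\supp_{\sfT^c}(t)=\Spc(\sfT^c)\setminus U(t)$ pulls back to $\sigma(t)$. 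Uniqueness is forced: any continuous $g\colon X\to\Spc(\sfT^c)$ satisfying $g^{-1}(\supp_{\sfT^c}(t))=\sigma(t)$ must obey $t\in g(x)\iff x\notin\sigma(t)$ for every $t\in\sfT^c$, which determines $g(x)$ set-theoretically as $f(x)$.

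The main obstacle is really property (d), in two guises: when checking it for $\supp_{\sfT^c}$ it requires the rotation axiom to convert closure under cones into the triangle support containment, and symmetrically it is (d) that furnishes closure of the candidate prime ideal $f(x)$ under cones. Every other step is a routine translation between the axioms on $\sigma$ and the closure properties of the associated prime.
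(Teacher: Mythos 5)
Your argument is correct: the paper itself gives no proof of this statement, citing it directly from Balmer's work, and your verification of (a)--(e) together with the construction $f(x)=\{t\in\sfT^c : x\notin\sigma(t)\}$, the check that this is a prime ideal, and the basis computation $f^{-1}(U(t))=X\setminus\sigma(t)$ is exactly the standard argument of the cited source. The only detail worth adding is the routine observation (via (a) and (d) applied to triangles of the form $a\xrightarrow{\sim} b\to 0\to\Sigma a$) that $\sigma$ is invariant under isomorphism, so that $f(x)$ is indeed a full, isomorphism-closed subcategory.
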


As noted, the issue with this notion of support is that it is only defined for compact objects, and one cannot simply extend the definition of the spectrum to include non-compact objects without losing the useful properties of the spectrum. A solution to this problem was defined by Balmer-Favi in \cite{BaRickard}, which was then extended by Stevenson in \cite{StevensonActions} to the case of actions. As the techniques used in the definition will be used in various future sections, we will present them in some detail. We focus on the three main components:

\begin{itemize}
	\item Localisations with respect to the topology on $\Spc(\sfT^c).$
	\item Tensor idempotents associated to points in the spectrum.
	\item Support defined in terms of tensor idempotents.
\end{itemize}

We begin with the notion of a localisation sequence.

\begin{defn}
	A \emph{localisation sequence} is a diagram
	\[
	\begin{tikzcd}
		\sfR \arrow[rr, "i_\ast", shift left=3] \arrow[phantom,rr, "\perp" description] &  & \sfT \arrow[ll, "i^!", shift left=3] \arrow[rr, "j^\ast", shift left=3] \arrow[phantom,rr, "\perp" description] &  & \sfS \arrow[ll, "j_\ast", shift left=3]
	\end{tikzcd}
	\]
	where both \(i_\ast\) and \(j_\ast\) are fully faithful, and we have equalities \((i_\ast \sfR)^\perp = j_\ast \sfS\) and \(\prescript{\perp}{}{(j_\ast \sfS)}=i_\ast \sfR\) where
	\[(i_\ast \sfR)^\perp = \{t \in \sfT \ \vert \ \Hom_\sfT (i_\ast r, t)=0 \text{ for all }r \in \sfR\}\]
	and
	\[\prescript{\perp}{}{(j_\ast \sfS)}=\{t \in \sfT \ \vert \ \Hom_\sfT(t,j_\ast s)=0 \text{ for all } s \in \sfS\}.\]
\end{defn}

\begin{lem}\cite[5.5.1]{KrLoc}
	Given a localisation sequence as above, the functor $i^!$ is coproduct preserving if and only if $j_\ast$ is coproduct preserving.
\end{lem}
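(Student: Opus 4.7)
The plan is to exploit the localisation triangle
\[i_\ast i^! t \to t \to j_\ast j^\ast t \to \Sigma i_\ast i^! t\]
that is available for every $t\in\sfT$: the first map is the counit of $i_\ast \dashv i^!$, the second is the unit of $j^\ast \dashv j_\ast$, and the orthogonality relations in the definition of a localisation sequence guarantee that these fit into a distinguished triangle. The left adjoints $i_\ast$ and $j^\ast$ automatically preserve coproducts, and our task concerns the two right adjoints. I would first record the identity $i^! j_\ast = 0$: by orthogonality $\Hom_\sfT(i_\ast r, j_\ast s)=0$ for all $r \in \sfR$ and $s \in \sfS$, so by adjunction $\Hom_\sfR(r, i^! j_\ast s)=0$ for all $r$, forcing $i^! j_\ast s=0$.

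Suppose first that $i^!$ preserves coproducts, and fix a family $\{s_\lambda\}$ in $\sfS$. Then
\[i^!\bigl(\textstyle\coprod j_\ast s_\lambda\bigr)\cong \textstyle\coprod i^! j_\ast s_\lambda = 0.\]
The localisation triangle for $\coprod j_\ast s_\lambda$ then collapses to an isomorphism
\[\textstyle\coprod j_\ast s_\lambda \xrightarrow{\sim} j_\ast j^\ast\bigl(\coprod j_\ast s_\lambda\bigr) \cong j_\ast\bigl(\coprod j^\ast j_\ast s_\lambda\bigr) \cong j_\ast\bigl(\coprod s_\lambda\bigr),\]
using that $j^\ast$ preserves coproducts and $j^\ast j_\ast\cong\id_\sfS$.

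Conversely, suppose $j_\ast$ preserves coproducts, and let $\{t_\lambda\}$ be a family in $\sfT$. Taking the coproduct of the localisation triangles for each $t_\lambda$ (coproducts of triangles are triangles) yields
\[\textstyle\coprod i_\ast i^! t_\lambda \to \coprod t_\lambda \to \coprod j_\ast j^\ast t_\lambda \to.\]
Because $j_\ast$ is assumed to preserve coproducts and $j^\ast$ does so automatically, the third term identifies with $j_\ast j^\ast \coprod t_\lambda$; naturality of the localisation unit identifies the middle arrow with the canonical map $\coprod t_\lambda \to j_\ast j^\ast \coprod t_\lambda$. Comparing this with the localisation triangle for $\coprod t_\lambda$ via the fill-in axiom produces an isomorphism $\coprod i_\ast i^! t_\lambda \cong i_\ast i^! \coprod t_\lambda$. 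Since $i_\ast$ preserves coproducts (as a left adjoint) and is fully faithful, this descends to the desired isomorphism $\coprod i^! t_\lambda \cong i^! \coprod t_\lambda$.

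The main (though mild) obstacle is the bookkeeping in the second direction: identifying the middle map of the coproduct triangle with the canonical localisation map so that the triangulated fill-in axiom yields an isomorphism on the first terms. This reduces to naturality of the unit $t \to j_\ast j^\ast t$, combined with the coproduct-preservation properties already in hand for $i_\ast$, $j^\ast$, and (by assumption) $j_\ast$.
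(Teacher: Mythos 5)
The paper offers no argument of its own here -- it simply cites \cite[5.5.1]{KrLoc} -- so there is nothing to compare against except the standard proof, and your argument is exactly that standard proof: it is correct in both directions, using $i^!j_\ast=0$ together with the localisation triangle $i_\ast i^! t \to t \to j_\ast j^\ast t \to \Sigma i_\ast i^! t$. The only point worth tightening is the one you flag yourself: ``preserves coproducts'' means the \emph{canonical} comparison map is invertible, and a fill-in produced by (TR3) is a priori neither canonical nor unique. The clean fix is to note that fill-ins between localisation triangles are unique (since $\Hom_\sfT(\Sigma i_\ast i^! t, j_\ast j^\ast t')=0$), so each inclusion $t_\lambda \to \coprod t_\lambda$ induces a genuine morphism of localisation triangles; taking coproducts gives the morphism of triangles $(\mathrm{can}',\id,\mathrm{can})$ whose outer vertical maps are the canonical comparisons, and the five lemma then shows $\mathrm{can}'$ is invertible, after which full faithfulness of $i_\ast$ (which reflects isomorphisms) yields that the canonical map $\coprod i^! t_\lambda \to i^!\coprod t_\lambda$ is an isomorphism. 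The same remark disposes of the first direction, where your composite is easily checked (naturality of the unit plus a triangle identity) to coincide with the canonical map.
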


\begin{defn}
	A localisation sequence is called \emph{smashing} if $i^!$ (or equivalently $j_\ast$) preserves coproducts. In such a sequence we call $\sfR$ a \emph{smashing subcategory} of $\sfT$. If $\sfR$ is a tensor ideal then we say it is a \emph{smashing tensor-ideal}.
\end{defn}

In particular we will use the following theorem, which is the work of Miller \cite{Miller} (for part 1) and Neeman \cite{NeeCat} (for parts 2 and 3). This particular form of the theorem is from \cite[4.1]{BaRickard}   

\begin{thm}\emph{[Miller, Neeman]}
	\label{millerneeman}
	Let $\sfT$ be a big tt-category and let $\sfC$ be a thick \(\otimes\)-ideal of $\sfT^c$. Then we have
	
	\[
	\begin{tikzcd}
		\sfC \arrow[rr, hook] \arrow[d, hook]                              &  & \sfT^c \arrow[rr] \arrow[d, hook]                                                       &  & \sfT^c/\sfC \arrow[d, hook]               \\
		\loc(\sfC) \arrow[rr, shift left=3] \arrow[phantom,rr, "\perp" description] &  & \sfT \arrow[rr, shift left=3] \arrow[ll, shift left=3] \arrow[phantom,rr, "\bot" description] &  & \sfT/\loc(\sfC) \arrow[ll, shift left=3]
	\end{tikzcd}
	\]
	
	\begin{enumerate}
		\item $\loc(\sfC)$ is a smashing tensor ideal, the bottom row of the diagram is a smashing localisation sequence and $\loc(\sfC)^c = \loc(\sfC)\cap \sfT^c = \sfC.$
		\item $\sfT/\loc(\sfC)$ has small hom-sets and is a compactly generated tensor-triangulated category.
		\item $\sfT^c/\sfC$ fully faithfully embeds into the compact objects of $\sfT/\loc(\sfC)$ and the additive closure of $\sfT^c/\sfC$  is exactly $(\sfT/\loc(\sfC))^c.$ That is, if $t$ is a compact object in $\sfT/\loc(\sfC)$ then $t$ is a summand of an object in $\sfT^c/\sfC$. 
	\end{enumerate}
\end{thm}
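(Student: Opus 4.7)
The plan is to assemble three standard ingredients: a localising-subcategory argument for the tensor-ideal property, Bousfield--Neeman localisation theory for the smashing sequence, and Neeman's calculation of compacts in a Verdier quotient for the final identification of compacts.

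First I would show that $\loc(\sfC)$ is a tensor ideal in $\sfT$. The collection $\{t \in \sfT \mid t \otimes c \in \loc(\sfC) \text{ for every } c \in \sfC\}$ is a localising subcategory, and since $\sfC$ is a tensor ideal of $\sfT^c$ it contains $\sfT^c$; compact generation (i.e.\ $\sfT = \loc(\sfT^c)$) then forces it to be all of $\sfT$. A second localising-subcategory argument, now varying the second variable across $\loc(\sfC)$, promotes this to the full tensor-ideal property. To produce the localisation sequence I would invoke Brown representability: since $\loc(\sfC)$ is generated by a set, the inclusion $i_\ast$ admits a right adjoint $i^!$, and likewise the Verdier quotient $j^\ast \colon \sfT \to \sfT/\loc(\sfC)$ admits a right adjoint $j_\ast$. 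The smashing property follows from the fact that $\loc(\sfC)$ is generated by objects which are compact in $\sfT$: coproduct preservation of $i^!$ can be detected on these generators, and the lemma recalled above converts this to coproduct preservation of $j_\ast$. For the identification $\loc(\sfC)^c = \loc(\sfC) \cap \sfT^c = \sfC$, I would appeal to Neeman's theorem that the compacts of a localising subcategory generated by ambient-compacts coincide with the thick closure of the generators; since $\sfC$ is already thick in $\sfT^c$, the collapse is immediate.

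For statement (2), I would verify that each $j^\ast g$ with $g \in \sfT^c$ remains compact in the quotient, which is exactly where the smashing property pays off: $j_\ast$ preserves coproducts, hence $j^\ast$ preserves compactness. These images detect zero objects by the orthogonality built into the localisation sequence, giving compact generation; small hom-sets follow from the standard Bousfield localisation description. The tensor structure descends to $\sfT/\loc(\sfC)$ precisely because the kernel $\loc(\sfC)$ is a tensor ideal, yielding the claimed tt-structure. For statement (3) I would apply the Verdier-quotient calculus of fractions: the induced functor $\sfT^c/\sfC \to (\sfT/\loc(\sfC))^c$ is fully faithful by the standard hom-set formula combined with compactness of the source, and every compact in the quotient is a summand of an object in the image (equivalently, the idempotent completion of $\sfT^c/\sfC$ realises all of $(\sfT/\loc(\sfC))^c$). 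The main technical obstacle I anticipate is coordinating coproduct preservation coherently across both adjunctions; this is precisely what the hypothesis that $\sfC \subseteq \sfT^c$ buys us, and it is the technical heart of the Miller--Neeman argument, while the remainder is tt-categorical bookkeeping.
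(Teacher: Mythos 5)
Your proposal cannot be compared against an in-paper argument, because the paper does not prove this theorem: it is quoted in the form given by Balmer--Favi (their Theorem 4.1) and attributed to Miller (part 1) and Neeman (parts 2 and 3), with no proof supplied. On its own terms your outline follows exactly the standard route from those sources and is essentially sound: the two localising-subcategory arguments for the tensor-ideal property of $\loc(\sfC)$, Brown representability for the adjoints $i^!$ and $j_\ast$, the smashing property via closure of the local objects under coproducts (using that the objects of $\sfC$ are compact in $\sfT$, so the vanishing conditions $\Hom_\sfT(c,-)=0$ are preserved by coproducts), and preservation of compactness by $j^\ast$ from coproduct preservation of $j_\ast$ are all correct, as is the identification $\loc(\sfC)\cap\sfT^c=\loc(\sfC)^c=\sfC$ once one knows the compacts of $\loc(\sfC)$ are $\thick(\sfC)=\sfC$. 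The one caveat: the genuinely hard content of parts (1) and (3) --- that the compacts of a localising subcategory generated by ambient compacts are the thick closure of the generators, and that $\sfT^c/\sfC\to(\sfT/\loc(\sfC))^c$ is fully faithful with image dense up to summands --- is Neeman's localisation theorem, which you invoke rather than prove; in particular the ``standard hom-set formula'' for the Verdier quotient does not by itself yield full faithfulness, since one must compare the filtered colimit computing $\Hom_{\sfT^c/\sfC}(a,b)$ with $\Hom_\sfT(a,j_\ast j^\ast b)$, typically by expressing the acyclisation of $b$ as a homotopy colimit of objects built from $\sfC$ and using compactness of $a$. Treating that theorem as a citable black box is entirely consistent with how the paper itself handles the statement.
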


We can choose $\sfC$ in the theorem to connect it to the topology on $\Spc(\sfT^c)$ via Thomason subsets.

\begin{defn}
	A subset $\mcV \subset \Spc(\sfT^c)$ is \emph{Thomason} if $\mcV$ is a (possibly infinite) union of closed subsets with quasi-compact open complements. 
\end{defn}

Given such a Thomason subset $\mcV$ define the associated thick-tensor ideal $\tau(\mcV)=\{t \in \Spc(\sfT^c)\ \vert \ \supp_{\sfT^c}t \subseteq \mcV\}.$ Letting $\sfC=\tau(\mcV)$ and $\Gamma_\mcV\sfT=\loc(\sfC)$ the above theorem gives us a smashing localisation sequence   

\[
\begin{tikzcd}
	\Gamma_\mcV\sfT \arrow[rr, "", shift left=3] \arrow[phantom,rr, "\perp" description] &  & \sfT \arrow[ll, "", shift left=3] \arrow[rr, "", shift left=3] \arrow[phantom,rr, "\perp" description] &  & L_\mcV\sfT \arrow[ll, "", shift left=3]
\end{tikzcd}
\]

with corresponding acyclisation and localisation functors $\Gamma_\mcV$ and $L_\mcV$ respectively.

With such a localisation we can now move onto the definition of tensor idempotents associated to points. 

\begin{defn}
	We say a point $x \in\Spc(\sfT^c)$ is \emph{visible} if there exist Thomason subsets $\mcV$ and $\mcW$ of $\Spc \sfT^c$ such that 
	\[\mcV\setminus (\mcV \cap \mcW)=\{ x \}.\]
	We denote the collection of all such points by $\Vis(\sfT^c).$ 
	Given such Thomason subsets we define a tensor idempotent 
	\[\Gamma_x \unit = \Gamma_\mcV \unit \otimes L_\mcW \unit.
	\]
	By \cite[Corollary 7.5]{BaRickard} any such pairs $(\mcV_1, \mcW)$ and $(\mcV_2, \mcW_2)$ of Thomason subsets will define isomorphic tensor idempotents.
	We define the following particular case:
	For each point $x \in\Spc(\sfT^c)$ define subsets of the spectrum $\mcV(x)=\overline{\{x\}}$ and $\mcZ(x)=\{y \in \Spc(\sfT^c)\ \vert x \not\in \mcV(y)\}.$ If $\mcV(x)$ is a Thomason subset then we define a tensor idempotent
	\[\Gamma_x \unit = \Gamma_{\mcV(x)}\unit \otimes L_{\mcZ(x)}\unit.\]
\end{defn}

We can now introduce the definition of the support relative to an action.

\begin{defn}
	Let $\sfT$ act on $\sfK$. The for $A \in \sfK$ we define the support of $A$ to be the set
	\[\supp_{(\sfT,\ast)}A=\{x \in \Vis(\sfT^c)\ \vert \ \Gamma_x A\neq 0\}.\]
	When the action in question is clear we will omit the subscript from the notation.
\end{defn}

The below properties demonstrate the good behaviour of this support:

\begin{prop}\cite[5.7]{StevensonActions}
	The support assignment $\supp_{(\sfT,\ast)}$ satisfies the following properties:
	\begin{itemize}
		\item given a triangle
		\[A\to B\to C \to \Sigma A\] 
		in $\sfK$ we have $\supp B \subseteq \supp A \cup \supp C;$
		\item for any $A \in \sfK$ and $i \in \ZZ$
		\[\supp A =\supp \Sigma^i A;\]
		\item given a set-indexed family $\{A_\lambda\}_{\lambda \in \Lambda}$ of objects of $\sfK$ there is an equality 
		\[\supp \CMcoprod_\lambda A_\lambda = \bigcup_\lambda \supp A_\lambda\]
		whenever the coproduct on the left exists;
		\item the support satisfies the separation axiom. That is, for every specialisation closed subset $\mcV\subseteq \Vis \sfT^c$ and every object $A$ of $\sfK$ 
		\begin{align*}
			\supp \Gamma_\mcV \unit \ast A &= (\supp A)\cap \mcV\\
			\supp L_\mcV \unit \ast A &= (\supp A)\cap (\Vis \sfT^c \setminus \mcV). 
		\end{align*}
	\end{itemize}
\end{prop}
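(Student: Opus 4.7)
The plan is to verify each of the four bullet points in turn, in every case by transferring the question about $\supp$ to a question about the vanishing of objects of the form $\Gamma_x\unit \ast A$, and then using the exactness of the action together with the multiplicative behaviour of the tensor idempotents.

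For the triangle property, apply the functor $\Gamma_x \unit \ast (-)$ to the triangle $A \to B \to C \to \Sigma A$. Since the action is exact in the second variable, we obtain an exact triangle $\Gamma_x A \to \Gamma_x B \to \Gamma_x C \to \Sigma \Gamma_x A$ in $\sfK$; if $x \notin \supp A \cup \supp C$ then the outer terms vanish and hence so does $\Gamma_x B$. For suspension invariance, axiom (3) of the action supplies a natural isomorphism $\Gamma_x \Sigma^i A \cong \Sigma^i \Gamma_x A$, so $\Gamma_x A = 0$ if and only if $\Gamma_x \Sigma^i A = 0$. For coproducts, axiom (4) of the action yields $\Gamma_x \unit \ast \CMcoprod_\lambda A_\lambda \cong \CMcoprod_\lambda \Gamma_x A_\lambda$; since each summand $\Gamma_x A_\lambda$ is a retract of the coproduct, the coproduct vanishes if and only if every summand does.

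The substantive work is the separation axiom. The strategy is to first prove the following product rule: for every $x \in \Vis \sfT^c$ and specialisation-closed $\mcV \subseteq \Vis \sfT^c$,
\[
\Gamma_x \unit \otimes \Gamma_\mcV \unit \cong \begin{cases} \Gamma_x \unit & \text{if } x \in \mcV,\\ 0 & \text{if } x \notin \mcV,\end{cases}
\qquad
\Gamma_x \unit \otimes L_\mcV \unit \cong \begin{cases} 0 & \text{if } x \in \mcV,\\ \Gamma_x \unit & \text{if } x \notin \mcV.\end{cases}
\]
Granting this, the axiom follows by applying $(-)\ast A$ to these isomorphisms, together with associativity of the action, giving $\Gamma_x(\Gamma_\mcV \unit \ast A) \cong \Gamma_x A$ if $x \in \mcV$ and $0$ otherwise, with the analogous statement for $L_\mcV$; the separation identities then drop out of the definition of $\supp$.

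The crux is therefore the product rule. The key inputs are the Bousfield triangle $\Gamma_\mcV \unit \to \unit \to L_\mcV \unit \to \Sigma\Gamma_\mcV\unit$, the commutativity of smashing (a)cyclisations under $\otimes$, and the identity $\Gamma_\mcU \unit \otimes L_\mcU \unit \cong 0$ for any Thomason $\mcU$. Writing $\Gamma_x \unit = \Gamma_{\mcU_1} \unit \otimes L_{\mcU_2} \unit$ for Thomason subsets satisfying $\mcU_1 \setminus (\mcU_1 \cap \mcU_2) = \{x\}$, the case $x \in \mcV$ reduces, after possibly enlarging $\mcU_2$ to $\mcU_2 \cup (\mcU_1 \setminus \mcV)$ (which preserves the defining difference $\{x\}$, using \cite[Corollary 7.5]{BaRickard} to recognise we get the same idempotent), to checking that $L_\mcV \unit \otimes \Gamma_{\mcU_1}\unit \otimes L_{\mcU_2}\unit$ contains a factor of the form $L_\mcU\unit \otimes \Gamma_\mcU\unit$ for a suitable $\mcU$ and hence vanishes; smashing the Bousfield triangle against $\Gamma_x\unit$ then upgrades this to $\Gamma_\mcV \unit \otimes \Gamma_x \unit \cong \Gamma_x \unit$. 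The case $x \notin \mcV$ is dual, choosing the Thomason data so that $L_{\mcU_1} \unit$ absorbs $\Gamma_\mcV \unit$. The main obstacle is precisely this combinatorial juggling of Thomason subsets around the point $x$, which is what makes the separation axiom deeper than the first three items.
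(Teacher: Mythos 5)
This proposition is quoted in the paper from \cite[5.7]{StevensonActions} with no proof given, so there is no in-paper argument to compare against; your route is in any case the standard one. Your treatments of the first three bullets (exactness of $\Gamma_x\unit\ast(-)$, compatibility with suspension, and axiom (4) plus retracts for coproducts) are correct, and your reduction of the separation axiom to the ``product rule'' for $\Gamma_x\unit\otimes\Gamma_\mcV\unit$ and $\Gamma_x\unit\otimes L_\mcV\unit$, transported through associativity of the action, is exactly the right idea. One background point worth making explicit: for $\Gamma_\mcV\unit$ and $L_\mcV\unit$ to be defined, the specialisation closed subset $\mcV$ must be Thomason, which is automatic in the noetherian setting the paper (and Stevenson) work in.

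The one step that would fail as written is the proposed enlargement of $\mcU_2$ to $\mcU_2\cup(\mcU_1\setminus\mcV)$: a set difference of Thomason (or specialisation closed) subsets need not be specialisation closed, so $\mcU_1\setminus\mcV$ need not be Thomason, the functor $L_{\mcU_2\cup(\mcU_1\setminus\mcV)}$ need not exist, and \cite[Corollary 7.5]{BaRickard} cannot be invoked for that pair. Fortunately the detour is unnecessary. Use the standard idempotent identities $\Gamma_{\mcW_1}\unit\otimes\Gamma_{\mcW_2}\unit\cong\Gamma_{\mcW_1\cap\mcW_2}\unit$, $L_{\mcW_1}\unit\otimes L_{\mcW_2}\unit\cong L_{\mcW_1\cup\mcW_2}\unit$ and $\Gamma_{\mcW}\unit\otimes L_{\mcW}\unit=0$ (all intersections and unions involved stay Thomason). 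If $x\in\mcV$, then $\mcU_1\subseteq\mcU_2\cup\mcV$, so
\[
\Gamma_x\unit\otimes L_\mcV\unit\cong\Gamma_{\mcU_1}\unit\otimes L_{\mcU_2\cup\mcV}\unit\cong\Gamma_{\mcU_1}\unit\otimes\Gamma_{\mcU_2\cup\mcV}\unit\otimes L_{\mcU_2\cup\mcV}\unit\cong 0,
\]
and tensoring the idempotent triangle $\Gamma_\mcV\unit\to\unit\to L_\mcV\unit$ with $\Gamma_x\unit$ shows the canonical map $\Gamma_x\unit\otimes\Gamma_\mcV\unit\to\Gamma_x\unit$ is an isomorphism. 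If $x\notin\mcV$, then $\mcU_1\cap\mcV\subseteq\mcU_2$, so $\Gamma_x\unit\otimes\Gamma_\mcV\unit\cong\Gamma_{\mcU_1\cap\mcV}\unit\otimes L_{\mcU_2}\unit\cong 0$, and the same triangle gives $\Gamma_x\unit\otimes L_\mcV\unit\cong\Gamma_x\unit$. With the product rule established this way, the rest of your argument goes through.
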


Considering a big tt-category acting on itself via the tensor product, the following additional properties hold.

\begin{prop}\cite[7.17 (a)]{BaRickard}
	Let $\sfT$ be a tt-category acting on itself via the tensor product. Suppose in addition that the spectrum $\Spc(\sfT^c)$ is a noetherian topological space. Then 
	\begin{itemize}
		\item for every compact object $t \in \sfT^c$ we have
		\[\supp_{(\sfT,\ast)}t = \supp_{\sfT^c}t.\]
		\item $\supp_{(\sfT,\ast)}(0)=\varnothing$ and $\supp_{(\sfT,\ast)}(\unit) = \Spc(\sfT^c).$
		\item $\supp_{(\sfT,\ast)}(t\otimes t^\prime) \subseteq \supp_{(\sfT,\ast)}(t) \cap \supp_{(\sfT,\ast)}(t^\prime).$
	\end{itemize}
\end{prop}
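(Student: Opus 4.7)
Parts (2) and (3) follow easily from (1) plus associativity of the tensor product. For (3), associativity gives $\Gamma_x(t\otimes t') = \Gamma_x\unit \otimes t \otimes t' \cong \Gamma_x(t)\otimes t'$, so if $\Gamma_x(t)=0$ (or $\Gamma_x(t')=0$) then $\Gamma_x(t\otimes t')=0$; the symmetric statement then yields $\supp(t\otimes t') \subseteq \supp(t)\cap \supp(t')$. For (2), $\supp(0)=\varnothing$ is immediate since $\Gamma_x\unit \otimes 0 = 0$, while $\supp(\unit)=\Spc(\sfT^c)$ is the instance of (1) at the compact object $\unit$ together with the observation that no proper thick tensor-ideal contains $\unit$, so $\supp_{\sfT^c}(\unit)=\Spc(\sfT^c)$.

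The real work is part (1). Noetherianity ensures every specialization-closed subset of $\Spc(\sfT^c)$ is Thomason, so $\Gamma_x\unit = \Gamma_{\mcV(x)}\unit \otimes L_{\mcZ(x)}\unit$ is defined at every point. I establish the two containments separately. For $\supp_{(\sfT,\ast)}(t) \subseteq \supp_{\sfT^c}(t)$, assume $x \notin \supp_{\sfT^c}(t)$. Since $\supp_{\sfT^c}(t)$ is closed and avoids $x$, every $y \in \supp_{\sfT^c}(t)$ satisfies $\overline{\{y\}} \subseteq \supp_{\sfT^c}(t)$, so $x \notin \overline{\{y\}}$, that is $y \in \mcZ(x)$. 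Hence $\supp_{\sfT^c}(t) \subseteq \mcZ(x)$ and therefore $t \in \tau(\mcZ(x))$. By Theorem \ref{millerneeman} the kernel of $L_{\mcZ(x)}$ intersected with $\sfT^c$ is exactly $\tau(\mcZ(x))$, so $L_{\mcZ(x)}(t) = L_{\mcZ(x)}\unit \otimes t = 0$, and therefore $\Gamma_x(t)=0$.

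The main obstacle is the reverse containment: given $x \in \supp_{\sfT^c}(t)$, one must show $\Gamma_x(t) \neq 0$. The strategy is to pass to the smashing localization. Since $x \in \supp_{\sfT^c}(t) \setminus \mcZ(x)$, $t \notin \tau(\mcZ(x))$, so by Theorem \ref{millerneeman} the image of $t$ in the compactly generated category $L_{\mcZ(x)}\sfT$ is a nonzero compact object. The localized spectrum $\Spc(\sfT^c) \setminus \mcZ(x)$ is again noetherian and sober, and the point $x$ has become closed there; tensoring with $\Gamma_{\mcV(x)}\unit$ corresponds to the Rickard idempotent for the closed point $\{x\}$ in this localized category. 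I would then invoke the local-to-global principle of Balmer--Favi \cite{BaRickard}, which under noetherianity asserts that the idempotent associated to a closed point does not annihilate compact objects whose support contains that point, to conclude $\Gamma_x(t)\neq 0$. The noetherian hypothesis does essential work at this last step, guaranteeing both that all the relevant subsets remain Thomason and that the local-to-global principle is available.
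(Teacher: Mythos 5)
Your handling of parts (2) and (3), and of the containment $\supp_{(\sfT,\ast)}(t)\subseteq\supp_{\sfT^c}(t)$, is fine (note the paper itself offers no proof here -- it quotes \cite[7.17(a)]{BaRickard} -- so the comparison is with the argument in that source). The genuine gap is in your final step for the hard containment. The local-to-global principle does not say what you attribute to it: it says that an object $X$ vanishes if and only if $\Gamma_y X=0$ for \emph{all} points $y$ (equivalently, a nonzero object has \emph{some} point where its idempotent piece is nonzero). It does not say that the idempotent of a specific closed point fails to annihilate a compact object supported there -- that assertion is precisely the statement you are trying to prove, specialised to closed points, so as written the argument is circular. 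Applying local-to-global (or the detection property) to the nonzero compact image $t_x$ in $L_{\mcZ(x)}\sfT$ only produces \emph{some} point $y$ with $\Gamma_y t_x\neq 0$, not the closed point $x$ itself. In addition, the identification of $\Gamma_{\mcV(x)}\unit\otimes L_{\mcZ(x)}\unit$ with the Rickard idempotent of the closed point $\{x\}$ in the localised category is true but is itself a compatibility statement (idempotents versus finite localisation) that you assert without proof.

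The missing idea is a compact object witnessing the point. Since $\Spc(\sfT^c)$ is noetherian, every open is quasi-compact, so the closed set $\mcV(x)=\overline{\{x\}}$ has quasi-compact complement and hence equals $\supp_{\sfT^c}(s)$ for a single compact object $s$ (this is the same fact used in the proof of Lemma \ref{techstuff}(5)). Now suppose $x\in\supp_{\sfT^c}(t)$ but $\Gamma_x t=\Gamma_{\mcV(x)}\unit\otimes L_{\mcZ(x)}\unit\otimes t=0$. Tensoring with $s$ and using that $\supp_{\sfT^c}(s\otimes t)\subseteq\mcV(x)$ forces $\Gamma_{\mcV(x)}\unit\otimes s\otimes t\cong s\otimes t$, one gets $L_{\mcZ(x)}\unit\otimes(s\otimes t)=0$, i.e.\ $s\otimes t\in\loc(\sfT^c_{\mcZ(x)})\cap\sfT^c=\sfT^c_{\mcZ(x)}$ by Theorem \ref{millerneeman}(1). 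Hence $\supp_{\sfT^c}(s\otimes t)\subseteq\mcZ(x)$; but the tensor formula gives $x\in\supp_{\sfT^c}(s)\cap\supp_{\sfT^c}(t)=\supp_{\sfT^c}(s\otimes t)$ while $x\notin\mcZ(x)$, a contradiction. This closes the gap directly in $\sfT$, and it also makes your passage to the localised category (and the unproved idempotent compatibility) unnecessary.
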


From now on we will simply write $\supp$ for any of the support theories detailed so far, unless it is useful to specify further.	

In \cite{BaSSS} Balmer constructs a locally ringed space structure on the spectrum $\Spc(\sfT^c)$. We will study certain collections of sheaves of modules on this space. In order to do so we will provide Balmer's construction and adjusted notation.

Throughout we assume $\sfT$ is a big tt-category with compact objects $\sfT^c.$

\begin{defn}\cite[2.1]{BaSSS}
	We define the \emph{central ring} $R_{\sfT}$ to be the endomorphism ring of the tensor unit
	\[R_{\sfT}:=\End_{\sfT}(\unit).\]
	Given a compact invertible object $u$ we define the \emph{twisted central ring} $R^{\bullet}_{\sfT}$ to be 
	\[R^{\bullet}_{\sfT} := \Hom^{\bullet}_{\sfT}(\unit,\unit).\]
	Note that this is a special case of Definition \ref{defgradedhom} where $a=b=\unit.$
\end{defn}

\begin{prop}\cite[3.3]{BaSSS}
	Let $u \in \sfT^c$ be invertible. Then there exists $\epsilon_u \in R_{\sfT^c}$ such that $R^{\bullet}_{\sfT}$ is $\epsilon_u$-commutative. That is, given two homogenous elements $f$ and $g$ of orders $i$ and $j$ respectively, we have $fg=\epsilon_u^{ij}gf.$ 
\end{prop}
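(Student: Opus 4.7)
The plan is to exhibit $\epsilon_u$ as the element of $R_\sfT$ that encodes the symmetry $\tau_{u,u}:u\otimes u\to u\otimes u$ under the canonical identification of endomorphisms of an invertible object with $R_\sfT$. Once this element is pinned down and its interaction with higher tensor powers of $u$ is established, the desired commutation relation falls out from naturality of the braiding applied to $f\otimes g$.

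First I would observe that since $u$ is invertible, tensoring with $u^{\otimes(-k)}$ gives a ring isomorphism $\End_\sfT(u^{\otimes k})\xrightarrow{\sim}\End_\sfT(\unit)=R_\sfT$ for every $k\in\ZZ$. Define $\epsilon_u\in R_\sfT$ to be the image of $\tau_{u,u}$ under this identification for $k=2$. Because the symmetry of a symmetric monoidal category is an involution, $\tau_{u,u}^2=\id_{u\otimes u}$, so $\epsilon_u^2=1$, and in particular $\epsilon_u$ is a unit.

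Next I would prove the key lemma: under the identification $\End_\sfT(u^{\otimes(i+j)})\cong R_\sfT$, the braiding $\tau_{u^{\otimes i},u^{\otimes j}}$ corresponds to $\epsilon_u^{ij}$. This is a double induction on $|i|$ and $|j|$ using the hexagon axiom, which decomposes $\tau_{x\otimes y,z}$ as $(\tau_{x,z}\otimes\id_y)\circ(\id_x\otimes\tau_{y,z})$ up to associators. The case $i=j=1$ is the definition of $\epsilon_u$; iterating the hexagon expresses $\tau_{u^{\otimes i},u}$ as $i$ applications of $\tau_{u,u}$, each contributing a factor of $\epsilon_u$, and a symmetric iteration handles $j>1$. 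For negative exponents, one inserts $u^{\otimes k}\otimes u^{\otimes(-k)}\cong\unit$ and uses that, by the hexagon and the positive case, $\tau_{u^{\otimes(-1)},u}$ must correspond to $\epsilon_u^{-1}$.

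Finally, the product of homogeneous elements $f:\unit\to u^{\otimes i}$ and $g:\unit\to u^{\otimes j}$ in $R^{\bullet}_\sfT$ is the composite
\[fg:\unit\xrightarrow{\sim}\unit\otimes\unit\xrightarrow{f\otimes g}u^{\otimes i}\otimes u^{\otimes j}\xrightarrow{\sim}u^{\otimes(i+j)},\]
with $gf$ defined by swapping the roles. Naturality of the braiding applied to the pair $(f,g)$ gives
\[\tau_{u^{\otimes i},u^{\otimes j}}\circ(f\otimes g)=(g\otimes f)\circ\tau_{\unit,\unit}=g\otimes f,\]
since $\tau_{\unit,\unit}=\id_\unit$. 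Translating this identity through the identification $\End_\sfT(u^{\otimes(i+j)})\cong R_\sfT$ and applying the key lemma yields $gf=\epsilon_u^{ij}\cdot fg$, which is equivalent to $fg=\epsilon_u^{ij}gf$ because $\epsilon_u^2=1$.

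The principal obstacle is the careful bookkeeping in the double induction: the hexagon axiom produces $\tau_{u^{\otimes i},u^{\otimes j}}$ as a composite of instances of $\tau_{u,u}$ interleaved with associators and unitors, and one must verify that these coherence isomorphisms are absorbed into the identification $\End_\sfT(u^{\otimes(i+j)})\cong R_\sfT$ without contributing extra factors. The extension to $i,j<0$ requires an additional comparison with the duality data of $u$, but introduces no conceptually new step beyond the positive case.
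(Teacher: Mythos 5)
Your argument is correct and is essentially the proof Balmer gives for the cited result \cite[3.3]{BaSSS} (the present paper only recalls the statement without proof): one defines $\epsilon_u$ as the scalar corresponding to the switch $\tau_{u,u}$ under $\End_\sfT(u^{\otimes 2})\cong R_\sfT$, shows $\tau_{u^{\otimes i},u^{\otimes j}}$ acts as $\epsilon_u^{ij}$ via the hexagon/coherence, and concludes by naturality of the symmetry applied to $f\otimes g$. The only point you gloss over --- that the identification $\End_\sfT(u^{\otimes(i+j)})\cong R_\sfT$ converts post-composition with an automorphism into multiplication by the corresponding scalar on $\Hom_\sfT(\unit,u^{\otimes(i+j)})$ --- is a routine property of invertible objects and does not constitute a gap.
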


We note that the space $\Spc(\sfT^c)$ has a basis of quasi-compact open subsets \cite[2.7, 2.14]{BaSpec}.

\begin{construction}\cite[6.1]{BaSSS}
	Let $U \subset \Spc(\sfT^c)$ be a quasi-compact open subset with closed complement $Z$. Define $\sfT^c_Z :=\{t \in \sfT^c \ \vert \ \supp(t)\subset Z\}$ to be the thick $\otimes$-ideal of $\mathcal{T}^c$ supported outside of $U$. Define the tensor-triangulated category $\sfT^c$ on $U$
	\[\sfT^c(U):=(\sfT^c/\sfT^c_Z)^\natural\]
	as the idempotent completion of the Verdier quotient $\sfT^c/\sfT^c_Z.$
	This quotient is the localisation $S^{-1}\sfT^c$ with respect to $S=\{s:a\to b \ \vert \ \supp(\cone(s)) \subset Z\}.$
	
	\begin{rem}
		Note that for every quasi-compact open $U\subset \Spc(\sfT^c)$ we have $U\cong \Spc(\sfT^c(U))$ and moreover if $V$ is a quasi-compact subset of $U$ then $(\sfT^c(U))(V)\cong \sfT^c(V).$ 
	\end{rem}
	
	By construction we have a natural monoidal functor 
	\[q_U: \sfT^c \to \sfT^c(U).\]

\end{construction}

\begin{notation}
	Given a (partially defined) presheaf $F$ we denote the associated sheaf by $F^\#$.
\end{notation}

\begin{defn}\cite[6.4]{BaSSS}
	For each quasi-compact open $U \subset \Spc(\sfT^c)$, define the commutative ring $\prescript{}{\mathrm{p}}{\mcO}_{\sfT}(U)$ by
	\[\prescript{}{\mathrm{p}}{\mcO}_{\sfT}(U):=R_{\sfT(U)}=\Hom_{\sfT(U)}(\unit_U, \unit_U).\]
	For an invertible object $u$ define the $\epsilon_u$-commutative graded ring 
	\[\prescript{}{\mathrm{p}}{\mcO}^{\bullet}_{\sfT}(U):=R^{\bullet}_{\sfT(U)}=\Hom^{\bullet}_{\sfT(U)}(\unit_U, \unit_U).\]
	These form partially defined presheaves on $\Spc(\sfT^c)$, only defined on the basis of quasi-compact open sets. The associated sheaves on the space $\Spc(\sfT^c)$ are denoted
	\begin{align*}
		\mcO_{\sfT}&:=\prescript{}{\mathrm{p}}{\mcO}_{\sfT}^\#,\\
		\mcO^{\bullet}_{\sfT}&:=(\prescript{}{\mathrm{p}}{\mcO}^{\bullet}_{\sfT})^\#.
	\end{align*}
	
	We denote the locally ringed space obtained by
	\[
	\Spec(\sfT)=(\Spc(\sfT^c),\mcO_{\sfT}),\]
	and the graded locally ringed space by
	\[\Spec^{\bullet}(\sfT)=(\Spc(\sfT^c),\mcO^{\bullet}_{\sfT}).\]
	
\end{defn}

There is a natural map of locally ringed spaces between the Balmer spectrum of $\sfT^c$ and the spectrum of the commutative ring $R_{\sfT}.$

\begin{construction}\cite[5.6, 6.10]{BaSSS}
	Let $\mcP \in \Spc(\sfT^c)$ and define 
	\[\rho^{\bullet}_{\sfT}(\mcP):=\{f \in (R^{\bullet}_{\sfT})^{\hom}\ \vert \ \cone(f)\not\in\mcP\}.\]
	By \cite[5.6]{BaSSS} $\rho^{\bullet}_{\sfT}(\mcP)$ is a homogenous prime ideal of $R^{\bullet}_{\sfT}$. Moreover, $\rho^{\bullet}_{\sfT}:\Spc(\sfT^c)\to \Spec^{\mathrm{h}}(R^{\bullet}_{\sfT})$ is continuous and natural in $\sfT^c$.
	For every $s \in (R^{\bullet}_{\sfT})^\mathrm{even}$ we define \[U(s):=U(\cone(s))=\{\mcP\in\Spc(\sfT^c)\ \vert \ \cone(s) \in \mcP\}.\] For each distinguished open $D(s)$ in $\Spec(R_{\sfT})$ we have $U(s) = (\rho^{\bullet}_{\sfT})^{-1}(D(s)).$ 
	By \cite[Lemma 6.9]{BaSSS} we have
	\[\mcO^{\bullet}_{\Spec^{\mathrm{h}}(R^\bullet_\sfT)}(D(s))\cong \prescript{}{\mathrm{p}}{\mcO}^{\bullet}_{\sfT}(U(s))\]
	and both are naturally isomorphic to $R^{\bullet}_{\sfT}[s^{-1}].$
	This allows us to construct a ring homomorphism
	\[r_{D(s)}: \mcO^{\bullet}_{\Spec^{\mathrm{h}}(R^{\bullet}_{\sfT})}(D(s))\to \mcO^{\bullet}_{\sfT}(U(s))\]
	as the composition of the isomorphism $\mcO^{\bullet}_{\Spec^{\mathrm{h}}(R^\bullet_\sfT)}(D(s))\cong \prescript{}{\mathrm{p}}{\mcO}^{\bullet}_{\sfT}(U(s))$ followed by the sheafification morphism.
	This construction is compatible with restriction and defines a morphism of ringed spaces
	\[(\rho^{\bullet}_{\sfT},r):\Spec^\bullet(\sfT) \to \Spec^{\mathrm{h}}(R^{\bullet}_{\sfT}).\]
\end{construction}

\begin{prop}\emph{\cite[6.11]{BaSSS}}
	The map 
	\[(\rho^{\bullet}_{\sfT},r):\Spec^\bullet(\sfT) \to \Spec^{\mathrm{h}}(R^{\bullet}_{\sfT})\]
	is a map of locally ringed spaces. Moreover, if $\rho^{\bullet}_{\sfT}$ is a homeomorphism, then $(\rho^{\bullet}_{\sfT},r)$ is an isomorphism of graded locally ringed spaces.
	The degree $0$ restriction of this map defines a map of locally ringed spaces  
	\[(\rho_{\sfT},r):\Spec(\sfT) \to \Spec(R_{\sfT})\] such that if $\rho_\sfT$ is a homeomorphism then $(\rho_\sfT,r)$ is an isomorphism of locally ringed spaces.
\end{prop}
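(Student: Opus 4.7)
The plan is to verify both assertions via a stalk-wise analysis, exploiting the ring isomorphisms $\mcO^{\bullet}_{\Spec^{\mathrm{h}}(R^\bullet_\sfT)}(D(s)) \cong R^{\bullet}_{\sfT}[s^{-1}] \cong \prescript{}{\mathrm{p}}{\mcO}^{\bullet}_{\sfT}(U(s))$ supplied by the preceding construction. That $(\rho^{\bullet}_{\sfT}, r)$ is a morphism of ringed spaces is essentially already recorded, since the family $\{r_{D(s)}\}$ is compatible with restriction on the basis of distinguished opens of $\Spec^{\mathrm{h}}(R^\bullet_\sfT)$ and hence glues to a morphism of sheaves of graded rings.

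To upgrade this to a morphism of locally ringed spaces I would compute the stalks at an arbitrary $\mcP \in \Spc(\sfT^c)$. Unwinding definitions, $\mcP \in U(s)$ precisely when $\cone(s) \in \mcP$, which by definition of $\rho^{\bullet}_{\sfT}$ is equivalent to $s \notin \rho^{\bullet}_{\sfT}(\mcP)$. Hence the stalk of $\mcO^{\bullet}_{\sfT}$ at $\mcP$, computed through the cofinal system of basic opens $U(s)$ with $s$ homogeneous of even degree, is the colimit of the $R^{\bullet}_{\sfT}[s^{-1}]$ for $s \notin \rho^{\bullet}_{\sfT}(\mcP)$, which is exactly the homogeneous localisation $(R^{\bullet}_{\sfT})_{\rho^{\bullet}_{\sfT}(\mcP)}$; and this is by definition the stalk of $\mcO^{\bullet}_{\Spec^{\mathrm{h}}(R^{\bullet}_{\sfT})}$ at $\rho^{\bullet}_{\sfT}(\mcP)$. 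Because $r_{D(s)}$ is compatible with these colimits, the induced map on stalks is an isomorphism of (graded) local rings, and in particular is local.

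For the isomorphism claim, suppose $\rho^{\bullet}_{\sfT}$ is a homeomorphism. Then the preimages $U(s) = (\rho^{\bullet}_{\sfT})^{-1}(D(s))$ inherit the property of forming a basis of the topology on $\Spc(\sfT^c)$. A morphism of sheaves is an isomorphism as soon as it is an isomorphism on every element of a basis of the target, so it suffices to check $r_{D(s)}$ itself. By construction this is the composite of the ring isomorphism $\mcO^{\bullet}_{\Spec^{\mathrm{h}}(R^\bullet_\sfT)}(D(s)) \cong \prescript{}{\mathrm{p}}{\mcO}^{\bullet}_{\sfT}(U(s))$ with the sheafification map $\prescript{}{\mathrm{p}}{\mcO}^{\bullet}_{\sfT}(U(s)) \to \mcO^{\bullet}_{\sfT}(U(s))$, so the claim reduces to showing that sheafification does not alter the presheaf values on the basis of quasi-compact opens. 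This sheafifiability on the basis is the main obstacle; it is precisely the content of the Balmer sheaf construction invoked above, and everything else in the argument is formal.

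Finally, the degree $0$ statement is obtained by restriction: $\rho_{\sfT}$ is the restriction of $\rho^{\bullet}_{\sfT}$ to degree zero, the sheaf $\mcO_{\sfT}$ is the degree zero component of $\mcO^{\bullet}_{\sfT}$, and the local-ring stalk identification above restricts to $(R_{\sfT})_{\rho_{\sfT}(\mcP)} \cong \mcO_{\sfT,\mcP}$. The morphism-of-locally-ringed-spaces claim therefore descends verbatim to the ungraded setting, as does the isomorphism conclusion when $\rho_\sfT$ is itself a homeomorphism.
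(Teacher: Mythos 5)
Your argument for the ``locally ringed'' claim rests on the assertion that the opens $U(s)$, for $s \in (R^{\bullet}_{\sfT})^{\mathrm{even}}$, form a cofinal system of quasi-compact neighbourhoods of an arbitrary $\mcP$, so that $\mcO^{\bullet}_{\sfT,\mcP} \cong \colim_{s \notin \rho^{\bullet}_{\sfT}(\mcP)} R^{\bullet}_{\sfT}[s^{-1}] \cong (R^{\bullet}_{\sfT})_{\rho^{\bullet}_{\sfT}(\mcP)}$. This is false in general: cofinality at every point would make $\{U(\cone(s))\ \vert\ s \in R^{\bullet}_{\sfT}\}$ a basis of $\Spc(\sfT^c)$, and that already forces $\rho^{\bullet}_{\sfT}$ to be injective (this is exactly the mechanism behind Corollary \ref{quasiaffinecomparison}, via \cite{DSgraded2}), a hypothesis not available here. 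The correct identification of the stalk is Lemma \ref{generalstalk} (that is, \cite[6.3]{BaSSS}, which the paper explicitly flags as the needed input): $\mcO^{\bullet}_{\sfT,\mcP} \cong \Hom^{\bullet}_{\sfT^c/\mcP}(\unit,\unit)$, and the induced map $(R^{\bullet}_{\sfT})_{\rho^{\bullet}_{\sfT}(\mcP)} \to \Hom^{\bullet}_{\sfT^c/\mcP}(\unit,\unit)$ is in general only a \emph{local homomorphism}, not an isomorphism. Locality therefore has to be checked directly: a homogeneous $s$ lies outside $\rho^{\bullet}_{\sfT}(\mcP)$ iff $\cone(s)\in\mcP$ iff $s$ becomes invertible in $\sfT^c/\mcP$, so elements outside the prime become units and the maximal ideal lands in the maximal ideal. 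Your claimed stalk isomorphism would prove far too much, namely that the comparison map is a stalkwise isomorphism for every big tt-category.

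The ``moreover'' part has a matching gap. You reduce the isomorphism claim to the statement that sheafification does not alter the sections of $\prescript{}{\mathrm{p}}{\mcO}^{\bullet}_{\sfT}$ on the basic opens, and then dismiss this as ``precisely the content of the Balmer sheaf construction''. The construction provides no such thing: the presheaf need not satisfy the sheaf condition on quasi-compact opens (this is exactly why Section 8 of the paper needs the $1$-coconnectedness hypotheses of Theorem \ref{sheafcondition}), so your reduction is circular at the one point that requires proof. The repair is to relocate your stalk computation to this half of the argument: when $\rho^{\bullet}_{\sfT}$ is a homeomorphism the opens $U(s)=(\rho^{\bullet}_{\sfT})^{-1}(D(s))$ genuinely do form a basis, the systems $\{U(s)\ni\mcP\}$ become cofinal, the stalk maps are isomorphisms onto $(R^{\bullet}_{\sfT})_{\rho^{\bullet}_{\sfT}(\mcP)}$, and a morphism of sheaves over a homeomorphism inducing isomorphisms on all stalks is an isomorphism of graded locally ringed spaces; the degree-zero statement then restricts as you indicate. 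For comparison, the paper gives no independent argument at all: it quotes \cite[6.11]{BaSSS} and records Lemma \ref{generalstalk} as the essential ingredient, which is precisely the ingredient your proposal replaces with an unjustified cofinality claim.
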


The proofs for the above require the following useful lemma. We will use more general versions at various points in this work.

\begin{lem}\cite[6.3]{BaSSS}
	\label{generalstalk}
	Let $\mcP \in \Spc(\sfT^c)$, and let $a,b \in \sfT^c$. Let $\mcU$ be the collection of all those quasi-compact open subsets of $\Spc(\sfT^c)$ containing $\mcP$. Then there is a natural isomorphism 
	\[\colim_{U\in\mcU}\Hom_{\sfT^c(U)}(a,b) \cong \Hom_{\sfT^c/\mcP}(a,b).\]
\end{lem}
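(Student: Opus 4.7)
The plan is to reduce the statement to the standard fact that forming a Verdier quotient commutes with filtered unions of the thick ideal being quotiented. First I would bypass the idempotent completion in the definition $\sfT^c(U) = (\sfT^c/\sfT^c_{Z(U)})^\natural$, where $Z(U) := \Spc(\sfT^c) \setminus U$. Since $(-)^\natural$ is fully faithful and the fixed objects $a,b \in \sfT^c$ descend to genuine objects of the Verdier quotient, we have $\Hom_{\sfT^c(U)}(a,b) = \Hom_{\sfT^c/\sfT^c_{Z(U)}}(a,b)$, and the problem reduces to establishing the isomorphism
\[
\colim_{U \in \mcU} \Hom_{\sfT^c/\sfT^c_{Z(U)}}(a,b) \cong \Hom_{\sfT^c/\mcP}(a,b).
\]

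The central structural input is the identification of the prime $\mcP$ with the directed union $\bigcup_{U \in \mcU} \sfT^c_{Z(U)}$ of thick $\otimes$-ideals. The inclusion $\sfT^c_{Z(U)} \subseteq \mcP$ for $U \in \mcU$ is immediate: $t \in \sfT^c_{Z(U)}$ means $\supp(t) \subseteq Z(U)$, and since $\mcP \in U$, this forces $\mcP \notin \supp(t)$, i.e.\ $t \in \mcP$. For the reverse containment, any $t \in \mcP$ satisfies $\mcP \in U(t)$, a quasi-compact open by the definition of the topology on $\Spc(\sfT^c)$, and tautologically $t \in \sfT^c_{Z(U(t))}$. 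Filteredness of the system follows from the identity $U(a) \cap U(b) = U(a \oplus b)$, so any two members of $\mcU$ contain a common basic (hence quasi-compact) open neighbourhood of $\mcP$ inside their intersection.

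The remainder of the proof is a direct application of the calculus of fractions. A morphism in $\sfT^c/\mcP$ is represented by a roof $a \xleftarrow{s} a' \to b$ with $\cone(s) \in \mcP$; by the previous paragraph, $\cone(s)$ already lies in some $\sfT^c_{Z(U)}$, so the roof represents a morphism in $\sfT^c/\sfT^c_{Z(U)}$, giving surjectivity of the natural map from the colimit. For injectivity, two roofs identified in $\sfT^c/\mcP$ are related by a further roof whose obstruction cone lies in $\mcP$, hence in some $\sfT^c_{Z(U')}$; filteredness then lets us pass to a common refinement $U'' \in \mcU$ contained in both $U$ and $U'$, where the identification already holds.

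The main obstacle is the bookkeeping in this final step: one must verify that every finite piece of data witnessing a relation in $\sfT^c/\mcP$ (the obstruction cones to roofs being equivalent) can be simultaneously pushed into a single finite-level quotient $\sfT^c/\sfT^c_{Z(U'')}$. This ultimately hinges on the directedness established in the second paragraph, together with the compatibility of the roof description under the fully faithful embedding $\sfT^c/\sfT^c_{Z(U'')} \hookrightarrow \sfT^c/\mcP$ induced by the inclusion $\sfT^c_{Z(U'')} \subseteq \mcP$.
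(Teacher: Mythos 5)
Your overall route is the same as that of the result the paper is quoting (the paper gives no proof of its own, citing \cite[6.3]{BaSSS}, and Balmer's argument is precisely yours): reduce modulo the idempotent completion, identify $\mcP=\bigcup_{U\in\mcU}\sfT^c_{Z(U)}$ as a filtered union of thick $\otimes$-ideals (this is Lemma \ref{techstuff}(4) in the paper, which you correctly reprove), and then use the calculus of fractions to see that hom-groups in a Verdier quotient by a filtered union are the filtered colimit of the hom-groups in the quotients by the members. The reduction via $(-)^\natural$, the filteredness via $U(a)\cap U(b)=U(a\oplus b)$, and the surjectivity/injectivity bookkeeping with roofs are all correct in substance.

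One assertion in your final paragraph is wrong and should be removed: the functor $\sfT^c/\sfT^c_{Z(U'')}\to\sfT^c/\mcP$ induced by $\sfT^c_{Z(U'')}\subseteq\mcP$ is a further Verdier localisation, \emph{not} a fully faithful embedding. If it were fully faithful, then every single $U\in\mcU$ would already compute $\Hom_{\sfT^c/\mcP}(a,b)$; taking $U=\Spc(\sfT^c)$, where $\sfT^c_{Z(U)}=0$, this would give $\Hom_{\sfT^c}(a,b)\cong\Hom_{\sfT^c/\mcP}(a,b)$, which is false in general, and it would in any case trivialise the lemma, whose whole content is that full faithfulness only appears after passing to the colimit. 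Fortunately your injectivity argument does not need this claim: it only needs that the canonical functors send roofs to roofs (functoriality of localisation, since morphisms inverted at level $U''$ remain inverted modulo $\mcP$) together with the directedness you established, so that the finitely many obstruction cones witnessing an identification in $\sfT^c/\mcP$ can be pushed into a single $\sfT^c_{Z(U'')}$ with $U''\in\mcU$. Deleting the words ``fully faithful'' repairs the write-up.
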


We end this section by recalling some technical results about the Balmer support and prime ideals.

\begin{lem}
	\label{techstuff}
	For an essentially small tensor triangulated category $\sfT^c$ the following hold for all $\mcP\in\Spc(\sfT^c).$
	
	\begin{enumerate}
		\item $\supp(\mcP)=\{\mcQ \in \Spc(\sfT^c)\ \vert \ \mcP\not\subseteq\mcQ\}=\mcZ(\mcP)$.
		\item $\supp(\mcP)=\bigcup_{U(a)\ni\mcP}\supp(a) =\bigcup_{U\ni\mcP}\Spc(\sfT^c)\setminus U$ taken over all quasi-compact opens $U$ containing $\mcP.$
		\item $\sfT^c_{\supp(\mcP)}=\mcP$.
		\item $\mcP=\bigcup_{\mcP\in U}\sfT^c_{\Spc(\sfT^c)\setminus U}$.
		\item For a Thomason subset $Z \subseteq \Spc(\sfT^c)$ we have $\supp \sfT^c_Z \subseteq Z$. Moreover if $\Spc(\sfT^c)$ is noetherian then $\supp \sfT^c_Z = Z$. 
	\end{enumerate}
\end{lem}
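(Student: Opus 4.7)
The plan is to unpack each assertion from definitions, invoking Balmer's classification of radical thick tensor-ideals as the principal external input. For (1) I first compute the closure of a point in the Balmer spectrum. The basic closed subsets are the $\supp(a)=\{\mcR \ \vert \ a\notin\mcR\}$ as $a$ ranges over $\sfT^c$, so
\[\overline{\{\mcQ\}}=\bigcap_{a\notin\mcQ}\supp(a)=\{\mcR\ \vert \ \mcR\subseteq\mcQ\}.\]
Hence $\mcV(\mcQ)$ has this description and $\mcZ(\mcP)=\{\mcQ\ \vert \ \mcP\not\subseteq\mcQ\}$. The other equality in (1) follows by directly unpacking: $\mcQ\in\supp(\mcP)=\bigcup_{a\in\mcP}\supp(a)$ iff there exists $a\in\mcP$ with $a\notin\mcQ$, iff $\mcP\not\subseteq\mcQ$.

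For (2), observe that $U(a)\ni\mcP$ iff $a\in\mcP$, so the first union is $\bigcup_{a\in\mcP}\supp(a)=\supp(\mcP)$. Every quasi-compact open is of the form $U(a)$ for some compact $a$, and $\Spc(\sfT^c)\setminus U(a)=\supp(a)$, so the second union reduces to the first. For (3) I would apply Balmer's classification: the assignment $Z\mapsto\sfT^c_Z$ is a bijection between Thomason subsets of $\Spc(\sfT^c)$ and radical thick tensor-ideals, with inverse $\mcJ\mapsto\supp(\mcJ)$. Since prime ideals are radical, this gives $\sfT^c_{\supp(\mcP)}=\mcP$ immediately.

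For (4), if $t\in\mcP$ then $U(t)$ is a quasi-compact open containing $\mcP$ with $\supp(t)=\Spc(\sfT^c)\setminus U(t)$, hence $t\in\sfT^c_{\Spc(\sfT^c)\setminus U(t)}$. Conversely, any $t\in\sfT^c_{\Spc(\sfT^c)\setminus U}$ with $\mcP\in U$ satisfies $\mcP\notin\supp(t)$, forcing $t\in\mcP$. For (5), the inclusion $\supp\sfT^c_Z\subseteq Z$ is immediate from $\supp(t)\subseteq Z$ for $t\in\sfT^c_Z$. For the reverse inclusion in the noetherian case, I would again apply Balmer's classification: $\sfT^c_Z$ is a radical thick tensor-ideal corresponding to $Z$ under the bijection, yielding $\supp(\sfT^c_Z)=Z$.

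The only step requiring real care is the closure computation in (1), where the inclusion direction reverses (points of $\overline{\{\mcQ\}}$ are the subsets of $\mcQ$, not the supersets); once this is nailed down the remaining parts are essentially bookkeeping against the classification of radical ideals. I would also note that the noetherian hypothesis in (5) appears to be used only to cleanly identify Thomason subsets with specialization-closed ones; Balmer's classification itself yields equality for any Thomason $Z$ without that assumption.
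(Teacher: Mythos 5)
Your proposal is correct, and parts (1), (2) and (4) run essentially as in the paper: the same direct unpacking of $\supp(\mcP)=\bigcup_{a\in\mcP}\supp(a)$, the identification of quasi-compact opens with the $U(a)$ via Balmer's \cite[2.14]{BaSpec}, and the same two containments for (4). Your closure computation $\overline{\{\mcQ\}}=\bigcap_{a\notin\mcQ}\supp(a)=\{\mcR\ \vert\ \mcR\subseteq\mcQ\}$ is a welcome expansion of a step the paper merely asserts when identifying $\supp(\mcP)$ with $\mcZ(\mcP)$.

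Where you genuinely diverge is in (3) and (5). The paper proves (3) by hand: $t\in\mcP$ gives $\supp(t)\subseteq\supp(\mcP)$ directly, and conversely $t\in\sfT^c_{\supp(\mcP)}$ forces $\mcP\notin\supp(t)$ (since $\mcP\notin\supp(\mcP)$ by part (1)), hence $t\in\mcP$. You instead quote the classification of radical thick $\otimes$-ideals — equivalently Proposition \ref{radicalsupp}, $\sfT^c_{\supp(\mcJ)}=\sqrt{\mcJ}$, together with the elementary fact that primes are radical — which is perfectly valid but leans on heavier machinery where the paper is self-contained. For (5) the paper argues directly, using the noetherian hypothesis to write $Z=\bigcup_\lambda\supp(t_\lambda)$; you get the reverse inclusion from the classification, and your side remark is accurate: since every closed subset with quasi-compact complement is $\supp(a)$ for some compact $a$ (again \cite[2.14]{BaSpec}), any Thomason subset is automatically a union of supports of compact objects, so $\supp\sfT^c_Z=Z$ holds for all Thomason $Z$ with no noetherian assumption — the paper's hypothesis there is stronger than needed, and its own argument would go through verbatim once this decomposition is noted. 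The trade-off is transparency versus economy: the paper's proofs use nothing beyond the definitions and \cite[2.14]{BaSpec}, while your route gets (3) and (5) in one stroke from Balmer's classification and exposes the redundancy of the noetherian hypothesis.
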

\begin{proof}
	
	\begin{enumerate}
		\item We have $\mcS \in \supp(\mcP)$ iff there exists $p \in \mcP$ such that $\mcS \in \supp(p)$, iff $p \not\in \mcS$, i.e $\mcP \not\subseteq \mcS$. The second equality is immediate from the definition of $\mcZ(\mcP)$ and noting that $\mcP\not\subseteq \mcQ$ iff $\mcP$ is not in the closure of $\mcQ.$
		
		\item First note that $\bigcup_{U\ni\mcP}\Spc(\sfT^c)\setminus U=\Spc(\sfT^c)\setminus \bigcap_{U\ni\mcP}U$. By \cite[2.14]{BaSpec} each quasi-compact open is of the form 
		\[U=U(a):=\{\mcQ\in\Spc(\sfT^c) \ \vert \ a \in \mcQ\},\]
		for some compact object $a$. The complement of $U(a)$ is obviously $\supp(a).$ By assumption $\mcP \in U(a)$ so $a \in \mcP$. An ideal $\mcQ \in \bigcup_{U(a)\ni\mcP}\supp(a)$ 
		if and only if there is some $a$ so that $\mcQ\in \supp(a)$, if and only if $\mcQ \in \supp(\mcP)$. Therefore the equality holds.
		
		\item If $t \in \mcP$ then by definition $\supp(t) \subseteq \supp(\mcP)$ and so $t \in \sfT^c_{\supp(\mcP)}$. 
		If $t \in \sfT^c_{\supp(\mcP)}$ then as $\supp(t) \subseteq \supp(\mcP)$ we have $\mcP \not\in \supp(t)$ and so $t \in \mcP$.
		
		\item Let $t \in \mcP$ and let $Z=\supp(t)$, which is closed with open complement $U$. Note that $\mcP \in U$. It is immediate that $t \in \sfT^c_Z$ and so $\mcP \subseteq \bigcup_{\mcP\in U}\sfT^c_Z$. For the other inclusion let $t \in \bigcup_{\mcP\in U}\sfT^c_Z$. Then there exists an open subset $U$ with closed complement $Z$ such that $\mcP \in U$ and $t \in \sfT^c_Z$. That is $\supp(t) \subseteq Z$. Then $\mcP \not\in \supp(t)$ and so $t \in \mcP$. 
		
		\item Fix $\mcP \in \supp \sfT^c_Z$. Then there exists $t \in \sfT^c_Z$ such that $\mcP \in \supp(t)$ from which it is immediate that $\mcP \in Z$. When $\Spc(\sfT^c)$ is noetherian there exist objects $t_\lambda \in \sfT^c$ such that $Z=\bigcup_{\lambda \in \Lambda} \supp(t_\lambda)$. Then for $\mcP\in Z$ there exists $\lambda \in \Lambda$ such that $\mcP \in \supp(t_\lambda)\subseteq Z$, and so $\mcP \in \supp\sfT^c_{\supp(t)}$ as required.
		
	\end{enumerate}
	
\end{proof}

\begin{defn}
	The \emph{radical} $\sqrt{\mcJ}$ of a thick $\otimes$-ideal $\mcJ$ is defined to be
	\[\sqrt{\mcJ}:=\{r \in \sfT \ \vert \ \exists n \geq 1 \text{ such that }r^{\otimes n} \in \mcJ\}.\]
	A thick subcategory $\mcJ$ is called \emph{radical} if $\mcJ=\sqrt{\mcJ}.$ 
\end{defn}

\begin{rem}
	If the category $\sfT^c$ is rigid then every thick $\otimes$-ideal is radical. All thick $\otimes$-ideals are radical if and only if $r \in \thick^{\otimes}(r\otimes r)$ for every object $r \in \sfT^c$ \cite[4.4]{BaSpec}. 
\end{rem}

\begin{prop}\cite[4.9]{BaSpec}
	\label{radicalsupp}
	Let $\mcJ\subset \sfT^c$ be a thick $\otimes$-ideal. Then 
	\[\sfT^c_{\supp(\mcJ)}=\sqrt{\mcJ}.\]
\end{prop}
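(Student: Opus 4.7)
The plan is to show both sides coincide with $\bigcap_{\mcP \supseteq \mcJ} \mcP$, the intersection of all prime ideals of $\sfT^c$ containing $\mcJ$---the tensor-triangular analogue of the classical ring-theoretic characterisation of the radical.

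For the left-hand side, Lemma \ref{techstuff}(3) gives $\mcP = \sfT^c_{\supp(\mcP)}$ for each prime, so $t \in \mcP$ iff $\supp(t)\subseteq \supp(\mcP)$. Unravelling the definition, $\mcP \in \supp(\mcJ)$ iff some $j \in \mcJ$ lies outside $\mcP$, i.e.\ iff $\mcJ\not\subseteq\mcP$. Contrapositively, $\supp(t)\subseteq \supp(\mcJ)$ holds iff every prime $\mcP$ containing $\mcJ$ also contains $t$. This proves $\sfT^c_{\supp(\mcJ)} = \bigcap_{\mcP\supseteq\mcJ}\mcP$.

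For the right-hand side, one inclusion is immediate: if $r^{\otimes n}\in \mcJ\subseteq\mcP$, then iterating the primeness of $\mcP$ on the factorisation $r^{\otimes n} = r\otimes r^{\otimes(n-1)}$ forces $r\in\mcP$, so $\sqrt{\mcJ}\subseteq\bigcap_{\mcP\supseteq\mcJ}\mcP$. For the converse, given $r\notin\sqrt{\mcJ}$, I would apply Zorn's lemma to the nonempty poset of thick $\otimes$-ideals containing $\mcJ$ and disjoint from the multiplicative system $\{r^{\otimes n} : n\geq 1\}$, and argue that any maximal element $\mcP$ is automatically prime. Indeed, if $a\otimes b\in \mcP$ with $a,b\notin\mcP$, the strictly larger ideals $\thick^\otimes(\mcP\cup\{a\})$ and $\thick^\otimes(\mcP\cup\{b\})$ must meet the system, yielding $r^{\otimes m}\in \thick^\otimes(\mcP\cup\{a\})$ and $r^{\otimes k}\in \thick^\otimes(\mcP\cup\{b\})$; tensoring these and invoking $a\otimes b\in\mcP$ forces $r^{\otimes(m+k)}\in\mcP$, a contradiction. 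Such a maximal $\mcP$ then contains $\mcJ$ but not $r$, completing the reverse inclusion.

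The main obstacle is the tensor-manipulation step in the Zorn argument: verifying that $\thick^\otimes(\mcP\cup\{a\})\otimes \thick^\otimes(\mcP\cup\{b\}) \subseteq \thick^\otimes(\mcP\cup\{a\otimes b\})$, which is then contained in $\mcP$ by hypothesis on $a\otimes b$. This is handled by two nested thickness arguments: fixing $y$ in the second factor, the set of $x$ with $x\otimes y \in \thick^\otimes(\mcP\cup\{a\otimes b\})$ is a thick $\otimes$-ideal, and an inner thickness argument varying $y$ shows it contains both $\mcP$ and $a$, hence all of $\thick^\otimes(\mcP\cup\{a\})$; this is the tt-analogue of the classical fact that the product of two ideals is contained in any ideal containing all their pairwise products.
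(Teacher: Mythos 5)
Your proof is correct, and it is essentially the argument of the cited source: the paper gives no proof of this proposition, citing \cite{BaSpec}, where the statement is obtained exactly as you do, by identifying both $\sfT^c_{\supp(\mcJ)}$ and $\sqrt{\mcJ}$ with $\bigcap_{\mcP\supseteq\mcJ}\mcP$ and producing, via Zorn's lemma, a prime containing $\mcJ$ and disjoint from the $\otimes$-multiplicative set $\{r^{\otimes n}\ \vert\ n\geq 1\}$ whenever $r\notin\sqrt{\mcJ}$. Your two nested ideal-membership arguments giving $\thick^\otimes(\mcP\cup\{a\})\otimes\thick^\otimes(\mcP\cup\{b\})\subseteq\thick^\otimes(\mcP\cup\{a\otimes b\})=\mcP$ are precisely the standard way to verify primeness of the maximal element, so no gap remains.
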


\section{Associated sheaf functors}

Given a big tt-category $\sfT$ we have now seen the construction of locally ringed spaces $\Spec(\sfT^c)$ and $\Spec^\bullet(\sfT^c).$ Given an action of $\sfT$ on a triangulated category $\sfK$ we will define a "relative sheaf functor", which associates to each object $A \in \sfK$ an $\mcO_\sfT$-module (and in the graded case an $\mcO^\bullet_\sfT$-module). This construction extends Balmer's construction of the locally ringed space structure. We will then show how these sheaves interact with the tt-support theories and under what conditions we can determine their coherence properties.

Throughout this section the action of $\sfT$ on $\sfK$ will be denoted by $\ast$.

\begin{defn}
	Consider the spectrum $\Spc(\sfT^c)$. We define the category $\qbasic(\sfT^c)$ of quasi-compact open subsets of $\Spc(\sfT^c)$, with morphisms given by inclusion.
	Given an abelian category $\sfX$ we define $\qbpresheaf_{\sfX}(\Spc(\sfT^c))$ to be the category of $\sfX$-valued presheaves defined over the basis of quasi-compact opens of $\Spc(\sfT^c)$. That is, the category of contravariant functors
	\[F: \qbasic(\sfT^c)^\op \to \sfX.\]
\end{defn}

\begin{rem}
	As the quasi-compact open subsets of $\Spc(\sfT^c)$ are a basis, every $F \in \qbpresheaf_{\sfX}(\Spc(\sfT^c))$ extends uniquely to a sheaf 
	$F^\# \in \Shv_{\sfX}(\Spc(\sfT^c))$.
	
\end{rem}

\begin{construction}
	Let $U \subseteq \Spc(\sfT^c)$ be a quasi-compact open subset with closed complement $Z$. Define 
	\[\sfT_Z = \loc(\sfT^c_Z)=\loc(\{t \in \sfT^c\ \vert \ \supp(t)\subseteq Z\})\]
	and
	\[\sfT(U)=\sfT/\sfT_Z.\]
	Now define 
	\[\sfK(U)=\sfK/(\sfT_Z \ast \sfK).\]
	If $V$ is a quasi-compact open subset of $U$ then $(\sfK(U))(V)\simeq\sfK(V).$
	
	For a prime ideal $\mcP \in \Spc(\sfT^c)$ we define
	\[\sfK(\mcP)=\sfK/(\loc^\otimes(\mcP)\ast\sfK).\]
\end{construction}

There is an action of $\sfT(U)$ on $\sfK(U)$ induced by the action $\ast$ of $\sfT$ on $\sfK$ \cite{StevensonActions}. We will also use $\ast$ to denote this induced action. 

\begin{rem}
	Instead of restricting to a quasi-compact open, we can instead restrict to the complement of a Thomason subset of the spectrum. That is, given a Thomason subset $Z \subseteq \Spc(\sfT^c)$ with complement $U$ we can repeat the above construction to produce the corresponding categories $\sfT(U)$ and $\sfK(U)$. We focus on the case where $U$ is a quasi-compact open, as in this case we have $\Spc(\sfT(U)^c) \cong U$ \cite[1.11]{bfgluing}. 
\end{rem}

\begin{defn}
	Fix an invertible object $u \in \sfT$. Define a functor 
	\[\prescript{}{\mathrm{p}}{[-,-]}(-):(\sfK^c)^\op \times \sfK \times \qbasic(\Spc(\sfT^c)) \to \gr\Ab\] by 
	\[\prescript{}{\mathrm{p}}{[A,B]}(U)=\Hom^\bullet_{\sfK(U)}(A_U,B_U),\]
	where $A_U,$ $B_U$ are the respective images of $A$ and $B$ under the localisation functor $q_U : \sfK \to \sfK(U),$ and \[\Hom^\bullet_{\sfK(U)}(A_U,B_U)\cong \bigoplus_{i \in \ZZ}\Hom_{\sfK(U)}(A_U,(u^{\otimes i}\ast B)_U)\] as in Definition \ref{defgradedhom}.
	This defines a functor
	\[\prescript{}{\mathrm{p}}{[-,-]}:(\sfK^c)^\op \times \sfK \to \qbpresheaf_{\gr\Ab}(\Spc(\sfT^c)),\]
	which extends uniquely to a functor
	\[[-,-]^\#:(\sfK^c)^\op \times \sfK\to \Shv_{\gr\Ab}(\Spc(\sfT^c)).\]
	We call this the \emph{associated sheaf functor}. We say that $[A,B]^\#$ is the \emph{sheaf associated to } $A$ \emph{and} $B$ \emph{relative to} $u$. 
	
\end{defn}

\begin{prop}
	The functor $[-,-]^\#:(\sfK^c)^\op \times \sfK\to \Shv_{\gr\Ab}(\Spc(\sfT^c))$ upgrades to a functor 
	\[[-,-]^\#:(\sfK^c)^\op \times \sfK\to \grMod\mcO^\bullet_\sfT.\]  
\end{prop}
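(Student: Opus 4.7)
The plan is to upgrade the construction at the level of partially defined presheaves and then invoke that sheafification is compatible with the module structure.

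First I would define, for each quasi-compact open $U\subseteq\Spc(\sfT^c)$, a graded ring action of $\prescript{}{\mathrm{p}}{\mcO}^\bullet_\sfT(U)=\Hom^\bullet_{\sfT(U)}(\unit_U,\unit_U)$ on $\prescript{}{\mathrm{p}}{[A,B]}(U)=\Hom^\bullet_{\sfK(U)}(A_U,B_U)$. The action on homogeneous elements is the obvious one given by the action $\ast$: for $r\in\Hom_{\sfT(U)}(\unit_U,u^{\otimes i}\unit_U)$ and $f\in\Hom_{\sfK(U)}(A_U,u^{\otimes j}\ast B_U)$, one sets
\[
r\cdot f := \bigl(u^{\otimes j}\ast B_U \xrightarrow{(u^{\otimes j}\otimes r)\ast B_U} u^{\otimes(i+j)}\ast B_U\bigr)\circ f,
\]
after tacitly using the unitor $l$ and associator $a$ of the action to identify $(u^{\otimes i}\otimes u^{\otimes j})\ast B_U$ with $u^{\otimes(i+j)}\ast B_U$ and $\unit\ast B_U$ with $B_U$. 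The pentagon and unit axioms in the definition of an action, together with $\epsilon_u$-commutativity of $\prescript{}{\mathrm{p}}{\mcO}^\bullet_\sfT(U)$, give associativity and unitality, so $\prescript{}{\mathrm{p}}{[A,B]}(U)$ is a graded $\prescript{}{\mathrm{p}}{\mcO}^\bullet_\sfT(U)$-module.

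Next I would check naturality in $U$: for a quasi-compact inclusion $V\hookrightarrow U$ the induced localisation $\sfT(U)\to\sfT(V)$ is monoidal and the induced $\sfK(U)\to\sfK(V)$ is compatible with the $\sfT(U)$-action, so the restriction maps on $\prescript{}{\mathrm{p}}{[A,B]}$ are semilinear over the restriction maps on $\prescript{}{\mathrm{p}}{\mcO}^\bullet_\sfT$. This assembles $\prescript{}{\mathrm{p}}{[A,B]}$ into a presheaf of graded modules over the presheaf of graded rings $\prescript{}{\mathrm{p}}{\mcO}^\bullet_\sfT$, defined on the basis of quasi-compact opens.

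Then I would appeal to the general fact that the sheafification functor on a basis takes a presheaf of modules over a presheaf of (graded) rings to a sheaf of modules over the sheafified (graded) ring, with the structure morphism given uniquely by the universal property on the basis (concretely, on stalks the action is just the colimit of the actions above). This yields the $\mcO^\bullet_\sfT$-module structure on $[A,B]^\#$. Finally, functoriality in the pair $(A,B)$ is automatic: a morphism $A'\to A$ in $\sfK^c$ and $B\to B'$ in $\sfK$ induce $\prescript{}{\mathrm{p}}{\mcO}^\bullet_\sfT(U)$-linear maps on sections by pre- and post-composition, which commute with the action defined above, and hence give an $\mcO^\bullet_\sfT$-linear morphism after sheafification.

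The only subtlety, and the main thing to be careful about, is making sure the braiding of suspensions and the powers of $u$ is handled consistently so that associativity of the action holds on the nose; the axioms in the definition of the action (in particular condition (3) handling the Koszul sign in commuting suspensions past the action and condition (1) giving the pentagon) are exactly what makes this go through, and $\epsilon_u$-commutativity of $R^\bullet_{\sfT(U)}$ guarantees the module structure is graded in the $\epsilon_u$-sense required by $\grMod\mcO^\bullet_\sfT$.
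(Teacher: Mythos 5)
Your proposal is correct and follows essentially the same route as the paper: define the graded action of $\prescript{}{\mathrm{p}}{\mcO}^\bullet_\sfT(U)$ on $\Hom^\bullet_{\sfK(U)}(A_U,B_U)$ sectionwise via the induced action of $\sfT(U)$ on $\sfK(U)$, check compatibility with the restriction maps, and let sheafification carry the module structure over to $[A,B]^\#$. The only cosmetic difference is that you post-compose with the map induced by $r$ after applying $f$, whereas the paper uses the single bifunctorial morphism $f\ast g$ sandwiched between the unitor and associator isomorphisms; these agree up to the coherence (and $\epsilon_u$-sign) data you already flag, so the content is the same.
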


\begin{proof}
	Fix $(A,B)\in(\sfK^c)^\op \times \sfK$ and a quasi-compact open subset $U\subseteq \Spc(\sfT^c).$ Consider the sections $\prescript{}{\mathrm{p}}{[A,B]}(U)=\Hom^\bullet_{\sfK(U)}(A_U,B_U).$ We define a graded module structure via the action of $\sfT$ on $\sfK$. As noted this action restricts to an action of $\sfT(U)$ on $\sfK(U)$. Fix $f \in \prescript{}{\mathrm{p}}{\mcO^\bullet_\sfT}(U)$ with $\deg(f)=i$ and $g \in \prescript{}{\mathrm{p}}{[A,B]}(U)$ with $\deg(g)=j$. Explicitly $f \in \Hom_{\sfT(U)}(\unit_U,(u^i \otimes \unit)_U)$ and $g \in \Hom_{\sfK(U)}(A_U,(u^j \ast B)_U).$ Define $f \cdot g$ to be the composite
	\[A_U \xrightarrow{\simeq} \unit_U\ast A_U \xrightarrow{f\ast g}u^i_U \ast (u^j_U \ast B_U)\xrightarrow{\simeq}u^{i+j}_U \ast B_U.\]
	This multiplication is compatible with the addition on morphisms, and the grading. Together with the compatibility of the action with the restriction maps this completes the proof.     
\end{proof}

We now compute the stalks of these sheaves. To do this we require that $\Spc(\sfT^c)$ be a noetherian topological space, and that $\sfT$ has a well-behaved theory of homotopy colimits by appealing to some higher structure such as some flavour of model, derivator, $\infty$-category (see \cite[6.5]{StevensonActions} for more discussion). 

\begin{lem}
	\label{lemactionstalk}
	For a prime $\mcP \in \Spc{\sfT^c}$ the stalk of the sheaf $[A,B]^\#$ at $\mcP$ is given by
	\[[A,B]^\#_\mcP=\Hom^\bullet_{\sfK(\mcP)}(A_\mcP,B_\mcP)\]
	where 
	\[\Hom^\bullet_{\sfK(\mcP)}(A_\mcP,B_\mcP)=\bigoplus_{i \in \ZZ}\Hom_{\sfK(\mcP)}(A_\mcP,u^i_\mcP \ast B_\mcP).\]
\end{lem}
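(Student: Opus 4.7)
The plan is to mimic the strategy used in Lemma \ref{generalstalk}, but in the enriched setting of an action on $\sfK$ and with the twisting by $u^i$ incorporated. By definition of sheafification, the stalk of $[A,B]^\#$ at $\mcP$ is the filtered colimit
\[
[A,B]^\#_\mcP \;\cong\; \colim_{U\in\mcU}\,\prescript{}{\mathrm{p}}{[A,B]}(U) \;=\; \colim_{U\in\mcU}\Hom^\bullet_{\sfK(U)}(A_U,B_U),
\]
where $\mcU$ is the filtered system of quasi-compact open subsets of $\Spc(\sfT^c)$ containing $\mcP$. Since filtered colimits of abelian groups commute with the direct sum over $i\in\ZZ$ that defines $\Hom^\bullet$, the problem reduces to establishing, for each fixed $i\in\ZZ$, a natural isomorphism
\[
\colim_{U\in\mcU}\Hom_{\sfK(U)}(A_U,(u^{\otimes i}\ast B)_U) \;\cong\; \Hom_{\sfK(\mcP)}(A_\mcP,(u^{\otimes i}\ast B)_\mcP).
\]

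First I would compare the subcategories being killed. Writing $Z_U$ for the closed complement of $U$, the localisation defining $\sfK(U)$ quotients by $\sfT_{Z_U}\ast\sfK=\loc(\sfT^c_{Z_U})\ast\sfK$, while $\sfK(\mcP)$ quotients by $\loc^\otimes(\mcP)\ast\sfK$. By Lemma \ref{techstuff}(4) we have $\mcP=\bigcup_{U\in\mcU}\sfT^c_{Z_U}$, and taking localising closures (using the hypothesis that $\sfT$ has a reasonable higher structure so that such unions are well-behaved) gives $\loc^\otimes(\mcP)=\bigcup_{U\in\mcU}\sfT_{Z_U}$, and therefore
\[
\loc^\otimes(\mcP)\ast\sfK \;=\; \bigcup_{U\in\mcU}\sfT_{Z_U}\ast\sfK.
\]
This identifies $\sfK(\mcP)$ with the filtered 2-colimit of the $\sfK(U)$ along the natural localisation functors $\sfK(U)\to \sfK(V)$ for $V\subseteq U$, so in particular $A_\mcP$ and $(u^{\otimes i}\ast B)_\mcP$ are the images of $A$ and $u^{\otimes i}\ast B$ under the colimit cocone.

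Next, since $A$ is compact in $\sfK$ (so that $A_U$ remains compact in $\sfK(U)$ by Theorem \ref{millerneeman}), morphisms out of $A_\mcP$ in the filtered colimit category $\sfK(\mcP)$ are computed as the colimit of morphism groups in the $\sfK(U)$: every roof in $\sfK(\mcP)$ can be represented at some finite stage $U\in\mcU$, and two roofs agree in $\sfK(\mcP)$ only if they already agree after passing to some smaller $V\subseteq U$ in $\mcU$. This yields exactly the isomorphism displayed in the previous paragraph, for each $i$, and reassembling the direct sum gives the claimed formula.

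The main obstacle is justifying the filtered-colimit description of $\sfK(\mcP)$ rigorously: the naive union of Verdier quotients need not coincide with $\sfK/\loc^\otimes(\mcP)\ast\sfK$ at the level of triangulated categories, and this is precisely where one must appeal to the higher categorical enhancement alluded to in the paragraph preceding the lemma (so that filtered colimits of the underlying stable presentable structures commute with the relevant Bousfield localisations). Once that enhancement is in place, compactness of $A$ guarantees that hom-sets convert the 2-colimit into a 1-colimit of abelian groups, and the remainder of the argument is bookkeeping with the induced $\sfT(U)$-action on $\sfK(U)$.
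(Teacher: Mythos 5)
Your opening reduction (stalk $=$ filtered colimit of sections, then split off each twist $u^{\otimes i}$) matches the paper, but the central step of your argument has a genuine gap. You claim that $\loc^{\otimes}(\mcP)\ast\sfK=\bigcup_{U\in\mcU}\sfT_{Z_U}\ast\sfK$ and hence that $\sfK(\mcP)$ is a filtered colimit of the $\sfK(U)$ in which every roof, and every equality of roofs, is witnessed at some finite stage $U$. The compact-level statement $\mcP=\bigcup_{U}\sfT^c_{Z_U}$ (Lemma \ref{techstuff}(4)) does hold, and it is exactly what makes the roof argument work in Lemma \ref{generalstalk}, because a filtered union of \emph{thick} subcategories is again thick. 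But a filtered union of \emph{localising} subcategories is not localising (it is not closed under arbitrary coproducts), so $\bigcup_U\sfT_{Z_U}\ast\sfK$ is a proper subcategory of the kernel $\loc^{\otimes}(\mcP)\ast\sfK$ of the quotient defining $\sfK(\mcP)$: a roof $A\leftarrow C\to u^{\otimes i}\ast B$ in $\sfK(\mcP)$ has its denominator's cone only in $\loc^{\otimes}(\mcP)\ast\sfK$, and there is no reason it lies in any single $\sfT_{Z_U}\ast\sfK$. Appealing to a higher enhancement so that "such unions are well-behaved" does not repair this; the identification of $\sfK(\mcP)$ with the colimit of the $\sfK(U)$, in a form strong enough to compute homs out of a compact object as a colimit of homs, is precisely the mathematical content of the lemma, and your proposal defers it rather than proves it.

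The paper closes this gap by a different route: using the smashing localisation adjunctions it rewrites the sections as $\Hom^{\bullet}_{\sfK}(A,L_{\mcV_U}B)$, uses compactness of $A$ to move the filtered colimit inside the hom, and then proves the key identification $\hocolim_{i}L_{\mcV_i}B\cong L_{Z(\mcP)}B$ by testing the cone against the functors $\Gamma_{\mcQ}$ and invoking the local-to-global principle of Stevenson. This is where the standing hypotheses stated just before the lemma (noetherian Balmer spectrum and a well-behaved theory of homotopy colimits) are actually used; your argument never invokes either, which is a sign that the essential input is missing. To fix your proof you would need either to reproduce this hocolim identification (or an equivalent statement, e.g.\ that $\hocolim L_{\mcV_U}B$ is $\Gamma_{Z(\mcP)}$-local, checked against a set of compact generators of $\loc^{\otimes}(\mcP)\ast\sfK$), or to supply a genuine theorem about filtered colimits of compactly generated enhancements and verify its hypotheses here; as written, the proposal asserts the conclusion at the point where the work has to be done.
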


\begin{proof}
	We use the identification $\Gamma_\mcV\sfK=\Gamma_\mcV\sfT \ast \sfK = \sfT_\mcV \ast \sfK$ from \cite[4.11]{StevensonActions}, where $\mcV$ is the complement of a quasi-compact open subset $U$. Now consider all quasi-compact opens $U_i$ containing $\mcP$, with complements $\mcV_i$. We first note the natural morphisms induced by inclusions:
	\[
	\begin{tikzcd}
		L_{\mcV_i}B \arrow[rr] \arrow[rd] &                              & L_{\mcZ(\mcP)} B \\
		& L_{\mcV_j}B \arrow[ru] &               
	\end{tikzcd}\]
	which induces a map between $\hocolim L_{\mcV_i}B$ and $L_{Z(\mcP)}B$. This map completes to a triangle 
	\[\hocolim L_{\mcV_i}B \to L_{Z(\mcP)} B \to Z \to \Sigma\hocolim L_{\mcV_i}B.\]
	For each $\mcQ \not\in \mcZ(\mcP)$ we have
	\[\Gamma_\mcQ L_{Z(\mcP)}B\cong \Gamma_{\mcQ}B.\]
	Applying this functor to the above gives a triangle
	\[\Gamma_{\mcQ}\hocolim L_{\mcV_i}B \to \Gamma_{\mcQ}B \to \Gamma_{\mcQ}Z \to \Gamma_{\mcQ}\Sigma\hocolim L_{\mcV_i}B.\]
	Given that $\Gamma_{\mcQ}\hocolim L_{\mcV_i}B \cong \Gamma_{\mcQ}B$ the first morphism in the triangle is an isomorphism and so $\Gamma_{\mcQ}Z \cong 0.$ As $\sfT$ has noetherian spectrum and we assumed a good theory of homotopy colimits this forces $Z\cong 0$ by the local-to-global principle \cite[6.8]{StevensonActions}. Therefore $\hocolim L_{\mcV_i}B \cong L_{Z(\mcP)}B.$ Hence
	\begin{align*}
		[A,B]^\#_\mcP &\cong \colim \Hom^\bullet_{\sfK(U)}(A_U,B_U),\\
		&\cong \colim\Hom^\bullet_\sfK(A,L_{\mcV_i}B)\text{ by adjunction,}\\
		&\cong \Hom^\bullet_\sfK(A,\hocolim L_{\mcV_i}B)\text{ by compactness,}\\
		&\cong \Hom^\bullet_\sfK(A,L_{Z(\mcP)}B)\text{ by the isomorphism in the previous paragraph,}\\
		&\cong \Hom^\bullet_{\sfK(\mcP)}(A_\mcP,B_\mcP)\text{ by adjunction.}
	\end{align*}
\end{proof}

\begin{lem}
	For all $A \in \sfK^c$ the functor $[A,-]^\#$ is homological and coproduct preserving.
\end{lem}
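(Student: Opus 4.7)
The plan is to verify both properties at the level of the defining presheaf $\prescript{}{\mathrm{p}}{[A,-]} : \sfK \to \qbpresheaf_{\gr\Ab}(\Spc(\sfT^c))$ and then transport them through sheafification, which (being a left adjoint between abelian categories of presheaves and sheaves) is exact and preserves small colimits.

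First I would address the homological property. Fix a quasi-compact open $U \subseteq \Spc(\sfT^c)$. The localisation functor $q_U : \sfK \to \sfK(U) = \sfK/(\sfT_Z \ast \sfK)$ is a Verdier quotient and hence triangulated, so it sends every distinguished triangle in $\sfK$ to one in $\sfK(U)$. The graded Hom functor $\Hom^{\bullet}_{\sfK(U)}(A_U, -)$ is a direct sum of ordinary Hom-functors (each homological in the second variable), hence is itself homological. Thus, at each $U$, the composite $\prescript{}{\mathrm{p}}{[A,-]}(U)$ turns triangles in $\sfK$ into long exact sequences of graded abelian groups. Since this is natural in $U$, we obtain an exact sequence of presheaves, which exact sheafification preserves, giving the desired long exact sequence of sheaves.

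Next, for the coproduct property, consider a family $\{B_i\}$ in $\sfK$ admitting a coproduct. Since sheafification commutes with coproducts, it suffices to produce an isomorphism $\prescript{}{\mathrm{p}}{[A, \coprod_i B_i]} \cong \bigoplus_i \prescript{}{\mathrm{p}}{[A, B_i]}$ of presheaves. Sectionwise at $U$, this decomposes into two assertions: (i) $q_U$ preserves coproducts, so that $(\coprod_i B_i)_U \cong \coprod_i (B_i)_U$ in $\sfK(U)$; and (ii) $A_U = q_U(A)$ is compact in $\sfK(U)$, so that $\Hom^{\bullet}_{\sfK(U)}(A_U, -)$ sends coproducts to direct sums. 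For (i), Theorem \ref{millerneeman} ensures $\sfT_Z = \loc(\sfT^c_Z)$ is a smashing tensor ideal of $\sfT$; combined with the biexactness of the action in coproducts, the setup of \cite{StevensonActions} promotes this to the statement that $\sfK \to \sfK(U)$ is itself a smashing localisation whose right adjoint is coproduct-preserving. Statement (ii) is then the standard fact that smashing localisations preserve compactness: for any family $\{X_j\}$ in $\sfK(U)$, $\Hom_{\sfK(U)}(q_U(A), \coprod_j X_j) \cong \Hom_\sfK(A, \coprod_j j_U(X_j)) \cong \bigoplus_j \Hom_{\sfK(U)}(q_U(A), X_j)$, using compactness of $A$ and that the right adjoint $j_U$ commutes with coproducts.

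The main obstacle, and the only step which is not purely formal, is establishing that $\sfK \to \sfK(U)$ is smashing so that $A_U$ remains compact; once this is in hand, the remainder reduces to routine manipulations with Verdier quotients, homological Hom-functors, and exactness of sheafification.
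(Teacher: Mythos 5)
Your proposal is correct and takes essentially the same route as the paper: homologicality is obtained from the presheaf-level Hom functors together with exactness of sheafification, and coproduct preservation from the smashing nature of the localisation defining $\sfK(U)$ (so that $A_U$ remains compact and $\Hom^{\bullet}_{\sfK(U)}(A_U,-)$ commutes with coproducts) combined with sheafification being a left adjoint. The only difference is cosmetic: you spell out the standard adjunction argument for why smashing localisations preserve compactness, which the paper simply cites from \cite{StevensonActions}.
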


\begin{proof}
	Observe that as $\Hom(A,-)$ is homological and the associated sheaf functor is exact, the functor $[A,-]^\#$ must be homological. 
	To prove coproduct preservation fix a quasi-compact open basis set $U \subseteq \Spc(\sfT^c)$ and some set-indexed collection of objects $(B_\lambda)_{\lambda \in \Lambda}$ in $\sfK$. As the localisation sequence used in the construction of $\sfK(U)$ is smashing \cite{StevensonActions} the canonical functor $\sfK \to \sfK(U)$ preserves compact objects. Therefore there is an isomorphism
	\[\Hom^{\bullet}_{\sfK(U)}(A,\coprod_{\lambda \in \Lambda}B_\lambda)\cong \bigoplus_{\lambda \in \Lambda}\Hom^{\bullet}_{\sfK(U)}(A,B_\lambda)\]
	and so it follows there is an isomorphism
	\[\prescript{}{\mathrm{p}}{[}A,\CMcoprod_{\lambda \in \Lambda}B_\lambda]\cong \bigoplus_{\lambda \in \Lambda}\prescript{}{\mathrm{p}}{[}A,B_\lambda]\]
	and so 
	\[
	[A,\CMcoprod_{\lambda \in \Lambda}B_{\lambda}]^{\#} \cong
	(\bigoplus_{\lambda \in \Lambda}\prescript{}{\mathrm{p}}{[}A,B_\lambda])^\#.
	\]
	
	Sheafification of a presheaf is a left adjoint and so preserves coproducts. Therefore 
	\begin{align*}
		[A,\CMcoprod_{\lambda \in \Lambda}B_\lambda]^\#&\cong \bigoplus_{\lambda \in \Lambda}\prescript{}{\mathrm{p}}{[}A,B_\lambda]^\#\\
		&\cong\bigoplus_{\lambda \in \Lambda}[A,B_\lambda]^\#.
	\end{align*}
\end{proof}

We organise this information into the following definition.

\begin{defn}
	Fix an action of a big tt-category $\sfT$ on a triangulated category $\sfK$. Let $u$ be an invertible object in $\sfT^c$ and $A$ an invertible object in $\sfK^c$. For an object $B \in \sfK$ we define the $u$\emph{-twisted } $A$\emph{-support} of $B$ by
	\[\supp^\bullet(B ,A)=\{\mcP \in \Spc(\sfT^c) \ \vert \ [A,B]^{\#}_\mcP \neq 0 \}.\]
	When considering the untwisted case we write $\supp(B,A)$. 
\end{defn}

We note the following basic properties:

\begin{prop}
	\label{suppbasic}
	Given the notion of support defined above we have:
	\begin{enumerate}
		\item $\supp^\bullet(0,A) = \varnothing$.
		\item Given a triangle $B\to C \to D \to \Sigma B$ in $\sfK$ we have $\supp^\bullet(C,A)\subseteq \supp^\bullet(B,A)\cup\supp^\bullet(D,A)$.
		\item $\supp^\bullet(B,A) = \supp^\bullet(u\ast B,A),$
		\item $\supp^\bullet(B, u^\vee \ast A)=\supp^\bullet(B,A)$, where $u^\vee$ is the dual of $u$.
		\item $\supp^\bullet(\bigoplus_{\lambda \in \Lambda}B_\lambda,A)=\bigcup_{\lambda \in \Lambda}\supp^\bullet(B,A)$,
		\item $\supp^\bullet(B,\bigoplus_{\lambda \in \Lambda}A_\lambda)=\bigcup_{\lambda \in \Lambda}\supp^\bullet(B,A_\lambda)$
	\end{enumerate}
\end{prop}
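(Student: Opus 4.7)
The plan is to reduce each of the six statements to a computation on stalks using Lemma \ref{lemactionstalk}, which identifies $[A,B]^\#_\mcP$ with $\Hom^\bullet_{\sfK(\mcP)}(A_\mcP,B_\mcP)$. Parts (1) and (2) should be essentially immediate: vanishing of $\Hom(A_\mcP,0)$ gives (1), while the preceding lemma tells us that $[A,-]^\#$ is homological, so a triangle $B\to C\to D\to \Sigma B$ induces a long exact sequence of stalks in which the middle term must vanish whenever the two flanking terms do; this yields (2).

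For (3) and (4) I would exploit the invertibility of $u$. On stalks, multiplication by $u$ gives an isomorphism of sets $\{u^i_\mcP \ast B_\mcP\}_{i\in\ZZ}\to\{u^{i+1}_\mcP\ast B_\mcP\}_{i\in\ZZ}$, so
\[
[A,u\ast B]^\#_\mcP \;\cong\; \bigoplus_{i\in\ZZ}\Hom_{\sfK(\mcP)}(A_\mcP,u^{i+1}_\mcP\ast B_\mcP)
\]
is merely a regrading of $[A,B]^\#_\mcP$, giving (3). For (4), since $u$ is invertible in $\sfT^c$ (hence in particular rigid) we have a natural isomorphism $\Hom(u^\vee \ast A, u^i \ast B)\cong \Hom(A, u\otimes u^i \ast B)=\Hom(A,u^{i+1}\ast B)$, so again the stalk is just a regrading of $[A,B]^\#_\mcP$.

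Parts (5) and (6) address coproducts in each variable separately. For (5) I would invoke the preceding lemma that $[A,-]^\#$ preserves coproducts, so $[A,\bigoplus_\lambda B_\lambda]^\#\cong\bigoplus_\lambda [A,B_\lambda]^\#$; since sheafification and stalks commute with direct sums, the stalk vanishes precisely when every summand does, yielding the union. For (6) the situation is contravariant, so I would work directly at stalks, where
\[
[\textstyle\bigoplus_\lambda A_\lambda,B]^\#_\mcP \;\cong\; \Hom^\bullet_{\sfK(\mcP)}(\textstyle\bigoplus_\lambda (A_\lambda)_\mcP,B_\mcP) \;\cong\; \prod_\lambda\Hom^\bullet_{\sfK(\mcP)}((A_\lambda)_\mcP,B_\mcP),
\]
and a product of graded abelian groups vanishes iff every factor does; taking complements produces the required union.

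The main subtlety I expect is (6): the first variable of $[-,-]^\#$ is constrained to compact objects, so one must either restrict to finite direct sums (where compactness is automatic and the product above collapses to a direct sum) or verify that the displayed product-of-stalks identification is what one wants even when $\bigoplus_\lambda A_\lambda$ fails to be compact. Beyond this bookkeeping, the only other point to verify carefully is the regrading argument in (3) and (4), where one should confirm that the indexing shift really does preserve vanishing of the whole graded stalk.
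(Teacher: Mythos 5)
Your proposal follows essentially the same route as the paper's proof: reduce everything to stalks via Lemma \ref{lemactionstalk}, use that $[A,-]^\#$ is homological for (2), a regrading of the twisted hom-groups for (3) and (4), compactness of $A$ (equivalently coproduct preservation of $[A,-]^\#$) for (5), and Hom out of a coproduct for (6). The subtlety you flag in (6) — that the stalk formula is only justified for compact first argument, so infinite coproducts need either a restriction to finite sums or a separate check that the product-of-stalks identification survives (filtered colimits do not commute with infinite products) — is a real point that the paper itself glosses over with ``almost identical to the above,'' so if anything you are being more careful than the printed argument.
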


\begin{proof}
	\begin{enumerate}
		\item As $[A,0]^\#=0$, all of the stalks vanish at each $\mcP$ and so the support is empty as required.
		\item Let $\mcP \in \supp^\bullet(C,A)$. Applying the homological functor $[A,-]^\#$ followed by the stalk functor at $\mcP$ gives a long exact sequence
		\[\cdots \to [A,B]^{\#}_\mcP\to [A,C]^{\#}_\mcP\to [A,D]^{\#}_\mcP\to [A,\Sigma B]^{\#}_\mcP\to\cdots\]
		If $\mcP \not\in \supp^\bullet(B,A)\cup\supp^\bullet(D,A)$ then as the sequence is exact we would have $[A,C]^{\#}_\mcP=0$, a contradiction. Therefore $\mcP \in \supp^\bullet(B,A)\cup\supp^\bullet(D,A)$ and the result holds.
		\item We have the following chain of if and only ifs:
		\begin{align*}
			\mcP \in \supp^\bullet(B,A) &\Leftrightarrow \Hom_{\sfK(\mcP)}(A,u^{\otimes i}\ast B) \neq 0 \text{ for some }i \in \ZZ,\\
			&\Leftrightarrow \Hom_{\sfK(\mcP)}(A,u^{\otimes i-1}\ast(u\ast B)) \neq 0 \text{ for some }i \in \ZZ,\\
			&\Leftrightarrow \mcP \in \supp^\bullet(u \ast B, A).
		\end{align*} 
		
		\item Similarly to the above we have the following chain of if and only ifs:
		\begin{align*}
			\mcP \in \supp^\bullet(B,u^\vee \ast A) &\Leftrightarrow \Hom_{\sfK(\mcP)}(u^\vee \ast A,u^{\otimes i}\ast B) \neq 0 \text{ for some }i \in \ZZ,\\
			&\Leftrightarrow \Hom_{\sfK(\mcP)}(A,u^\vee \ast u^{\otimes i}\ast B) \neq 0 \text{ for some }i \in \ZZ,\\
			&\Leftrightarrow \Hom_{\sfK(\mcP)}(A,u^{\otimes i-1}\ast B) \neq 0 \text{ for some }i \in \ZZ,\\
			&\Leftrightarrow \mcP \in \supp^\bullet(B, A).
		\end{align*}
		\item As $A$ is compact we have an isomorphism 
		\[\Hom^\bullet_{\sfK(\mcP)}(A,\bigoplus_{\lambda \in \Lambda}B_\lambda) \cong \bigoplus_{\lambda \in \Lambda}\Hom^\bullet_{\sfK(\mcP)}(A,B_\lambda).\]
		Using this gives us another chain of implications:
		\begin{align*}
			\mcP \in \supp^\bullet(\bigoplus_{\lambda \in \Lambda}B_\lambda, A) &\Leftrightarrow \Hom^\bullet_{\sfK(\mcP)}(A,\bigoplus_{\lambda \in \Lambda}B_\lambda)\neq 0\\
			&\Leftrightarrow \bigoplus_{\lambda \in \Lambda}\Hom^\bullet_{\sfK(\mcP)}(A,B_\lambda) \neq 0\\
			&\Leftrightarrow\mcP \in \supp(B_\lambda, A) \text{ for some }\lambda \in \Lambda\\
			&\Leftrightarrow\mcP \in \bigcup_{\lambda \in \Lambda}\supp^\bullet(B_\lambda, A).\\  
		\end{align*}
		\item Almost identical to the above, using properties of the hom-functor.
	\end{enumerate}
\end{proof}

\begin{rem}
	\label{untwisteddecomp}
	Consider the twisted support $\supp^\bullet(B, A)$ with respect to an invertible object $u$. We can decompose this twisted support into a union of untwisted supports. By definition $\Hom_{\sfK(\mcP)}^\bullet (A, B) \neq 0$ if and only if there exists $i \in \ZZ$ such that $\Hom_{\sfK(\mcP)}(A, u^{\otimes i}\ast B) \neq 0$. Therefore $[A,B]^{\#}_\mcP \neq 0$ if and only if there exists $i \in \ZZ$ such that $[A,u^{\otimes i}\ast B]^{\#}_\mcP \neq 0$. Note that the first sheaf if twisted with respect to $u$ while the second is untwisted. Taking the union over all $i \in \ZZ$ we obtain
	\[\supp^\bullet(B,A)=\bigcup_{i \in \ZZ}\supp(u^{\otimes i}\ast B, A).\]  
\end{rem}

\section{Comparison of support theories}

We can now compare our notion of support with the usual definitions of support found in tt-geometry. 

\begin{defn}
	Let $\sfT$ act on $\sfK$. Let $B \in \sfK$ be an object with non-empty support. We say an $B$ is \emph{locally generated} if for each $\mcP\in \supp(B)$ there exists a compact object $A \in \sfK$ such that $B \in \loc(A_\mcP)$,  realised as a full subcategory of the Verdier quotient $\sfK/\loc(\mcP)$. We say a function $\phi: \supp(B) \to \sfK^c$ is a \emph{generating function} if $B$ is locally generated by the collection of compact objects $\{\phi(\mcP)\ \vert \ \mcP\in \supp(B)\}$.
\end{defn}

\begin{lem}
	\label{ttinsheaf}
	Let $\sfT$ act on $\sfK$. Let $B$ be a locally generated object of $\sfK$ with generating function $\phi$. Then there exists a function $\psi: \supp(B) \to \ZZ$ such that
	\[\supp(B) \subseteq \bigcup_{\mcP\in \supp(B)} \supp^\bullet(\Sigma^{\psi(\mcP)} B,\phi(\mcP)). \]
	In particular when $u=\Sigma \unit$ we have
	\[\supp(B) \subseteq \bigcup_{\mcP\in \supp(B)}\supp^\bullet(B,\phi(\mcP)).\] 
\end{lem}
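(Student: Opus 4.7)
The plan is to reduce the question, prime by prime, to the stalk category $\sfK(\mcP)$ and then use the local generation hypothesis to extract a non-vanishing graded hom in that stalk, which by Lemma 4.6 records exactly the non-vanishing of the stalk of the associated sheaf. Fix $\mcP \in \supp(B)$. The first step is to observe that the non-vanishing $\Gamma_\mcP B \neq 0$ transfers to a non-vanishing of $B$ in the stalk category $\sfK(\mcP)$. Since $\Gamma_\mcP \simeq \Gamma_{\mcV(\mcP)}L_{\mcZ(\mcP)}$, one has $L_{\mcZ(\mcP)} B \neq 0$; under the identification of $\sfK(\mcP)=\sfK/(\loc^\otimes(\mcP)\ast\sfK)$ with $L_{\mcZ(\mcP)}\sfK$ (using $\loc^\otimes(\mcP)=\Gamma_{\mcZ(\mcP)}\sfT$ via $\mcP=\sfT^c_{\mcZ(\mcP)}$ from Lemma~3.15) this gives $B_\mcP\neq 0$ in $\sfK(\mcP)$.

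The second step is to promote local generation into a non-vanishing hom. By hypothesis $B_\mcP \in \loc(\phi(\mcP)_\mcP)$ inside $\sfK(\mcP)$. The orthogonality argument of Lemma~2.13 goes through verbatim in this localizing setting: the left orthogonal
\[
\prescript{\perp}{}{B_\mcP}\;=\;\{\,Y \in \sfK(\mcP)\ \vert\ \Hom_{\sfK(\mcP)}(Y,\Sigma^j B_\mcP)=0\ \forall j\in\ZZ\,\}
\]
is thick, and since $\Hom(-,\Sigma^j B_\mcP)$ converts coproducts to products it is also closed under coproducts, hence localizing. If it contained $\phi(\mcP)_\mcP$ it would contain all of $\loc(\phi(\mcP)_\mcP)\ni B_\mcP$, forcing $\Hom(B_\mcP,B_\mcP)=0$ and contradicting $B_\mcP\neq 0$. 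Hence there exists some $\psi(\mcP)\in\ZZ$ with $\Hom_{\sfK(\mcP)}(\phi(\mcP)_\mcP,\Sigma^{\psi(\mcP)}B_\mcP)\neq 0$; this choice defines the desired function $\psi$.

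The third step is to translate back to the sheaf. Lemma~4.6 identifies
\[
[\phi(\mcP),\Sigma^{\psi(\mcP)}B]^\#_\mcP \;\cong\; \bigoplus_{i\in\ZZ}\Hom_{\sfK(\mcP)}\!\bigl(\phi(\mcP)_\mcP,\,u^{\otimes i}\ast\Sigma^{\psi(\mcP)}B_\mcP\bigr),
\]
and the $i=0$ summand is the non-zero group found in step two. Thus $\mcP \in \supp^\bullet(\Sigma^{\psi(\mcP)}B,\phi(\mcP))$, and taking the union over $\mcP\in\supp(B)$ yields the claimed containment. For the final "in particular" clause, when $u=\Sigma\unit$ we have $u^{\otimes n}\ast B\simeq \Sigma^n B$, so iterating part (c) of Proposition~4.9 gives $\supp^\bullet(\Sigma^n B,A)=\supp^\bullet(B,A)$ for every $n\in\ZZ$, which reduces the first containment to the second.

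The only genuinely delicate point is the second step: one must confirm that Lemma~2.13 applies to $\loc(\phi(\mcP)_\mcP)$ inside $\sfK(\mcP)$. It does, because the orthogonality argument uses only that the left orthogonal is closed under coproducts, which follows from $\Hom$ converting coproducts to products and does not require $\phi(\mcP)_\mcP$ to remain compact in the stalk. Everything else is bookkeeping using the stalk formula of Lemma~4.6 and the support properties in Proposition~4.9.
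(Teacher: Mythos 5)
Your proof is correct and takes essentially the same route as the paper's: localise at each $\mcP\in\supp(B)$, note $B_\mcP\neq 0$, use local generation together with the orthogonality argument of Lemma~\ref{lemthickzero} (which, as you observe, needs no compactness in the localising setting and is covered by the remark following that lemma) to produce a non-vanishing $\Hom_{\sfK(\mcP)}(\phi(\mcP)_\mcP,\Sigma^{\psi(\mcP)}B_\mcP)$, identify this with a summand of the stalk via Lemma~\ref{lemactionstalk}, and handle the $u=\Sigma\unit$ case by Proposition~\ref{suppbasic}. The only blemishes are cosmetic: your cross-reference numbers do not match the paper's labelling (the stalk formula is Lemma~\ref{lemactionstalk}, the orthogonality lemma is Lemma~\ref{lemthickzero}, the support properties are Proposition~\ref{suppbasic}, and the identification $\mcP=\sfT^c_{\mcZ(\mcP)}$ is Lemma~\ref{techstuff}), but the content cited is the right one in each case.
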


\begin{proof}
	The result is trivial if the support of $B$ is empty. Assuming that the support is non-empty choose a prime ideal $\mcP \in \supp(B)$. Note by the adjunction in the associated localisation sequence, the localisation $B_\mcP\in \sfK/\loc(\mcP)$ is non-zero. As $B_\mcP\in \loc(\phi(\mcP))$, by Lemma \ref{lemthickzero} there exists $i \in \ZZ$ such that $\Hom_{\loc(\phi(\mcP))}(\phi(\mcP)_\mcP, \Sigma^i B_\mcP)\neq 0.$ As $\loc(\phi(\mcP))$ is full we have $\Hom_{\loc(\phi(\mcP))}(\phi(\mcP)_\mcP, \Sigma^i B_\mcP)=\Hom_{\sfK(\mcP)}(\phi(\mcP)_\mcP, \Sigma^i B_\mcP)$ and so $\Hom_{\sfK(\mcP)}(\phi(\mcP)_\mcP, \Sigma^i B_\mcP) \neq 0.$ Therefore $[\phi(\mcP),\Sigma^i B]^\#_\mcP = \Hom_{\sfK(\mcP)}(\phi(\mcP)_\mcP, \Sigma^i B_\mcP)\neq 0,$ and therefore $\mcP \in \supp^\bullet(\Sigma^i B, \phi(\mcP)).$ Defining $\psi(\mcP)=i$, and then taking the union over all $\mcP \in \supp(B)$ completes the proof. The second statement follows immediately from Proposition \ref{suppbasic}.   
\end{proof}

\begin{rem}
	\label{gencon}
	If $\sfK=\loc(A)$ for some compact object $A$ then for any object $B$ the constant function $\phi:\supp(B) \to \{A\}$ is a generating function. Then the previous lemma can be simplified by replacing all instances of $\phi(\mcP)$ with $A$. In particular when $u=\Sigma \unit$ we have
	\[\supp(B) \subseteq \supp^\bullet(B, A).\]
\end{rem}

A natural place to compare support theories is for the case of a tt-category $\sfT$ acting on itself via the tensor product. The support theory defined by Stevenson in this setting agrees with that of Balmer when restricted to compact objects of $\sfT$. The following lemma pushes our theory of support in the same direction.

\begin{lem}
	\label{sheafintt}
	For any $A,B \in \sfT^c$ we have 
	\[\supp^\bullet(B,A) \subseteq \supp B.\]
\end{lem}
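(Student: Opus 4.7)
The plan is to prove the contrapositive: show that if $\mcP \notin \supp B$, then $[A,B]^{\#}_\mcP = 0$, so $\mcP \notin \supp^\bullet(B,A)$.

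First, suppose $\mcP \notin \supp B$. Unwinding the definition of the Balmer support, this means $B \in \mcP$. Since $\mcP$ is a thick $\otimes$-ideal of $\sfT^c$, Theorem \ref{millerneeman} says that $\loc(\mcP)$ is a smashing tensor ideal of $\sfT$ with $\loc(\mcP) \cap \sfT^c = \mcP$. In particular $B \in \loc(\mcP)$, so the image $B_\mcP$ of $B$ under the localisation $\sfT \to \sfT(\mcP) = \sfT/\loc(\mcP)$ is zero.

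Now the self-action of $\sfT$ on itself is given by $\otimes$, which descends to the tensor product on the quotient $\sfT(\mcP)$. Since $B_\mcP = 0$, we have $u^{\otimes i} \otimes B_\mcP \cong (u^{\otimes i} \otimes B)_\mcP = 0$ for every $i \in \ZZ$, and hence
\[
\Hom_{\sfT(\mcP)}\bigl(A_\mcP,\, u^{\otimes i} \otimes B_\mcP\bigr) = 0
\]
for all $i$. Summing over $i$ and applying the stalk formula from Lemma \ref{lemactionstalk}, we conclude
\[
[A,B]^{\#}_\mcP \cong \Hom^\bullet_{\sfT(\mcP)}(A_\mcP, B_\mcP) = 0,
\]
so $\mcP \notin \supp^\bullet(B,A)$, as required.

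There is no serious obstacle here; the whole argument is essentially the observation that the stalk of $[A,B]^\#$ at $\mcP$ is computed in $\sfT(\mcP)$, and that $B$ is killed in this quotient precisely because $\mcP = \loc(\mcP) \cap \sfT^c$ contains $B$. The only ingredient worth stating explicitly is the compatibility of the localisation with the tensor product, which guarantees that $B_\mcP = 0$ forces all twists $u^{\otimes i} \otimes B_\mcP$ to vanish.
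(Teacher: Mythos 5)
Your core idea is the right one and coincides with the paper's: $\mcP \notin \supp B$ means exactly $B \in \mcP$, and then $B$ dies in the local category in which the stalk at $\mcP$ is computed, so $[A,B]^\#_\mcP = 0$. (Arguing by contrapositive rather than, as the paper does, by contradiction is immaterial.)

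The one genuine issue is your appeal to Lemma \ref{lemactionstalk}. That stalk formula is established in the paper only under standing hypotheses --- $\Spc(\sfT^c)$ noetherian and a well-behaved theory of homotopy colimits --- since its proof uses the local-to-global principle to identify $\hocolim L_{\mcV_i}B$ with $L_{\mcZ(\mcP)}B$. Lemma \ref{sheafintt} carries no such hypotheses, so as written your argument proves the statement only in that restricted setting. The repair is to use the compactness of $A$ and $B$ (and hence of each twist $u^{\otimes i}\otimes B$): the stalk of the sheafification at $\mcP$ is the filtered colimit of the presheaf sections over quasi-compact opens $U \ni \mcP$, and for compact objects this colimit is identified, via Lemma \ref{generalstalk} together with Theorem \ref{millerneeman}(3) (homs between compacts in $\sfT(U)$ agree with homs in $\sfT^c(U)$), with $\Hom^\bullet_{\sfT^c/\mcP}(A,B)$ --- no noetherian or homotopy-colimit assumptions needed. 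Since $B \in \mcP$ forces $B \cong 0$ in $\sfT^c/\mcP$, this group vanishes and the proof closes; this unconditional identification of the stalk is precisely what the paper's proof uses.
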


\begin{proof}
	Fix $\mcP \in \supp^\bullet(B,A).$ By definition
	\[\Hom^\bullet_{\sfT^c/\mcP}(A,B)\cong [A,B]^\#_\mcP \neq 0.\]
	If $B \in \mcP$ then $B \cong0$ in $\sfT^c/\mcP$. But then the hom-group would be $0$, a contradiction.
\end{proof}

\begin{thm}
	\label{thicksupcompare}
	If $\sfT^c=\thick(A)$ then for all objects $B \in \sfT^c$ there is an equality
	\[\supp B = \bigcup_{i\in\ZZ}\supp^\bullet (\Sigma^i B, A).\]
	When $u \cong \Sigma \unit$ we obtain
	\[\supp B = \supp^\bullet(B,A).\]
\end{thm}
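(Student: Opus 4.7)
The plan is to establish the equality $\supp B = \bigcup_{i \in \ZZ} \supp^\bullet(\Sigma^i B, A)$ as a double containment, and then deduce the $u \cong \Sigma \unit$ statement as an immediate corollary using Proposition \ref{suppbasic}.

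For the forward inclusion $\supp B \subseteq \bigcup_i \supp^\bullet(\Sigma^i B, A)$, I would invoke Lemma \ref{ttinsheaf}. The key observation is that since $\sfT^c = \thick(A)$, every object $B \in \sfT^c$ lies in $\thick(A) \subseteq \loc(A)$; since the localisation functor $q_\mcP \colon \sfT \to \sfT(\mcP)$ is triangulated and coproduct-preserving, it sends $\loc(A)$ into $\loc(A_\mcP)$ inside the Verdier quotient, so in particular $B_\mcP \in \loc(A_\mcP)$. Therefore the constant assignment $\phi(\mcP) = A$ is a generating function for $B$ (in the sense introduced before Lemma \ref{ttinsheaf}), and that lemma produces a function $\psi \colon \supp B \to \ZZ$ witnessing
\[\supp B \subseteq \bigcup_{\mcP \in \supp B}\supp^\bullet(\Sigma^{\psi(\mcP)} B, A) \subseteq \bigcup_{i \in \ZZ}\supp^\bullet(\Sigma^i B, A).\]

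For the reverse inclusion I would apply Lemma \ref{sheafintt} to each suspension of $B$, yielding $\supp^\bullet(\Sigma^i B, A) \subseteq \supp \Sigma^i B$, and then invoke the suspension-invariance of the Balmer support (property (c) of its universal characterisation) to rewrite this as $\supp^\bullet(\Sigma^i B, A) \subseteq \supp B$; taking the union over $i \in \ZZ$ concludes the direction.

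Finally, in the case $u \cong \Sigma \unit$, Proposition \ref{suppbasic}(3) gives $\supp^\bullet(\Sigma B, A) = \supp^\bullet(B, A)$, and iterating this identity in both directions yields $\supp^\bullet(\Sigma^i B, A) = \supp^\bullet(B, A)$ for every $i \in \ZZ$; the union in the general statement then collapses to a single set, giving $\supp B = \supp^\bullet(B, A)$. I do not anticipate a serious obstruction here, as the argument is essentially a packaging of Lemmas \ref{ttinsheaf} and \ref{sheafintt}. The only subtle point is verifying that the constant assignment really is a generating function, which reduces to the observation that triangulated coproduct-preserving functors carry $\loc(A)$ into $\loc(A_\mcP)$ and that thick closure is contained in localising closure.
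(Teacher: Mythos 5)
Your proposal is correct and follows essentially the same route as the paper: the forward inclusion via Lemma \ref{ttinsheaf} with the constant generating function $\phi(\mcP)=A$ (the content of Remark \ref{gencon}, which you simply verify by hand by noting $B_\mcP\in\loc(A_\mcP)$), the reverse inclusion via Lemma \ref{sheafintt} together with suspension-invariance of the Balmer support, and the collapse of the union when $u\cong\Sigma\unit$ via Proposition \ref{suppbasic}(3).
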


\begin{proof}
	As $\sfT^c = \thick(A)$ by Remark \ref{gencon} and Lemma \ref{ttinsheaf} we have
	\[\supp(B) \subseteq \bigcup_{\mcP\in \supp(B)} \supp^\bullet(\Sigma^{\psi(\mcP)} B,A)\subseteq \bigcup_{i\in\ZZ}\supp^\bullet (\Sigma^i B, A),\]
	which gives the first inclusion. For the reverse inclusion we have 
	\begin{align*}
		\supp^\bullet(\Sigma^i B, A) &\subseteq \supp(\Sigma^i B)\text{ by Lemma \ref{sheafintt},}\\
		&\subseteq \supp(B).
	\end{align*}
	And so we obtain the first equality. The second equality follows immediately as when $u\cong \Sigma \unit$ we have $\supp^\bullet(\Sigma^i B,A)=\supp(B,A).$  
\end{proof}

\section{Almost thick preimages of quasi-coherent sheaves}
We show that in general the collection of objects which are quasi-coherent after sheafification is almost a thick subcategory, with a clear potential obstruction. The proof only relies on the formal property of being a coproduct preserving homological functor and so the intermediate steps are simple. 

\begin{convention}
	\label{convfuncB}
	Throughout this section $\bfB:\sfT \to \sfA$ will be a homological, coproduct preserving functor from a big tt-category $\sfT$ to an abelian category $\sfA$. $\sfX$ will be an additive subcategory of $\sfA$. 
	We denote the preimage of $\sfX$ under $\bfB$ by
	\[\bfB^{-1}(\sfX)=\{C \in \sfT\ \vert \ \bfB(C)\in \sfX\}\]
	as a full subcategory of $\sfT.$
\end{convention}

We note the following elementary properties of $\bfB^{-1}(\sfX)$ in the following series of propositions. Proofs are included for completeness. 

\begin{prop}
	\label{basicinverse}
	We have
	\begin{enumerate}
		\item $\bfB^{-1}(\sfX)$ is an additive subcategory of $\sfT.$
		\item If $\sfX$ is closed under summands then $\bfB^{-1}(\sfX)$ is closed under summands.
		\item If $\sfX$ is cocomplete then $\bfB^{-1}(\sfX)$ is cocomplete.
	\end{enumerate}
\end{prop}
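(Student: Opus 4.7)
The plan is to verify each of the three items directly from the definition of $\bfB^{-1}(\sfX)$, leaning on the fact that a homological functor is in particular additive, and on the coproduct preservation hypothesis. Since we are working in a full subcategory cut out by the property $\bfB(C)\in\sfX$, in each case it suffices to check that the relevant construction in $\sfT$ (finite biproduct, direct summand, coproduct) is carried by $\bfB$ to a construction in $\sfA$ of the same type, and that $\sfX$ is closed under that construction.

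For part (1), I would first note that $\bfB(0)\cong 0$, which lies in the additive subcategory $\sfX$, so $0\in\bfB^{-1}(\sfX)$. Then given $C,D\in\bfB^{-1}(\sfX)$, additivity of $\bfB$ (which follows from being homological) gives $\bfB(C\oplus D)\cong\bfB(C)\oplus\bfB(D)$, and additivity of $\sfX$ in $\sfA$ places this in $\sfX$, so $C\oplus D\in\bfB^{-1}(\sfX)$. For part (2), if $C\oplus D\in\bfB^{-1}(\sfX)$ then $\bfB(C)\oplus\bfB(D)\cong\bfB(C\oplus D)\in\sfX$, and the assumption that $\sfX$ is summand-closed forces $\bfB(C),\bfB(D)\in\sfX$, whence $C,D\in\bfB^{-1}(\sfX)$.

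For part (3), given a set-indexed family $\{C_\lambda\}_{\lambda\in\Lambda}$ of objects of $\bfB^{-1}(\sfX)$, the coproduct preservation hypothesis on $\bfB$ yields a natural isomorphism $\bfB(\CMcoprod_\lambda C_\lambda)\cong\CMcoprod_\lambda \bfB(C_\lambda)$ in $\sfA$, and cocompleteness of $\sfX$ (interpreted as closure under set-indexed coproducts) places the right-hand side in $\sfX$, so $\CMcoprod_\lambda C_\lambda\in\bfB^{-1}(\sfX)$.

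There is no substantial obstacle here: everything follows mechanically from the preservation properties of $\bfB$ and the closure properties of $\sfX$. The only point worth a moment's thought is that homologicality of $\bfB$ already packages additivity (hence preservation of finite biproducts), so no extra hypothesis is needed for part (1); the heavier hypothesis of coproduct preservation is reserved for part (3), where infinite coproducts are genuinely at stake.
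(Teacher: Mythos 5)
Your proof is correct and matches the paper's argument essentially verbatim: zero object plus additivity for (1), additivity plus summand-closure for (2), and coproduct preservation plus closure of $\sfX$ under set-indexed coproducts for (3). The paper's proof makes exactly the same moves, including reading cocompleteness as closure under coproducts, so there is nothing to add.
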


\begin{proof}
	\begin{enumerate}
		\item $\sfX$ is additive so contains the zero object $0_\sfA.$ As $\bfB$ is additive we have $\bfB(0_\sfT)=0_\sfA$ and so $0_\sfT \in \bfB^{-1}(\sfX).$ The preimage is  also closed under finite coproducts. Given $A, B \in \bfB^{-1}(X)$ we have $\bfB(A\oplus B)\cong \bfB(A) \oplus \bfB(B) \in \sfX$, from which the result follows.
		\item Consider an object $A \in \bfB^{-1}(\sfX)$ such that $A\cong B\oplus C.$ By additivity $\bfB(A)\cong \bfB(B)\oplus \bfB(C).$ As $\sfX$ is closed under summands both $\bfB(B)$ and $\bfB(C)$ are in $\sfX$ and so $B,C \in \bfB^{-1}(\sfX)$ as required.
		\item Let $Y_\lambda$ be a collection of objects in $\bfB^{-1}(\sfX)$. As $\sfX$ is cocomplete it contains the coproduct $\coprod_{\lambda}\bfB(Y_\lambda)$. $\bfB$ is coproduct preserving so $\coprod_{\lambda}\bfB(Y_\lambda)\cong \bfB(\coprod_{\lambda}Y_\lambda)$. Then $\coprod_{\lambda}Y_\lambda\in\sfX$ as required.
	\end{enumerate}
\end{proof}

\begin{rem}
	Every abelian subcategory is closed under summands, as each summand is the kernel of a projection.
\end{rem}

\begin{defn}
	A category is \emph{wide} if it is abelian and closed under extensions.
\end{defn}

\begin{prop}
	\label{wide}
	Suppose $\sfX$ is a wide subcategory and consider an exact triangle \[X\to Y \to Z \to \Sigma X\] such that all but $Y$ are assumed to be in $\bfB^{-1}(\sfX).$ If $\Sigma^{-1}Z\in \bfB^{-1}(\sfX)$ then $Y \in \bfB^{-1}(\sfX).$
\end{prop}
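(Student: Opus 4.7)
The plan is to apply the homological functor $\bfB$ to the given triangle and chase the resulting long exact sequence in $\sfA$, using that wideness of $\sfX$ gives closure under subobjects, quotients, and extensions inside $\sfA$.

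Since $\bfB$ is homological, the triangle $X \to Y \to Z \to \Sigma X$, rotated to $\Sigma^{-1} Z \to X \to Y \to Z$, yields a four-term exact sequence
\[
\bfB(\Sigma^{-1} Z) \xrightarrow{f} \bfB(X) \xrightarrow{g} \bfB(Y) \xrightarrow{h} \bfB(Z)
\]
in $\sfA$. By hypothesis $\bfB(\Sigma^{-1} Z)$, $\bfB(X)$ and $\bfB(Z)$ all lie in $\sfX$. First I would identify $\im(g)$ with $\coker(f)$ using exactness at $\bfB(X)$; since $\sfX$ is an abelian subcategory of $\sfA$, it is closed under cokernels of morphisms between its own objects, and hence $\im(g) \in \sfX$. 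Similarly, exactness at $\bfB(Z)$ presents $\im(h)$ as a subobject of $\bfB(Z) \in \sfX$, and abelian subcategories are closed under subobjects, so $\im(h) \in \sfX$.

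Exactness at $\bfB(Y)$ now gives a short exact sequence
\[
0 \to \im(g) \to \bfB(Y) \to \im(h) \to 0
\]
in $\sfA$ with both outer terms in $\sfX$. Closure of $\sfX$ under extensions immediately yields $\bfB(Y) \in \sfX$, i.e.\ $Y \in \bfB^{-1}(\sfX)$.

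The main conceptual point is the role of the hypothesis $\Sigma^{-1}Z \in \bfB^{-1}(\sfX)$: without it, $\im(g) = \coker(f)$ would be the cokernel of a morphism from an object of $\sfA$ not known to lie in $\sfX$, and there is no reason for such a cokernel to remain in $\sfX$. With the extra hypothesis, $\coker(f)$ is formed entirely within $\sfX$ and the two-step argument (cokernel on the left, subobject on the right, then extension) closes. No serious obstacle arises; the proof is essentially a direct exact-sequence chase once the correct presentation of $\bfB(Y)$ as an extension of $\im(h)$ by $\im(g)$ is written down.
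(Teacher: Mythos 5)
The step ``$\im(h)$ is a subobject of $\bfB(Z)\in\sfX$, and abelian subcategories are closed under subobjects'' is a genuine gap, and it is fatal rather than cosmetic. In this paper a wide subcategory is only assumed to be abelian (so closed under kernels and cokernels of morphisms \emph{between objects of} $\sfX$) and closed under extensions; it need not be closed under arbitrary subobjects taken in $\sfA$. For instance, the full subcategory $\sfX$ of $\QQ$-vector spaces inside $\Ab$ is wide, but $\ZZ\subset\QQ$ is a subobject not in $\sfX$. This is exactly where your argument breaks: in your truncated four-term sequence the map $h$ has source $\bfB(Y)$, which is not known to lie in $\sfX$, so its image cannot be produced as a kernel or cokernel of a morphism of $\sfX$. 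Concretely, take $\sfT=\mcD(\ZZ)$, $\bfB=H^0$, $\sfX=\QQ$-vector spaces, and the rotation $\Sigma^{-1}(\QQ/\ZZ)\to\ZZ\to\QQ\to\QQ/\ZZ$ of the triangle on $\ZZ\to\QQ$: then $\bfB(\Sigma^{-1}Z)=0$, $\bfB(X)=0$, $\bfB(Z)=\QQ$ all lie in $\sfX$, yet $\bfB(Y)=\ZZ\notin\sfX$. So the statement is false under the hypotheses you actually used, which shows your proof cannot be repaired without invoking more data.

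The missing ingredient is the term $\Sigma X$, which is among the objects ``all but $Y$'' in the triangle and hence is assumed to lie in $\bfB^{-1}(\sfX)$ — a hypothesis you never use. The paper's proof extends the long exact sequence one step further,
\[
\bfB(\Sigma^{-1}Z)\xrightarrow{e}\bfB(X)\to\bfB(Y)\to\bfB(Z)\xrightarrow{h}\bfB(\Sigma X),
\]
and identifies the right-hand term of the short exact sequence $0\to\coker(e)\to\bfB(Y)\to\ker(h)\to 0$ as the \emph{kernel of a morphism between objects of} $\sfX$ (namely $h:\bfB(Z)\to\bfB(\Sigma X)$), not merely as a subobject of $\bfB(Z)$. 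Your treatment of the left-hand term, $\coker(e)\in\sfX$ using $\Sigma^{-1}Z\in\bfB^{-1}(\sfX)$, is correct and matches the paper; the symmetric point on the right, requiring $\Sigma X\in\bfB^{-1}(\sfX)$, is what needs to be added. This symmetry is precisely the ``extra suspension data'' theme the section is emphasising.
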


\begin{proof}
	Apply $\bfB$ to the distinguished triangle given to obtain a long exact sequence
	\[\cdots \to \bfB(\Sigma^{-1}Z)\xrightarrow{e}\bfB(X)\xrightarrow{f}\bfB(Y)\xrightarrow{g}\bfB(Z)\xrightarrow{h}\bfB(\Sigma X)\to \cdots\]
	We have a short exact sequence 
	\[0\to \coker(e) \to \bfB(Y) \to \ker(h) \to 0.\]
	As $\sfX$ is abelian and both $e$ and $h$ are morphisms in $\sfX$, both $\coker(e)$ and $\ker(h)$ are in $\sfX$. As $\sfX$ is also closed under extensions, this forces $\bfB(Y)\in \sfX$ as required.
\end{proof}
Combining the above propositions gives the following corollary:

\begin{cor}
	If $\sfX$ is wide and $\bfB^{-1}(\sfX)$ is closed under suspensions, then $\bfB^{-1}(\sfX)$ is thick.
\end{cor}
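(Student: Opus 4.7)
The plan is to verify the three closure properties that define a thick subcategory, as unpacked in the definition given in the paper: closure under suspensions, cones, and direct summands. Two of these are already in hand. Suspensions are closed by hypothesis. For summands, $\sfX$ is wide and therefore abelian, so by the remark preceding the corollary $\sfX$ is closed under summands; Proposition \ref{basicinverse}(2) then gives that $\bfB^{-1}(\sfX)$ inherits this property. Additivity of $\bfB^{-1}(\sfX)$ also comes for free from Proposition \ref{basicinverse}(1). The only nontrivial point left is closure under cones.

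For the cone, I would reduce to Proposition \ref{wide} via an Octahedral-free rotation argument. Given a morphism $f : A \to B$ with $A, B \in \bfB^{-1}(\sfX)$, complete to a distinguished triangle
\[
A \to B \to C \to \Sigma A
\]
and rotate once to get the distinguished triangle
\[
B \to C \to \Sigma A \to \Sigma B.
\]
This second triangle has the shape $X \to Y \to Z \to \Sigma X$ appearing in Proposition \ref{wide}, with $X = B$, $Y = C$, $Z = \Sigma A$. By closure under suspensions we have $\Sigma A, \Sigma B \in \bfB^{-1}(\sfX)$, and the desuspension $\Sigma^{-1}Z = A$ is in $\bfB^{-1}(\sfX)$ by assumption on the original triangle. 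Thus all hypotheses of Proposition \ref{wide} are satisfied, and we conclude $C = Y \in \bfB^{-1}(\sfX)$.

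There is essentially no obstacle here: the content is entirely packaged inside Proposition \ref{wide}, and the only thing one has to notice is that assuming $\bfB^{-1}(\sfX)$ closed under suspensions upgrades the three-object hypothesis of Proposition \ref{wide} (two given objects of the triangle together with one desuspension) to a two-object hypothesis after rotation. Combining this with the already established closure under summands completes the verification that $\bfB^{-1}(\sfX)$ is thick.
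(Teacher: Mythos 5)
Your proof is correct and follows the same route the paper intends: the paper leaves the corollary as an immediate combination of Proposition \ref{basicinverse} (additivity and summands) with Proposition \ref{wide}, and your rotation of the triangle $A \to B \to C \to \Sigma A$ to place the cone in the middle position is exactly the routine step being left implicit. You also correctly checked all four objects ($B$, $\Sigma A$, $\Sigma B$, and $\Sigma^{-1}(\Sigma A)=A$) needed for the hypotheses of Proposition \ref{wide}, so nothing is missing.
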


This continues our general theme of needing additional suspension data to obtain results. We now show that provided we have the right data on suspensions we can just focus on a collection of generators. 

\begin{construction}
	Given a collection of objects $\mcX$ we can generate the full subcategory $\thick(\mcX)$ iteratively. Set
	\begin{align*}
		\langle \mcX\rangle_1&=\add(\Sigma^k X \ \vert \ X \in \mcX, k \in \ZZ)\\
		\langle \mcX \rangle_{i+1} &= \add(X \ \vert \exists\ X_1\to X \to X_2 \to \Sigma X_1,\ X_1\in\langle \mcX\rangle_i, X_2\in\langle \mcX\rangle_1)
	\end{align*}
	Then $\thick(\mcX)=\cup_{i=1}^\infty \langle \mcX \rangle _i.$ Note that if $X \in \langle \mcX \rangle_i$, then so is $\Sigma^k X$ for all $k \in \ZZ$.
\end{construction} 

\begin{lem}
	Suppose $\sfX$ is wide. Given a finite set of objects $\mcX=\{x_1,\dots,x_n\}$ in $\sfT$, suppose that for all $k \in \ZZ$ and $j=1,\dots,n$ we have $\Sigma^k x_j \in \bfB^{-1}(\sfX).$ Then
	\[\thick(\mcX)\subseteq \bfB^{-1}(\sfX).\]
\end{lem}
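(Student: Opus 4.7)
The plan is to prove the containment by induction on the filtration $\langle \mcX \rangle_i$ defined in the Construction preceding the lemma, using the fact that $\thick(\mcX) = \bigcup_{i \geq 1} \langle \mcX \rangle_i$. At each step the ingredients are already available: $\bfB^{-1}(\sfX)$ is additive and closed under summands by Proposition \ref{basicinverse} (since $\sfX$ is wide, hence abelian, hence closed under summands), and Proposition \ref{wide} lets us upgrade triangles whose two outer terms (and the desuspension of the third) lie in $\bfB^{-1}(\sfX)$.

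For the base case $i = 1$, I note that $\langle \mcX \rangle_1 = \add(\Sigma^k x_j \mid k \in \ZZ,\, 1 \leq j \leq n)$. The hypothesis places every $\Sigma^k x_j$ into $\bfB^{-1}(\sfX)$, and additivity together with closure under summands then forces $\langle \mcX \rangle_1 \subseteq \bfB^{-1}(\sfX)$.

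For the inductive step, assume $\langle \mcX \rangle_i \subseteq \bfB^{-1}(\sfX)$ and take $X \in \langle \mcX \rangle_{i+1}$. By construction $X$ is a summand of some object $X'$ appearing in a distinguished triangle
\[
X_1 \to X' \to X_2 \to \Sigma X_1
\]
with $X_1 \in \langle \mcX \rangle_i$ and $X_2 \in \langle \mcX \rangle_1$. The inductive hypothesis gives $X_1, X_2 \in \bfB^{-1}(\sfX)$, and the key observation is that $\Sigma^{-1} X_2 \in \langle \mcX \rangle_1$ as well (since $\langle \mcX \rangle_1$ is closed under all integer shifts), so $\Sigma^{-1} X_2 \in \bfB^{-1}(\sfX)$ by the base case. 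Proposition \ref{wide} now applies to the triangle above and places $X'$ in $\bfB^{-1}(\sfX)$; since $\bfB^{-1}(\sfX)$ is closed under summands, we conclude $X \in \bfB^{-1}(\sfX)$.

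Taking the union over $i$ yields $\thick(\mcX) \subseteq \bfB^{-1}(\sfX)$. I do not expect a real obstacle here: the only subtlety is recognising that the extra suspension data in the hypothesis is precisely what is needed to feed the desuspension $\Sigma^{-1} X_2$ into Proposition \ref{wide}, compensating for the fact that $\bfB$ is only homological rather than exact in a triangulated sense.
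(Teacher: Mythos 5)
Your proof is correct and follows essentially the same route as the paper: induction along the filtration $\langle \mcX \rangle_i$, with the base case handled by additivity and closure under summands (Proposition \ref{basicinverse}) and the inductive step by Proposition \ref{wide} plus summand closure, using the shift-closure of the filtration to feed $\Sigma^{-1}X_2 \in \langle \mcX \rangle_1$ into the hypotheses. The only point the paper makes explicit that you leave tacit is that $\Sigma X_1$ also lies in $\bfB^{-1}(\sfX)$ (this is needed in the proof of Proposition \ref{wide} so that the kernel of $\bfB(X_2)\to\bfB(\Sigma X_1)$ is formed inside $\sfX$), and it follows immediately from the same shift-closure of $\langle \mcX \rangle_i$ together with your inductive hypothesis.
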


\begin{proof}
	We show that each of the $\langle \mcX \rangle_i$ are contained in $\bfB^{-1}(\sfX)$ by induction. If $Y \in \langle \mcX\rangle_1$ then it is a summand of a coproduct of objects in $\bfB^{-1}(\sfX)$ and so is also in $\bfB^{-1}(\sfX)$ by Proposition \ref{basicinverse}.
	The claim therefore holds for $i =1.$ Now suppose that the claim holds for all $i\leq m.$ Fix $Y \in \langle \mcX \rangle_{m+1}.$ We have a distinguished triangle
	\[X\to Y\oplus W \to Z \to \Sigma X\]
	with $X \in \langle \mcX \rangle_m$ and $Z \in \langle\mcX \rangle_1$. By induction both $X$ and $\Sigma X$ are in $\bfB^{-1}(\sfX)$ and so are both $Z$ and $\Sigma^{-1} Z.$ Therefore so is $Y\oplus W$ by Proposition \ref{wide}. $\sfX$ is wide so Proposition \ref{basicinverse}
	tells us $\bfB^{-1}(\sfX)$ is closed under summands, and so $Y \in \bfB^{-1}(\sfX)$. 
\end{proof}

As an aside we have the following interaction with certain homotopy colimits. 

\begin{prop}
	Suppose $\sfX$ is cocomplete and wide. Let \[Y_0 \xrightarrow{j_1} Y_1 \xrightarrow{j_2} Y_2 \xrightarrow{j_3} \cdots\]
	be a sequence in $\sfT$. If there is an increasing sequence of integers 
	\[0\leq i_0 < i_1 < i_2 < i_3 < \cdots\]
	such that for all $i_k$ we have that $Y_{i_k}$ and $\Sigma Y_{i_k}$ are objects of $\bfB^{-1}(\sfX)$, then $\hocolim Y_i$ is an object of $\bfB^{-1}(\sfX).$
\end{prop}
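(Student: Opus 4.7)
The plan is to reduce the statement to a single application of Proposition 4.6, applied to the Milnor triangle which realises a sequential homotopy colimit. First I would pass to the cofinal subsequence $(Y_{i_k})_{k \geq 0}$; since this subsequence is cofinal in $\NN$, the two sequential homotopy colimits coincide (this is the standard cofinality property of sequential hocolims, and is the single point at which the higher-categorical enhancement of $\sfT$ promised at the start of \S4 is used). After this reduction, every $Y_j$ \emph{and} every $\Sigma Y_j$ lies in $\bfB^{-1}(\sfX)$.

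Next, consider the defining Milnor triangle
\[
\CMcoprod_{j \geq 0} Y_j \xrightarrow{\,1 - \mathrm{shift}\,} \CMcoprod_{j \geq 0} Y_j \longrightarrow \hocolim_j Y_j \longrightarrow \Sigma \CMcoprod_{j \geq 0} Y_j.
\]
By Proposition \ref{basicinverse}(3), $\bfB^{-1}(\sfX)$ inherits cocompleteness from $\sfX$, so $\CMcoprod_j Y_j \in \bfB^{-1}(\sfX)$. Applying the same cocompleteness property to the sequence $(\Sigma Y_j)_j$ yields $\Sigma \CMcoprod_j Y_j \cong \CMcoprod_j \Sigma Y_j \in \bfB^{-1}(\sfX)$ as well.

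Finally, rotate the Milnor triangle one step to the right to place $\hocolim_j Y_j$ in the middle position:
\[
\CMcoprod_{j \geq 0} Y_j \longrightarrow \hocolim_j Y_j \longrightarrow \Sigma \CMcoprod_{j \geq 0} Y_j \longrightarrow \Sigma \CMcoprod_{j \geq 0} Y_j.
\]
The first and third terms both lie in $\bfB^{-1}(\sfX)$ by the previous paragraph, and the desuspension of the third term is $\CMcoprod_j Y_j$, which also lies in $\bfB^{-1}(\sfX)$. Since $\sfX$ is wide, Proposition \ref{wide} applies and forces $\hocolim_j Y_j \in \bfB^{-1}(\sfX)$, as required.

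The only non-formal step is the cofinality reduction; once that is granted the conclusion is purely a matter of bookkeeping with the Milnor triangle and the preceding two propositions. The main subtlety to watch for is the hypothesis in Proposition \ref{wide} that $\Sigma^{-1}Z$ also belong to $\bfB^{-1}(\sfX)$ — this is precisely why the hypothesis of the present proposition requires both $Y_{i_k}$ and $\Sigma Y_{i_k}$ (and not merely $Y_{i_k}$) to lie in $\bfB^{-1}(\sfX)$.
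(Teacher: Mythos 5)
Your proposal is correct and takes essentially the same route as the paper: pass to the subsequence, place the coproducts of the $Y_{i_k}$ and of the $\Sigma Y_{i_k}$ into $\bfB^{-1}(\sfX)$ via Proposition \ref{basicinverse}, apply Proposition \ref{wide} to the (rotated) Milnor triangle, and identify the two homotopy colimits by cofinality --- making the rotation explicit, and noting why both $Y_{i_k}$ and $\Sigma Y_{i_k}$ are needed, is a nice touch. One small correction: the cofinality identification $\hocolim Y_i \cong \hocolim Y_{i_k}$ is a purely formal fact about triangulated categories with countable coproducts (\cite[1.7.1]{NeeCat}, which is what the paper invokes), so no higher-categorical enhancement is needed at that step.
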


\begin{proof}
	By Proposition \ref{basicinverse}
	the coproduct of the $Y_{i_k}$ lies in $\bfB^{-1}(\sfX)$ as is the coproduct of the $\Sigma Y_{i_k}$. The homotopy colimit $\hocolim Y_{i_k}$ is defined by the triangle
	\[\coprod_{k=0}^{\infty}Y_{i_k} \xrightarrow{1-shift}\coprod_{k=0}^{\infty}Y_{i_k} \xrightarrow{} \hocolim{Y_{i_k}} \to \Sigma\coprod_{k=0}^{\infty}Y_{i_k}\]
	where the shift map is the direct sum of the maps $j_{i_k +1}: X_{i_k} \to X_{i_k +1}$.
	By Proposition \ref{wide} $\hocolim Y_{i_k}$ lies in $\bfB^{-1}(\sfX)$. By \cite[1.7.1]{NeeCat} $\hocolim Y_i \cong \hocolim Y_{i_k}$, completing the proof.  
\end{proof}

Before applying these properties to some categories of interest let us recall some sheaf-theoretic definitions.

\begin{defn}
	A sheaf of modules $\mcF$ on a ringed space $(X,\mcO_X)$ is of \emph{finite type} if for every $x \in X$ there exists some open neighbourhood $U$ such that $\mcF\vert_U$ is generated by finitely many sections.
\end{defn}

\begin{defn}
	A sheaf of modules $\mcF$ on a ringed space $(X,\mcO_X)$ is \emph{quasi-coherent} if it is locally the cokernel of a map of free modules. That is, there is an open cover $\{U_\lambda\}_{\lambda\in\Lambda}$ of $X$ such that for every $\lambda$ there exist index sets $I_\lambda$ and $J_\lambda$ and an exact sequence of sheaves of $\mcO_X$-modules of the form
	\[\mcO^{\oplus I_\lambda}_{U_\lambda} \to\mcO^{\oplus J_\lambda}_{U_\lambda}\to \mcF\vert_{U_\lambda}\to 0. \]
	A sheaf of $\mcO_X$-modules $\mcF$ is said to be \emph{coherent} if
	\begin{itemize}
		\item $\mcF$ is of finite type and,
		\item for every open $U\subseteq X$ and every finite collection $s_i \in \mcF(U),i=1,\dots,n$ the kernel of the associated map $\mcO^{\oplus n}_U \to \mcF\vert_U$ is of finite type.
	\end{itemize}
\end{defn} 

Note that every coherent sheaf is quasi-coherent.

\begin{prop}
	Let $\sfT$ be a big tt-category acting on a triangulated category $\sfK$. For each $A \in \sfK^c$ consider the associated sheaf functor 
	\[[A,-]^\#: \sfK \to \Modu\mcO_\sfT.\]
	Then this functor fits into the setup of Convention \ref{convfuncB}, taking $\sfA=\Modu\mcO_\sfT$ and $\sfX=\Coh(\Spec(\sfT)).$ Thus the results requiring $\sfX$ abelian hold. Moreover if $\Spec(\sfT^c)$ is a scheme, setting $\sfX=\QCoh(\Spec(\sfT))$ will also satisfy the required conditions.
\end{prop}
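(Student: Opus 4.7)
My plan is to verify, term by term, the hypotheses of Convention \ref{convfuncB} together with the abelian, wide, or cocomplete conditions on $\sfX$ invoked in the various propositions of the section.

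First I will note that for $A \in \sfK^c$ the functor $[A,-]^\#$ is homological and coproduct preserving by the earlier lemma of this section, and that its codomain $\Modu\mcO_\sfT$ is a (Grothendieck) abelian category, being the category of modules over the sheaf of rings $\mcO_\sfT$. Convention \ref{convfuncB} was stated with the domain of $\bfB$ being a big tt-category, while here $\sfK$ is only triangulated; however the arguments of this section only ever invoke the homological and coproduct-preserving properties of $\bfB$, so this adaptation is purely formal.

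Next I will verify the properties of $\sfX = \Coh(\Spec(\sfT))$. It is a standard fact that on any ringed space the coherent modules form a full abelian subcategory of $\Modu\mcO$ closed under extensions and summands, and hence form a wide subcategory in the sense of the above definition. Consequently Propositions \ref{basicinverse} and \ref{wide} apply, giving that the preimage under $[A,-]^\#$ of $\Coh$ is an additive subcategory closed under summands, and (under an additional suspension-closure hypothesis) thick.

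Under the scheme hypothesis on $\Spec(\sfT^c)$, the category $\QCoh(\Spec(\sfT))$ is a wide full subcategory of $\Modu\mcO_\sfT$, and is moreover cocomplete, being closed under arbitrary coproducts. Hence all of Proposition \ref{basicinverse} (including part 3), Proposition \ref{wide}, the finite-generator lemma, and the homotopy-colimit proposition apply for this choice of $\sfX$. The main obstacle is the single point where the scheme hypothesis is genuinely needed: on a general ringed space $\QCoh$ need not be abelian, since kernels of morphisms of quasi-coherent sheaves can fail to be quasi-coherent. The scheme structure is precisely what rules this pathology out, and on any ringed space the coherent analogue goes through without such a restriction, which is why no corresponding hypothesis was needed for the first claim.
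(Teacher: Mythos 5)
Your verification is correct and follows the route the paper intends: the paper states this proposition without any written proof, treating it as immediate from the earlier lemma that $[A,-]^\#$ is homological and coproduct preserving together with exactly the standard facts you cite (coherent modules form a wide subcategory of $\Modu\mcO_X$ on any ringed space, while quasi-coherent modules are wide and cocomplete once one has a scheme, the kernel pathology being the reason for that hypothesis). Your observation that Convention \ref{convfuncB} literally takes a big tt-category as the source, so that replacing $\sfT$ by the $\sfT$-module $\sfK$ is a harmless formal adaptation since only homologicity and coproduct preservation are ever used, is a point the paper glosses over and is worth making explicit.
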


\begin{rem}
	Note that this setup holds for both the twisted and non-twisted case as appropriate.
\end{rem}

We can condense this section into the following result:

\begin{cor}
	Suppose $\sfK=\thick(X).$ If for all $i \in \ZZ, [A,\Sigma^i X]^\#$ is coherent, then for all $B \in \sfK$ the sheaf $[A,B]^\#$ is coherent.
	If $\Spec(\sfT^c)$ is a scheme then if for all $i \in \ZZ, [A,\Sigma^i X]^\#$ is quasi-coherent, then for all $B \in \sfK$ the sheaf $[A,B]^\#$ is quasi-coherent.
\end{cor}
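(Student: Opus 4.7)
The strategy is to invoke the preceding lemma (the one giving $\thick(\mcX)\subseteq \bfB^{-1}(\sfX)$ when a finite generating set and all its suspensions lie in $\bfB^{-1}(\sfX)$) applied to the singleton $\mcX=\{X\}$. By the proposition immediately before the corollary, the functor $\bfB := [A,-]^\#\colon \sfK \to \Modu\mcO_\sfT$ is homological and coproduct preserving, and thus fits Convention \ref{convfuncB} with $\sfA=\Modu\mcO_\sfT$. The hypothesis ``$[A,\Sigma^i X]^\#$ is coherent (resp.\ quasi-coherent) for every $i \in \ZZ$'' is verbatim the condition $\Sigma^i X \in \bfB^{-1}(\sfX)$ for all $i$, where $\sfX$ is the chosen subcategory of $\Modu\mcO_\sfT$.

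What is left to check before invoking the lemma is that the target subcategory $\sfX$ is \emph{wide}, i.e.\ abelian and closed under extensions inside $\Modu\mcO_\sfT$. For $\sfX=\Coh(\Spec(\sfT))$ this is the classical two-out-of-three fact that kernels, cokernels, and extensions of coherent sheaves (in the sense of the definition recalled above) are again coherent, valid on any ringed space. For $\sfX=\QCoh(\Spec(\sfT))$ this is where the scheme hypothesis is essential: on a scheme, quasi-coherent sheaves form an abelian subcategory of $\Modu\mcO$ closed under extensions, while on a general ringed space quasi-coherence need not be preserved under kernels.

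With wideness of $\sfX$ in hand, the lemma yields $\thick(X)\subseteq \bfB^{-1}(\sfX)$. Since by hypothesis $\sfK=\thick(X)$, we obtain $[A,B]^\# \in \sfX$ for every $B \in \sfK$, which proves both assertions in a single stroke.

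The only real obstacle is the verification of wideness for $\QCoh$, which is precisely where the scheme hypothesis enters; for $\Coh$ and the rest of the argument, everything is formal — the heavy lifting was done in the preceding inductive lemma, and the corollary is essentially a repackaging.
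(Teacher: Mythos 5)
Your proposal is correct and is exactly the argument the paper intends: the corollary is stated as a condensation of the section, to be deduced by applying the preceding lemma to $\mcX=\{X\}$ with $\bfB=[A,-]^\#$ as set up in the proposition before it, where $\sfX=\Coh(\Spec(\sfT))$ is wide on any ringed space and $\sfX=\QCoh(\Spec(\sfT))$ is wide precisely under the scheme hypothesis. Your identification of the wideness of $\QCoh$ as the only place the scheme assumption enters matches the paper's own remark.
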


\begin{cor}
	Suppose $\sfK=\thick(\unit)$ and let $u= \Sigma \unit.$ If the $\Sigma \unit$-twisted structure sheaf $\mcO^\bullet_{\sfT}$ is coherent, then every sheaf of the form $[\unit, B]^\#$ is coherent. 
\end{cor}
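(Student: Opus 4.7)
The plan is to deduce this corollary directly from the preceding one by taking $A = X = \unit$. Under the hypothesis $\sfK = \thick(\unit)$, the previous corollary reduces the claim to verifying that, for every $i \in \ZZ$, the sheaf $[\unit, \Sigma^i \unit]^\#$ is coherent as a sheaf of graded $\mcO^\bullet_\sfT$-modules. So the whole task is to identify these sheaves in terms of the structure sheaf and then invoke the coherence hypothesis on $\mcO^\bullet_\sfT$.

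First I would unpack the defining presheaf. Because $u = \Sigma \unit$ one has $u^{\otimes j} \cong \Sigma^j \unit$, and hence for a quasi-compact open $U \subseteq \Spc(\sfT^c)$,
\[
\prescript{}{\mathrm{p}}{[\unit, \Sigma^i \unit]}(U) \;=\; \bigoplus_{j \in \ZZ} \Hom_{\sfT(U)}(\unit_U,\, \Sigma^j \unit_U \otimes \Sigma^i \unit_U) \;\cong\; \bigoplus_{j \in \ZZ} \Hom_{\sfT(U)}(\unit_U,\, \Sigma^{i+j}\unit_U).
\]
Reindexing by $k = i+j$ identifies the right-hand side, as a graded abelian group, with $\prescript{}{\mathrm{p}}{\mcO^\bullet_\sfT}(U)$ up to a degree shift by $i$. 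This identification is natural in $U$ and compatible with the $\prescript{}{\mathrm{p}}{\mcO^\bullet_\sfT}(U)$-module structure defined in the proof that $[-,-]^\#$ takes values in $\grMod \mcO^\bullet_\sfT$, so sheafifying yields an isomorphism of graded $\mcO^\bullet_\sfT$-modules
\[
[\unit, \Sigma^i \unit]^\# \;\cong\; \mcO^\bullet_\sfT[i].
\]

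Since shifts of a coherent sheaf of graded modules remain coherent, and $\mcO^\bullet_\sfT$ is coherent by hypothesis, every $[\unit, \Sigma^i \unit]^\#$ is coherent. Applying the previous corollary with $A = X = \unit$ then gives that $[\unit, B]^\#$ is coherent for every $B \in \sfK = \thick(\unit)$, as required.

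The only real subtlety is the bookkeeping: one has to take care that the graded module structure survives the reindexing $k = i + j$ in the correct way, and that the sheafification functor $(-)^\#$ commutes with the degree shift on the level of graded $\mcO^\bullet_\sfT$-modules. Both are formal, so there is no serious obstacle; the previous corollary does all of the triangulated-category work, and the only remaining content is this explicit identification of $[\unit, \Sigma^i \unit]^\#$ with a shift of the structure sheaf.
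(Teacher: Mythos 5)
Your proposal is correct and follows the same route as the paper: deduce the statement from the preceding corollary with $A=X=\unit$, the only content being that coherence of $\mcO^\bullet_\sfT$ gives coherence of each $[\unit,\Sigma^i\unit]^\#$. The paper treats that last point as immediate, whereas you spell out the (correct) identification $[\unit,\Sigma^i\unit]^\#\cong\mcO^\bullet_\sfT$ up to a degree shift, which is a harmless elaboration rather than a different argument.
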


\begin{proof}
	Follows immediately from the previous corollary: if the $\Sigma \unit$-twisted structure sheaf $\mcO^\bullet_{\sfT}$ is coherent then each of the sheaves $[\unit, \Sigma^i \unit]^\#$ are coherent. Therefore by the previous corollary each sheaf of the form $[\unit, B]^\#$ is coherent. 
\end{proof}

\section{Affine categories and quasi-coherence}
Our results on (quasi-)coherence so far require us to already know the nature of the suspensions of generators. 
We now restrict to the case of $\sfT$ acting on itself, and consider sheaves of the form $[\unit,X]^\#.$ We will show that when the Balmer spectrum of $\sfT$ is well-behaved, all such sheaves are quasi-coherent.

\begin{defn}
	We say a big tt-category $\sfT$ is \emph{affine} if the natural comparison map $\rho:\Spec(\sfT^c) \to \Spec(R_\sfT)$ is an isomorphism. 
	We say that $\sfT$ is \emph{schematic} if there exists an open cover $\{U_i\}$ of $\Spc(\sfT^c)$ such that for each $i$ the natural comparison maps $\rho_i:\Spec(\sfT^c(U_i))\to \Spec(R_{\sfT(U_i)})$ are isomorphisms. 
	In the twisted case we instead consider the corresponding graded morphisms and say $\sfT$ is \emph{twisted affine} and \emph{twisted schematic} respectively.  
\end{defn}

Immediately from the definitions we see that every affine category is schematic and that the spectrum of a schematic category is a scheme.

\begin{prop}
	If $X$ is a quasi-compact quasi-seperated scheme then $\mcD^{\perf}(X)$ is a schematic category. 
\end{prop}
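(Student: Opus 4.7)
The plan is to reduce the schematic property to the affine case via Balmer's reconstruction theorem for perfect derived categories, combined with the Thomason--Trobaugh-style localization of perfect complexes on a scheme. Implicitly we realise $\mathcal{D}^{\perf}(X)$ as the compacts of the big tt-category $\mathcal{D}_{\QCoh}(X)$, so that the notions of Section~3 apply.

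First, since $X$ is quasi-compact, choose a finite affine open cover $X = \bigcup_{i=1}^n U_i$ with $U_i = \Spec(A_i)$. Because $X$ is quasi-separated, each inclusion $U_i \hookrightarrow X$ is quasi-compact, so each $U_i$ is a quasi-compact open of $X$. By Balmer's reconstruction \cite{BaSSS}, building on Thomason's classification of thick $\otimes$-ideals of $\mathcal{D}^{\perf}(X)$, there is a canonical isomorphism of locally ringed spaces $\Spec(\mathcal{D}^{\perf}(X)) \cong X$; under this identification the $U_i$ become quasi-compact open subsets of $\Spc(\mathcal{D}^{\perf}(X))$ forming an open cover.

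Next, I would identify the local tt-category $\mathcal{D}^{\perf}(X)(U_i)$, constructed in Section~3 as the idempotent completion of the Verdier quotient by $\mathcal{D}^{\perf}(X)_{X \setminus U_i}$, with $\mathcal{D}^{\perf}(U_i) = \mathcal{D}^{\perf}(A_i)$. The thick $\otimes$-ideal $\mathcal{D}^{\perf}(X)_{X \setminus U_i}$ is exactly the kernel of the restriction functor $j_i^\ast \colon \mathcal{D}^{\perf}(X) \to \mathcal{D}^{\perf}(U_i)$; the induced functor on Verdier quotients is fully faithful with dense image, and passing to the idempotent completion recovers all of $\mathcal{D}^{\perf}(U_i)$. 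Consequently the central ring satisfies $R_{\mathcal{D}^{\perf}(X)(U_i)} = \End_{\mathcal{D}^{\perf}(A_i)}(\mathcal{O}_{U_i}) = A_i$.

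Finally, for each $i$ the comparison map
\[\rho_i : \Spec(\mathcal{D}^{\perf}(X)(U_i)) \to \Spec(R_{\mathcal{D}^{\perf}(X)(U_i)}) = \Spec(A_i)\]
is an isomorphism of locally ringed spaces by the affine case of Balmer's reconstruction: Thomason's classification pairs prime $\otimes$-ideals of $\mathcal{D}^{\perf}(A_i)$ bijectively with prime ideals of $A_i$, the topologies match on the basis of quasi-compact / distinguished opens, and the structure sheaves agree because both evaluate to $A_i[s^{-1}]$ on each $D(s) = U(\cone(s))$. Thus $\{U_i\}$ witnesses the schematic property of $\mathcal{D}^{\perf}(X)$. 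The main obstacle is the careful identification of $\mathcal{D}^{\perf}(X)(U_i)$ with $\mathcal{D}^{\perf}(A_i)$, which depends on the Thomason--Trobaugh localization sequence for perfect complexes on a qcqs scheme; once this is in hand, the affine reconstruction theorem delivers the conclusion immediately.
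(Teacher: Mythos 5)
Your argument is essentially the paper's proof: take an affine open cover, transport it to $\Spc(\mcD^{\perf}(X))$ via Balmer's reconstruction, identify each local category $\mcD^{\perf}(X)(U_i)$ with $\mcD^{\perf}(U_i)$ (the paper cites \cite{BaPresheaves}, you invoke the underlying Thomason--Trobaugh localization), and conclude by the affine case \cite[8.1]{BaSSS}. Your write-up just spells out the two cited ingredients in more detail (and harmlessly restricts to a finite cover), so it is correct and follows the same route.
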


\begin{proof}
	As $X$ is a scheme we have an open cover of $X$ by affines $U_i.$ Let $E(U_i)$ be the image of $U_i$ under the homeomorphism $X\cong \Spc(\mcD^{\perf}(X))$ of \cite[7.3]{BaPresheaves}. Then
	\[E(U_i) \cong \Spc(\mcD^{\perf}(X))(E(U_i))\cong \Spc(\mcD^{\perf}(U_i))\]
	by \cite[7.8]{BaPresheaves}. By \cite[8.1]{BaSSS} each of the $\mcD^{\perf}(U_i)$ are affine, completing the proof.
\end{proof}

A category being affine is sufficient for all of the associated sheaves $[\unit, X]^\#$ to be quasi-coherent. We start by considering compact objects.

\begin{prop}
	\label{propcomsec}
	For a compact object $X \in \sfT^c$ and any endomorphism $s \in R_\sfT$ we have
	\[
	\prescript{}{\mathrm{p}}{[}\unit,X](U(s))\cong \Hom_{\sfT^c}(\unit, X)[s^{-1}].\]
\end{prop}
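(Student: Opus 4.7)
The approach is to mimic the proof of the ring-level analogue \cite[Lemma 6.9]{BaSSS}, which treats the case $X = \unit$. Let $Z = \Spc(\sfT^c) \setminus U(s) = \supp(\cone(s))$. Rigidity of $\sfT^c$ together with Proposition \ref{radicalsupp} gives $\sfT^c_Z = \thick^\otimes(\cone(s))$, and since $\unit, X$ are compact, Theorem \ref{millerneeman} identifies
\[\prescript{}{\mathrm{p}}{[}\unit,X](U(s)) = \Hom_{\sfT(U(s))}(\unit, X) = \Hom_{\sfT^c/\sfT^c_Z}(\unit, X).\]
Localising the triangle $\unit \xrightarrow{s} \unit \to \cone(s) \to \Sigma\unit$ kills its third term, so $s$ becomes invertible in $\sfT^c/\sfT^c_Z$, producing a canonical comparison map
\[\phi : \Hom_{\sfT^c}(\unit, X)[s^{-1}] \longrightarrow \Hom_{\sfT^c/\sfT^c_Z}(\unit, X), \qquad g/s^n \longmapsto s^{-n} \circ g.\]
The goal is to prove $\phi$ is a bijection.

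The key technical step is an annihilation lemma: for every compact $t \in \thick^\otimes(\cone(s))$ there exists $n \geq 1$ with $s^n \cdot \id_t = 0$ in $\End_{\sfT^c}(t)$. I would prove this by induction on the iterative construction of $\thick^\otimes(\cone(s))$. For the base case $t = \cone(s)$, post-composition of $s \cdot \id_{\cone(s)}$ with the projection $\pi : \unit \to \cone(s)$ vanishes, so $s \cdot \id_{\cone(s)}$ factors through the connecting map $\delta : \cone(s) \to \Sigma\unit$; multiplying once more by $s$ then uses $(\Sigma s) \circ \delta = 0$ (two consecutive maps in a rotated triangle) to conclude $s^2 \cdot \id_{\cone(s)} = 0$. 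Tensoring with an arbitrary compact object preserves the property, as do suspensions and direct summands. For an extension $A \to B \to C \to \Sigma A$ with $s^{n_A} \cdot \id_A = 0$ and $s^{n_C} \cdot \id_C = 0$, a standard long-exact-sequence chase shows that $s^{n_C} \cdot \id_B$ factors through $A$, after which a further multiplication by $s^{n_A}$ kills this factorisation.

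With the lemma in place, both surjectivity and injectivity of $\phi$ follow from a Verdier-calculus argument. For surjectivity, any morphism in $\sfT^c/\sfT^c_Z$ is represented by a roof $\unit \xleftarrow{\alpha} Y \xrightarrow{\beta} X$ with $\cone(\alpha) \in \thick^\otimes(\cone(s))$; applying the lemma to $\cone(\alpha)$ and using the long exact sequence of the triangle $Y \xrightarrow{\alpha} \unit \to \cone(\alpha) \to \Sigma Y$ produces $\gamma : \unit \to Y$ with $\alpha\gamma = s^n$, so the roof is equivalent to $\unit \xleftarrow{s^n} \unit \xrightarrow{\beta\gamma} X = \phi(\beta\gamma/s^n)$. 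For injectivity, if $g/s^n \in \ker\phi$ then $g$ vanishes in the Verdier quotient, so there is some $\alpha : Y \to \unit$ with $\cone(\alpha) \in \thick^\otimes(\cone(s))$ and $g\alpha = 0$; running the same construction produces $\gamma : \unit \to Y$ with $\alpha\gamma = s^m$, whence $s^m \cdot g = g \circ \alpha \circ \gamma = 0$ in $\sfT^c$ and therefore $g/s^n = 0$ in the localisation. The only real obstacle I anticipate is establishing the annihilation lemma; once that is in hand, the bijectivity of $\phi$ is a formal consequence of the Verdier quotient calculus.
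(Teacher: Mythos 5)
Your proposal is correct, and it shares its skeleton with the paper's argument: both identify $\prescript{}{\mathrm{p}}{[}\unit,X](U(s))$ with $\Hom_{\sfT^c/\sfT^c_Z}(\unit,X)$ for $Z=\supp(\cone(s))$ (you are slightly more careful in routing this through Theorem \ref{millerneeman}(3), since the presheaf is a priori defined via the big quotient), and both use Proposition \ref{radicalsupp} together with rigidity to get $\sfT^c_Z=\thick^\otimes(\cone(s))$. Where you genuinely diverge is in the second half: the paper at this point simply cites Balmer's central-localisation machinery --- $\thick^\otimes(\cone(s))=\thick^\otimes(\cone(s^i)\,\vert\,i\geq 0)$ is \cite[2.16]{BaSSS}, this ideal is the one attached to the multiplicative set $S=\{s^i\}$ as in \cite[3.5]{BaSSS}, and \cite[3.6]{BaSSS} then gives $\Hom_{\sfT^c/\mcJ}(\unit,X)\cong S^{-1}\Hom_{\sfT^c}(\unit,X)$ --- whereas you re-prove the content of that black box directly: the annihilation lemma (with the classical $s^2\cdot\id_{\cone(s)}=0$ as base case, extended over the thick $\otimes$-ideal) plus the roof calculus for surjectivity and injectivity of the comparison map $\phi$. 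Your version is longer but self-contained and makes visible exactly where centrality of the $R_\sfT$-action is used; the paper's version is shorter and leans on the general theory of central localisation, which also explains why no finiteness or noetherian hypotheses are needed. One cosmetic slip: in the base case you say ``post-composition'' of $s\cdot\id_{\cone(s)}$ with $\pi:\unit\to\cone(s)$ vanishes, when you mean pre-composition, i.e.\ $(s\cdot\id_{\cone(s)})\circ\pi=\pi\circ s=0$ by centrality; the rest of that computation, and the extension and surjectivity/injectivity arguments, are sound.
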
 

The proof of this proposition is nearly identical to \cite[6.9]{BaSSS}. We will fill in those details left to the reader in \cite{BaSSS}. 

\begin{proof}
	By definition 
	\[\prescript{}{\mathrm{p}}{[}\unit, X](U(s))=\Hom_{\sfT^c/\sfT^c_Z}(\unit,X)
	\]
	where $Z$ is the closed complement of $U(s)$. In fact $Z=\supp(\cone(s))$ and clearly
	\[\supp(\cone(s))=\supp(\thick^\otimes(\cone(s))).\]
	Applying Proposition \ref{radicalsupp}
	\[\sfT^c_Z = \sqrt{\thick^\otimes(\cone(s))}.
	\]
	As $\sfT^c$ is rigid this gives $\sfT^c_Z = \thick^\otimes(\cone(s)).$ By \cite[2.16]{BaSSS} we have
	\[\thick^\otimes(\cone(s)) = \thick^\otimes(\cone(s^i)\ \vert \ i\geq 0).\]
	This is the ideal $\mcJ$ associated to the multiplicative set $S=\{s^i \ \vert \ i\geq0\}$ as in \cite[3.5]{BaSSS}. Applying \cite[3.6]{BaSSS} gives
	\[\prescript{}{\mathrm{p}}{[}\unit, X](U(s))\cong\Hom_{\sfT^{c}/\sfT^{c}_Z}(\unit, X)\cong\Hom_{\sfT^c/\mcJ}(\unit, X) \cong S^{-1}\Hom_{\sfT^c}(\unit, X),
	\]
	and so $\prescript{}{\mathrm{p}}{[}\unit, X](U(s))\cong \Hom_{\sfT^c}(\unit, X)[s^{-1}]$ as required.
\end{proof}

\begin{prop}
	Suppose $\sfT$ affine. Then for every $X \in \sfT^c$ the sheaf $[\unit, X]^\#$ is quasi-coherent on $\Spec(\sfT)$. Explicitly
	\[[\unit, X]^\# \cong \tilde{M}\]
	where $\tilde{M}$ is the sheaf associated to the $R_\sfT$-module $M=\Hom_{\sfT^c}(\unit, X).$
\end{prop}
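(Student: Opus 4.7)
The plan is to establish the isomorphism at the level of partially defined presheaves on the basis of quasi-compact opens, and then use the uniqueness of sheafification. Since $\sfT$ is affine, the comparison map $\rho : \Spec(\sfT) \to \Spec(R_\sfT)$ is an isomorphism of locally ringed spaces, and in particular a homeomorphism identifying the basic quasi-compact open $U(s) = U(\cone(s))$ with the distinguished open $D(s) \subseteq \Spec(R_\sfT)$ for every $s \in R_\sfT$. These form a basis for the topology on both spaces.

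First I would invoke Proposition \ref{propcomsec} to obtain an identification
\[
\prescript{}{\mathrm{p}}{[}\unit, X](U(s)) \;\cong\; M[s^{-1}]
\]
of $R_\sfT$-modules, where $M = \Hom_{\sfT^c}(\unit, X)$. The standard description of the quasi-coherent sheaf associated to a module over $\Spec(R_\sfT)$ gives $\tilde{M}(D(s)) \cong M[s^{-1}]$. Under the homeomorphism $\rho$, this yields a sectionwise isomorphism on the chosen basis.

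Next I would verify compatibility with restriction maps. Given $U(t) \subseteq U(s)$, the restriction in $\prescript{}{\mathrm{p}}{[}\unit, X]$ is induced by the further Verdier localisation $\sfT^c(U(s)) \to \sfT^c(U(t))$, and one needs to check this corresponds, under the isomorphisms above, to the canonical ring-theoretic localisation $M[s^{-1}] \to M[t^{-1}]$. This should follow by chasing through the proof of Proposition \ref{propcomsec}: each side is computed as the localisation of $\Hom_{\sfT^c}(\unit, X)$ at the multiplicative set $\{s^i\}$ (respectively $\{t^i\}$) via \cite[3.6]{BaSSS}, and the inclusion of these sets induces precisely the canonical localisation map. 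The resulting square of $R_\sfT$-modules is then forced to commute by the universal property of localisation.

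Finally, since both $\prescript{}{\mathrm{p}}{[}\unit, X]$ and the pullback along $\rho$ of $\tilde{M}$ are partially defined presheaves on the basis of quasi-compact opens, and the isomorphism above is natural in restrictions, passing to sheafifications on $\Spc(\sfT^c)$ gives $[\unit, X]^\# \cong \tilde{M}$ as sheaves of $\mcO_\sfT$-modules. Quasi-coherence then follows from the well-known quasi-coherence of $\tilde{M}$ on $\Spec(R_\sfT)$, transported via $\rho$. I expect the main technical obstacle to be the verification of naturality, since one must carefully relate the Verdier quotient descriptions of sections in Proposition \ref{propcomsec} to ordinary ring-theoretic localisations when moving between different basic opens; however, once this is unwound, all of the substantive tt-geometric content has already been absorbed into Proposition \ref{propcomsec} and the definition of affineness.
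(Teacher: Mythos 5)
Your proposal is correct and follows essentially the same route as the paper: identify $\prescript{}{\mathrm{p}}{[}\unit,X](U(s))\cong M[s^{-1}]$ via Proposition \ref{propcomsec}, match it with $\tilde{M}(D(s))$ under the homeomorphism $\rho$, and conclude by uniqueness of the sheaf associated to a presheaf on the basis of the $U(s)$. Your extra care in checking compatibility of the identifications with restriction maps is a point the paper leaves implicit, but it is a refinement of the same argument rather than a different approach.
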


\begin{proof}
	By Proposition \ref{propcomsec} we have for each $s \in R_\sfT$ an isomorphism
	\[\prescript{}{\mathrm{p}}{[}\unit,X](U(s)) \cong \Hom_{\sfT^c}(\unit, X)[s^{-1}].
	\]
	By \cite[Ch.2, 5.1]{Ha} we also have that
	\[\tilde{M}(D(s))\cong \Hom_{\sfT^c}(\unit,X)[s^{-1}].
	\]
	Given that $\rho$ is a homeomorphism, $D(s)\cong U(s)$ and so $\prescript{}{\mathrm{p}}{[}\unit, X]$ is isomorphic to $\tilde{M}$ on a basis of quasi-compact opens. Therefore the associated sheaves are isomorphic which gives $[\unit, X]^\# \cong \tilde{M}.$ As sheaves associated to modules are quasi-coherent, this completes the proof.
\end{proof}

The result can be extended to non-compact objects using the following theorem.

\begin{thm}\cite[3.3.7]{HPS}
	\label{hpstheorem}
	Let $\sfT$ be a big tt-category. 
	Then given a morphism $s \in R_\sfT$, the thick tensor ideal $\mcJ=\thick^\otimes(\cone(s))$ fits into a localisation diagram
	\[
	\begin{tikzcd}
		\mcJ \arrow[rr, hook] \arrow[d, hook]                              &  & \sfT^c \arrow[rr] \arrow[d, hook]                                                       &  & \sfT^c/\mcJ \arrow[d, hook]               \\
		\loc(\mcJ) \arrow[rr, shift left=3] \arrow[phantom,rr, "\perp" description] &  & \sfT \arrow[rr, shift left=3] \arrow[ll, shift left=3] \arrow[phantom,rr, "\bot" description] &  & \sfT/\loc(\mcJ) \arrow[ll, shift left=3]
	\end{tikzcd}
	\]
	such that for all objects $X \in \sfT$ we have
	\[\Hom_{\sfT/\loc(\mcJ)}(\unit,X)\cong S^{-1}\Hom_\sfT(\unit, X)
	\]
	where $S=\{s^i\ \vert \ i\geq 0\}.$
\end{thm}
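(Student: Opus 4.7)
The plan is to apply Theorem \ref{millerneeman} to obtain the diagram and then identify the smashing idempotent as a mapping telescope. Applying Theorem \ref{millerneeman} to the thick $\otimes$-ideal $\mcJ=\thick^\otimes(\cone(s))\subseteq\sfT^c$ immediately produces the claimed localisation diagram with $\loc(\mcJ)$ smashing. Writing $L\unit$ for the associated smashing idempotent and using the smashing identity $L_\mcJ X \cong L\unit\otimes X$ together with the right-adjoint description of the quotient, we get $\Hom_{\sfT/\loc(\mcJ)}(\unit,X) \cong \Hom_\sfT(\unit, L\unit\otimes X)$. The task then reduces to identifying this with $S^{-1}\Hom_\sfT(\unit,X)$.

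The core step is to identify $L\unit$ with the mapping telescope $T:=\hocolim(\unit\xrightarrow{s}\unit\xrightarrow{s}\unit\xrightarrow{s}\cdots)$. By uniqueness of the Bousfield localisation triangle, it suffices to exhibit a triangle $F\to\unit\to T\to\Sigma F$ with $F\in\loc(\mcJ)$ and $T$ right-orthogonal to $\loc(\mcJ)$. Comparing the constant tower at $\unit$ with the telescope tower via the levelwise maps $s^n:\unit\to\unit$ and invoking that triangulated fibres commute with sequential homotopy colimits, one obtains $F\simeq\hocolim\Sigma^{-1}\cone(s^n)$. An octahedral induction from $s^{n+1}=s\cdot s^n$ (or equivalently \cite[2.16]{BaSSS}) places every $\cone(s^n)$ in $\thick^\otimes(\cone(s))=\mcJ$, so $F\in\loc(\mcJ)$.

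For the right-orthogonality, the subcategory $\{Y:\Hom_\sfT(Y,T)=0\}$ is closed under coproducts, suspensions, and cofibres, so it suffices to check $\Hom_\sfT(\cone(s),T)=0$ on the generator $\cone(s)$ of $\loc(\mcJ)$. Compactness of $\cone(s)$ moves the hom inside the hocolim, yielding $\colim_n \Hom_\sfT(\cone(s),\unit)$ with transition maps given by postcomposition with $s$, i.e.\ by the action of $s\in R_\sfT$. The long exact sequence coming from $\unit\xrightarrow{s}\unit\to\cone(s)$ presents $\Hom_\sfT(\cone(s),\unit)$ as an extension of the kernel of multiplication by $s$ on $R_\sfT$ by the cokernel of multiplication by $s$ on $\Hom_\sfT(\Sigma\unit,\unit)$; both ends are annihilated by $s$, so the extension becomes zero after inverting $s$, and the colimit vanishes.

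With $L\unit\simeq T$ established, the final computation uses that $-\otimes X$ preserves homotopy colimits and that $\unit$ is compact:
\[
\Hom_\sfT(\unit, L\unit\otimes X) \cong \Hom_\sfT\!\bigl(\unit,\hocolim(X\xrightarrow{s}X\xrightarrow{s}\cdots)\bigr) \cong \colim\Hom_\sfT(\unit,X) \cong S^{-1}\Hom_\sfT(\unit,X),
\]
where the last isomorphism recognises the transition maps as multiplication by $s$. The main obstacle is the identification $F\simeq\hocolim\Sigma^{-1}\cone(s^n)$, which requires the formal compatibility of fibres with sequential homotopy colimits; this is implicit in the paper's standing assumption of a well-behaved theory of homotopy colimits inherited from an underlying $\infty$-categorical enhancement.
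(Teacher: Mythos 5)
The paper itself gives no proof of this statement: it is quoted from \cite[3.3.7]{HPS}, with only a remark that the localisation diagram follows from Theorem \ref{millerneeman}. Your telescope argument is essentially a reconstruction of the standard proof of that cited result, and most of it is sound: the adjunction plus the smashing identity reducing the claim to $\Hom_\sfT(\unit, L\unit\otimes X)$, the Bousfield-triangle characterisation of $L\unit$, the $s$-torsion computation of $\Hom_\sfT(\cone(s),\unit)$, and the final colimit identification are all correct, and the fibre-versus-sequential-hocolim compatibility you flag is indeed covered by the enhancement hypotheses the paper works under.

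There is, however, one genuine gap, in the orthogonality step. You reduce the vanishing of $\Hom_\sfT(-,T)$ on $\loc(\mcJ)$ to the single object $\cone(s)$, calling it ``the generator of $\loc(\mcJ)$''. But $\mcJ=\thick^\otimes(\cone(s))$ is the thick \emph{tensor ideal} generated by $\cone(s)$ in $\sfT^c$: it contains $\cone(s)\otimes t$ for every $t\in\sfT^c$, while the class $\{Y \mid \Hom_\sfT(Y,\Sigma^n T)=0 \text{ for all } n\}$ is localizing but not a tensor ideal, so checking $\cone(s)$ alone does not reach all of $\loc(\mcJ)$ unless $\sfT$ is generated by $\unit$ --- a hypothesis the theorem does not carry (note also that closure under suspension forces you to test against all shifts $\Sigma^n T$, not just $T$). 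The repair is short: one checks $s^2\cdot\id_{\cone(s)}=0$, so the compact objects annihilated by some power of $s$ form a thick tensor ideal containing $\cone(s)$ and hence all of $\mcJ$; for any such compact $j$, every map $j\to\Sigma^n\unit$ is killed by that power of $s$ (centrality of the $R_\sfT$-action), so compactness gives
\[
\Hom_\sfT(j,\Sigma^n T)\cong \colim\bigl(\Hom_\sfT(j,\Sigma^n\unit)\xrightarrow{s}\Hom_\sfT(j,\Sigma^n\unit)\xrightarrow{s}\cdots\bigr)=0.
\]
Equivalently, you can rerun your long-exact-sequence argument for $\cone(s)\otimes t$ using rigidity of compacts. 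With that adjustment (and the standard reference for $L_\mcJ X\cong L\unit\otimes X$ for smashing tensor-ideal localisations) your proof goes through.
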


\begin{rem}
	The existence of the localisation diagram can actually be obtained from Theorem \ref{millerneeman}.
\end{rem}

Immediately from the theorem we obtain:
\begin{cor}
	For any object $X \in \sfT$ and any endomorphism $s \in R_\sfT$ we have
	\[\prescript{}{\mathrm{p}}{[}\unit, X](U(s))\cong \Hom_\sfT(\unit, X)[s^{-1}].
	\]
\end{cor}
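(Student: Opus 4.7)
The plan is to unfold both sides and reduce directly to Theorem \ref{hpstheorem}, reusing the identification of the localisation category from Proposition \ref{propcomsec}. The key observation is that while Proposition \ref{propcomsec} required $X$ compact (so that hom-sets in the idempotent-completed Verdier quotient $\sfT^c/\mcJ$ could be computed as localisations of hom-sets in $\sfT^c$), the extension to general $X$ is precisely what Theorem \ref{hpstheorem} supplies.

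First, I would unfold the left-hand side. By construction,
\[\prescript{}{\mathrm{p}}{[}\unit, X](U(s)) = \Hom_{\sfT(U(s))}(\unit, X) = \Hom_{\sfT/\sfT_Z}(\unit, X),\]
where $Z$ is the closed complement of $U(s)$. Then I would identify $Z$ and $\sfT_Z$. As in the proof of Proposition \ref{propcomsec}, $Z = \supp(\cone(s)) = \supp(\thick^\otimes(\cone(s)))$, and Proposition \ref{radicalsupp} gives
\[\sfT^c_Z = \sqrt{\thick^\otimes(\cone(s))}.\]
Since $\sfT^c$ is rigid, every thick $\otimes$-ideal is radical, so $\sfT^c_Z = \thick^\otimes(\cone(s)) = \mcJ$. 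Consequently $\sfT_Z = \loc(\sfT^c_Z) = \loc(\mcJ)$.

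Now I would invoke Theorem \ref{hpstheorem} directly: for $S = \{s^i \ \vert \ i \geq 0\}$,
\[\Hom_{\sfT/\loc(\mcJ)}(\unit, X) \cong S^{-1}\Hom_\sfT(\unit, X) = \Hom_\sfT(\unit, X)[s^{-1}].\]
Combining this with the identification of the previous paragraph yields the desired isomorphism.

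There is no serious obstacle here; the only subtlety is ensuring that the ambient category of the localisation in Theorem \ref{hpstheorem} matches $\sfT(U(s))$, which is exactly the content of the rigidity step identifying $\sfT^c_Z$ with $\thick^\otimes(\cone(s))$. Because the HPS theorem already handles non-compact $X$ on the level of hom-sets, no further compactness or finiteness hypotheses are needed, and the corollary is indeed immediate.
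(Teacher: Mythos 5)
Your proposal is correct and follows the paper's intended argument: the paper derives this corollary ``immediately'' from Theorem \ref{hpstheorem}, with the identification $\sfT(U(s))=\sfT/\loc(\thick^\otimes(\cone(s)))$ carried over from the rigidity/radical step already made in the proof of Proposition \ref{propcomsec}, exactly as you spell out. No gaps; your write-up simply makes explicit the details the paper leaves implicit.
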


\begin{cor}
	Suppose $\sfT$ affine. Then for every $X \in \sfT$ the sheaf $[\unit, X]^\#$ is quasi-coherent on $\Spec(\sfT).$ Explicitly 
	\[[\unit, X]^\# \cong \tilde{M},\]
	where $\tilde{M}$ is the sheaf associated to the $R_\sfT$-module $M=\Hom_\sfT(\unit, X).$
\end{cor}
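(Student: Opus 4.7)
The plan is to directly imitate the proof of the compact case, now using the previous corollary (the consequence of Theorem \ref{hpstheorem}) in place of Proposition \ref{propcomsec}. In other words, the only ingredient that was special to compact $X$ was the computation of sections over $U(s)$, and that computation has just been extended to arbitrary $X \in \sfT$. So the compactness hypothesis can simply be dropped.

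Concretely, I would proceed as follows. First, for each $s \in R_\sfT$, the preceding corollary gives
\[
\prescript{}{\mathrm{p}}{[}\unit, X](U(s)) \;\cong\; \Hom_\sfT(\unit, X)[s^{-1}] \;=\; M[s^{-1}].
\]
On the other side, the standard computation of $\tilde{M}$ on a distinguished open (Hartshorne Ch.~II, Prop.~5.1) yields
\[
\tilde{M}(D(s)) \;\cong\; M[s^{-1}].
\]
Because $\sfT$ is affine, the comparison map $\rho : \Spc(\sfT^c) \to \Spec(R_\sfT)$ is a homeomorphism and sends $U(s)$ to $D(s)$, and it is compatible with the restriction maps in both presheaves (this is exactly the naturality built into the construction of $\rho$). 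Hence the two presheaves $\prescript{}{\mathrm{p}}{[}\unit, X]$ and $\rho^{-1}(\tilde{M})$ agree on the basis of quasi-compact opens of $\Spc(\sfT^c)$, compatibly with restriction.

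Sheafifying and invoking the fact that a presheaf on a basis determines a unique sheaf, we conclude $[\unit, X]^\# \cong \tilde{M}$ as sheaves of $\mcO_\sfT$-modules on $\Spec(\sfT)$. Since $\tilde{M}$ is by definition quasi-coherent on $\Spec(R_\sfT)$, and quasi-coherence is preserved under the isomorphism of locally ringed spaces $(\rho,r) : \Spec(\sfT) \to \Spec(R_\sfT)$ provided by affineness, the sheaf $[\unit, X]^\#$ is quasi-coherent on $\Spec(\sfT)$.

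The only potential subtlety — and the place I would be most careful — is checking that the isomorphisms on sections are genuinely natural in $s$, i.e.\ that the identifications $\prescript{}{\mathrm{p}}{[}\unit, X](U(s)) \cong M[s^{-1}]$ and $\tilde{M}(D(s)) \cong M[s^{-1}]$ commute with restriction to $U(ss')$ resp.\ $D(ss')$. This is essentially formal from Theorem \ref{hpstheorem}, since the localisation functor in that theorem is compatible with further localisation at additional elements; but it is the one point where the argument is not a verbatim copy of the compact case, because one must confirm that the non-compact generalisation in Theorem \ref{hpstheorem} retains this naturality. Once this is in hand, no further work is needed.
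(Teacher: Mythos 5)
Your proposal is correct and is exactly the paper's argument: the paper simply states that the proof is identical to the compact case, which amounts to substituting the corollary of Theorem \ref{hpstheorem} for Proposition \ref{propcomsec} and then repeating the comparison of $\prescript{}{\mathrm{p}}{[}\unit,X](U(s))$ with $\tilde{M}(D(s))$ across the homeomorphism $\rho$, just as you do. Your extra remark about naturality of the identifications with respect to restriction is a reasonable point of care, but it is the same implicit step already present in the paper's compact-case proof, so there is no genuine divergence in approach.
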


The proof of this is identical to the compact case.

We can now conclude with the schematic case.

\begin{lem}
	\label{localres}
	Let $U \subseteq \Spc(\sfT^c)$ be a quasi-compact open subset. Then for objects $A \in \sfT^c, B \in \sfT$ there is an isomorphism of sheaves on $U$:
	\[[A,B]^\#\vert_{U} \cong [A_{U},B_{U}]^\#,\]
	where the associated sheaf functor on the left is over $\Spc(\sfT^c)$ and the associated sheaf functor on the right is over $\Spc(\sfT^c(U)) \cong U$. 
\end{lem}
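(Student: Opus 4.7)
The plan is to exhibit the two sheaves as sheafifications of the same presheaf on the basis of quasi-compact open subsets of $U$. Since a sheaf is determined on a basis, this will conclude.

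First I would set up the presheaves to compare. On the right hand side, $[A_U, B_U]^\#$ is, by definition, the sheafification of the presheaf on $\qbasic(\Spc(\sfT(U)^c)) = \qbasic(U)$ that assigns to a quasi-compact open $V \subseteq U$ the value
\[\prescript{}{\mathrm{p}}{[A_U, B_U]}(V) = \Hom^\bullet_{\sfT(U)(V)}\bigl((A_U)_V, (B_U)_V\bigr).\]
On the left hand side, $[A, B]^\#\vert_U$ is the restriction to $U$ of the sheafification of $\prescript{}{\mathrm{p}}{[A,B]}$ on $\Spc(\sfT^c)$. Since $U$ is quasi-compact open, every quasi-compact open $V \subseteq U$ is also quasi-compact open in $\Spc(\sfT^c)$, and a standard argument (restriction to an open subset commutes with sheafification) identifies $[A,B]^\#\vert_U$ with the sheafification on $U$ of the restricted presheaf $\prescript{}{\mathrm{p}}{[A,B]}\vert_{\qbasic(U)}$, whose value at $V$ is
\[\prescript{}{\mathrm{p}}{[A,B]}(V) = \Hom^\bullet_{\sfT(V)}(A_V, B_V).\]

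Next I would show that these two presheaves on $\qbasic(U)$ are naturally isomorphic. The key input is the identification $(\sfT(U))(V) \simeq \sfT(V)$ for $V$ a quasi-compact open subset of $U$, recorded in the construction of $\sfT(U)$. Concretely, localising $\sfT$ first at $\sfT_{Z_U}$ (where $Z_U$ is the closed complement of $U$) and then at the preimage of the closed complement of $V$ in $U$ produces the same quotient as localising directly at $\sfT_{Z_V}$, since both correspond to killing exactly the compacts supported outside of $V$. Under this equivalence, the images $(A_U)_V$ and $(B_U)_V$ get identified with $A_V$ and $B_V$ respectively, so
\[\Hom^\bullet_{\sfT(U)(V)}\bigl((A_U)_V, (B_U)_V\bigr) \cong \Hom^\bullet_{\sfT(V)}(A_V, B_V),\]
naturally in inclusions $W \subseteq V$ of quasi-compact opens of $U$.

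Finally, since sheafification is functorial and takes naturally isomorphic basis-presheaves to naturally isomorphic sheaves, we obtain the desired isomorphism $[A,B]^\#\vert_U \cong [A_U, B_U]^\#$ on $U$.

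The only real subtlety is the verification that restriction of a sheaf on $\Spc(\sfT^c)$ to the quasi-compact open $U$ really does correspond to sheafifying the restricted basis-presheaf on $U$. This is formal but deserves to be spelt out: one checks it at the level of stalks, using that for $\mcP \in U$ the quasi-compact opens of $\Spc(\sfT^c)$ containing $\mcP$ which are contained in $U$ are cofinal among all quasi-compact opens containing $\mcP$, so the stalk computations agree on both sides.
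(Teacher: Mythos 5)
Your proposal is correct and follows essentially the same route as the paper: compare the two presheaves on the basis of quasi-compact opens $V \subseteq U$ via the identification $(\sfT(U))(V) \simeq \sfT(V)$, then conclude by uniqueness of sheafification. The only difference is that you spell out the (formal) compatibility of restriction with sheafification via stalks and cofinality, a point the paper passes over by simply identifying $[A,B]^\#\vert_U(V)$ with the sections over the basic open $V$.
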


\begin{proof}
	It suffices to show that the sheaves agree on all quasi-compact open subsets $V \subseteq U$, from which the uniqueness of sheafification will complete the proof.
	Let $V \subseteq U$ be such a quasi-compact open. Recall that $\sfT(V) \simeq (\sfT(U))(V)$. Therefore
	\begin{align*}
		[A_U, B_U]^\#(V)&=\Hom^\bullet_{(\sfT(U))(V)}((A_U)_V,(B_U)_V)\\
		&\cong \Hom^\bullet_{\sfT(V)}(A_V,B_V)\\
		&=[A,B]^\# (V)\\
		&=[A,B]^\#\vert_{U}(V), 
	\end{align*}
	where the last two equalities hold as both $V$ and $U$ are quasi-compact basic open subsets. 
\end{proof}

\begin{thm}
	Let $\sfT$ be schematic. Then for every $X \in \sfT$, the sheaf $[\unit, X]^\#$ is quasi-coherent on $\Spec(\sfT).$
\end{thm}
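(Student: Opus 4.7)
The plan is to reduce the schematic case to the affine case that has just been established, using Lemma \ref{localres} to transfer the sheaf to the pieces of the affine cover and appealing to the local nature of quasi-coherence.

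First I would fix an open cover $\{U_i\}$ of $\Spc(\sfT^c)$ witnessing the schematic hypothesis, so that each $\sfT(U_i)$ is an affine big tt-category with $\Spec(\sfT(U_i)) \cong (U_i, \mcO_\sfT|_{U_i})$. Without loss of generality one can refine this to a cover by quasi-compact open subsets, since such subsets form a basis for the topology and the property of $\rho_i$ being an isomorphism persists after further localisation along a quasi-compact open (because passing to a smaller quasi-compact open just iterates the construction $\sfT^c(-)$, which is compatible with the formation of $R_{\sfT(-)}$ by the same colimit computation used in Lemma \ref{generalstalk}).

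Next, for each such $U_i$ I apply Lemma \ref{localres} to obtain the identification
\[
[\unit, X]^\# \big|_{U_i} \;\cong\; [\unit_{U_i}, X_{U_i}]^\#,
\]
where the right-hand side is the associated sheaf functor computed inside the big tt-category $\sfT(U_i)$ acting on itself, and $\unit_{U_i}$ is the tensor unit of $\sfT(U_i)$ (the image of $\unit$ under the monoidal localisation functor $q_{U_i}$). Since $\sfT(U_i)$ is affine, the affine case corollary applies and gives that $[\unit_{U_i}, X_{U_i}]^\#$ is quasi-coherent on $\Spec(\sfT(U_i))$. Under the identification $\Spec(\sfT(U_i)) \cong (U_i, \mcO_\sfT|_{U_i})$ this means $[\unit, X]^\# |_{U_i}$ is quasi-coherent as a sheaf of $\mcO_\sfT|_{U_i}$-modules.

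Finally, quasi-coherence is by definition a local property: if a sheaf of modules on a ringed space restricts to a quasi-coherent sheaf on every member of an open cover, then it is itself quasi-coherent, since the required presentations $\mcO^{\oplus I} \to \mcO^{\oplus J} \to \mcF \to 0$ may be pulled back from the cover $\{U_i\}$ to any refinement needed to verify the definition globally. This finishes the argument. The only real obstacle I anticipate is the bookkeeping in the first paragraph: making sure that the locally ringed space structure $\Spec(\sfT(U_i))$ really does coincide with the restriction $(U_i, \mcO_\sfT|_{U_i})$, which requires knowing that the structure presheaf is preserved under the construction $\sfT \rightsquigarrow \sfT(U_i)$; this is a direct consequence of the identification $(\sfT^c(U))(V) \cong \sfT^c(V)$ together with the definition of $\prescript{}{\mathrm{p}}{\mcO}_\sfT$ in terms of $R_{\sfT(-)}$.
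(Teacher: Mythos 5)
Your proposal is correct and follows essentially the same route as the paper: restrict along the schematic cover, use Lemma \ref{localres} to identify $[\unit,X]^\#\vert_{U_i}$ with the associated sheaf computed in the affine category $\sfT(U_i)$, apply the affine-case corollary to get $[\unit_{U_i},X_{U_i}]^\#\cong\tilde{M}_i$ with $M_i=\Hom_{\sfT(U_i)}(\unit,X)$, and conclude by the local nature of quasi-coherence. Your extra "refine to a quasi-compact cover" step is not needed (the construction $\sfT(U_i)$ in the schematic definition already presupposes quasi-compact opens), but it does no harm.
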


\begin{proof}
	As $\sfT$ is schematic we are given an open cover $\{U_i\}$ of $\Spc(\sfT^c).$ Fixing $i$, restricting from $\Spc(\sfT^c)$ to $U_i$ and applying the affine result for the affine category $\sfT(U_i)$ and using Lemma \ref{localres} we have
	\[[\unit,X]^\#\vert_{U_i} \cong [\unit_{U_i},X_{U_i}]^\# \cong \tilde{M}_i,\]
	where $M_i = \Hom_{\sfT(U_i)}(\unit, X).$ This is the definition of a quasi-coherent sheaf on a scheme, completing the proof.
\end{proof}

The definitions of affine and schematic provide a coarse grading through which to examine tensor triangular categories. Taking further inspiration from algebraic geometry, we can attach additional conditions to the schematic structure. In particular we translate the notion of being quasi-affine to the tensor-triangulated setting.

\begin{defn}
	Let $\sfT$ be a big tt-category. We say that $\sfT$ is \emph{quasi-affine} if there is an open cover $(U_i)_{i \in I}$ of $\Spc(\sfT^c)$ such that 
	\begin{enumerate}
		\item the cover $(U_i)_{i\in I}$ realises $\sfT$ as a schematic category,
		\item for each $i \in I$ there exists a morphism $s_i \in R_\sfT$ such that $U_i=U(\cone(s_i))$.
	\end{enumerate}
\end{defn}

\begin{ex}
	To motivate the definition observe that if $(X,\mcO_X)$ is a quasi-affine scheme, then the tt-category $\sfD^{\perf}(X)$ is quasi-affine. Indeed, as $X$ is quasi-affine, the structure sheaf $\mcO_X$ is ample and so for each point $x \in X$ there exists an element $s \in \Gamma(X, \mcO_X)$ such that $x \in X_s$. Now by the main result of \cite{BaPresheaves} there is an isomorphism $(\sfD^{\perf}(X), \mcO_{\sfD^{\perf}(X)}) \cong (X, \mcO_X)$ of schemes, and that the associated comparison map is an isomorphism when restricted to affine subsets. The collection of the $X_s$, each of which are affine, therefore form a cover realising the schematic structure of $\sfD^{\perf}(X)$. Now there is an equality \[\Gamma(X, \mcO_X)=\Hom_{\sfD^{\perf}(X)}(\mcO_X, \mcO_X)=R_{\sfD^{\perf}(X)}\]
	and moreover $X_s \cong U(\cone(s))$. Therefore the cover realising the schematic property is of the required form and we conclude that $\sfD^{\perf}(X)$ is quasi-affine.   
\end{ex}

\begin{prop}
	Let $\sfT$ be quasi-affine. Then the collection
	\[\{U(\cone(s)) \ \vert \ s \in R_\sfT)\}\]
	is a basis for the usual topology on $\Spc(\sfT^c)$.
\end{prop}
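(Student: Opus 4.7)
The plan is to show that for every open $V \subseteq \Spc(\sfT^c)$ and every $\mcP \in V$ there exists some $s \in R_\sfT$ with $\mcP \in U(\cone(s)) \subseteq V$. First I would use the quasi-affine hypothesis to pick an index $i \in I$ with $\mcP \in U_i$, where by assumption $U_i = U(\cone(s_i))$ for some $s_i \in R_\sfT$, and replace $V$ by the open neighbourhood $V \cap U_i$ of $\mcP$ inside $U_i$. Since $\sfT(U_i)$ is affine, Balmer's comparison map $\rho_i : \Spc(\sfT^c(U_i)) \to \Spec(R_{\sfT(U_i)})$ is a homeomorphism, and the sets $\{D(t) : t \in R_{\sfT(U_i)}\}$ form a basis for the topology of $\Spec(R_{\sfT(U_i)})$. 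Consequently we may choose $t \in R_{\sfT(U_i)}$ with $\rho_i(\mcP) \in D(t) \subseteq \rho_i(V \cap U_i)$, and pulling back under $\rho_i$ the homeomorphism $U_i \cong \Spc(\sfT^c(U_i))$ produces a basic open $U(\cone(t)) \subseteq V \cap U_i$ containing $\mcP$.

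Next I would rewrite $U(\cone(t))$ in terms of an element of $R_\sfT$. By the corollary to Theorem \ref{hpstheorem} (applied to $s_i$), we have a canonical isomorphism $R_{\sfT(U_i)} \cong R_\sfT[s_i^{-1}]$, so $t$ may be written as $q_i(r)\,q_i(s_i)^{-n}$ for some $r \in R_\sfT$ and $n \geq 0$. Since $q_i(s_i)$ is an isomorphism in $\sfT(U_i)$, precomposition with it does not change the cone up to isomorphism, so $\cone(t) \cong \cone(q_i(r))$ in $\sfT^c(U_i)$. Translating through the identification $U_i \cong \Spc(\sfT^c(U_i))$, this yields the equality $U(\cone(t)) = U(\cone(r)) \cap U_i$ of subsets of $\Spc(\sfT^c)$.

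It then remains to absorb the intersection with $U_i$ into a single cone. I would verify the identity
\[
U(\cone(fg)) = U(\cone(f)) \cap U(\cone(g)) \qquad (f,g \in R_\sfT),
\]
by invoking the construction of $\rho_\sfT^\bullet$: the inclusion $\supseteq$ follows because $\cone(fg)$ lies in the thick tensor-ideal generated by $\cone(f)$ and $\cone(g)$ (via an octahedral argument on the composite $\unit \xrightarrow{g} \unit \xrightarrow{f} \unit$), while the inclusion $\subseteq$ is the statement that $\rho^\bullet_\sfT(\mcP)$ is prime, so $fg \notin \rho^\bullet_\sfT(\mcP)$ iff $f,g \notin \rho^\bullet_\sfT(\mcP)$. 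Setting $s := r \cdot s_i \in R_\sfT$ then gives
\[
\mcP \in U(\cone(s)) = U(\cone(r)) \cap U(\cone(s_i)) = U(\cone(t)) \subseteq V,
\]
finishing the proof. The main obstacle, and the step I would verify most carefully, is the bookkeeping around the identification $U(\cone(t)) = U(\cone(r)) \cap U_i$: one must check both that $q_i(s_i)$ being invertible really leaves the cone unchanged up to isomorphism, and that the homeomorphism $U_i \cong \Spc(\sfT^c(U_i))$ sends $U(\cone(q_i(r)))$ to $U(\cone(r)) \cap U_i$ under the standard correspondence between primes of the Verdier quotient and primes of $\sfT^c$ contained in $U_i$.
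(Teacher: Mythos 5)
Your proposal is correct and follows essentially the same route as the paper: use the affineness of each $\sfT(U_i)$ to pull back the distinguished opens of $\Spec(R_{\sfT(U_i)})\cong\Spec(R_\sfT[s_i^{-1}])$ and absorb the denominator by multiplying by $s_i$, so that the resulting basic opens are of the form $U(\cone(rs_i))$ with $rs_i\in R_\sfT$. The only difference is organisational — you verify the basis criterion pointwise and spell out the identities $U(\cone(fg))=U(\cone(f))\cap U(\cone(g))$ and $U(\cone(q_i(r)))\leftrightarrow U(\cone(r))\cap U_i$ that the paper leaves to the reader's "unwinding".
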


\begin{proof}
	For a morphism $f$ in $\sfT^c$ we will write $U(f)=U(\cone(f))$. As $\sfT$ is quasi-affine there is a collection of elements $\mcS=(s_i)_i \subseteq R_\sfT$ such that for each index $i$ we have
	\begin{enumerate}
		\item $\Spc(\sfT^c)=\bigcup_{i}U(s_i)$\\
		\item The comparison map $\Spc(\sfT^c(U(s_i)))\xrightarrow{\rho_{U(s_i)}} \Spec(R_{\sfT(U(s_i))})$ is an isomorphism.
	\end{enumerate} 
	Fix the index $i$. Let $S=\{\id, s_i, s^2_i,...\}$ be the multiplicative set generated by $s_i$. By Proposition \ref{propcomsec} we have
	\[R_{\sfT(U(s_i))}\cong S^{-1}R_\sfT.\]
	By assumption the comparison map at $U(s_i)$ is an isomorphism, and so
	\[\Spc(\sfT^c(U(s_i)))\cong \Spec(S^{-1}R_\sfT).\]
	A basis for $\Spec(S^{-1}R_\sfT)$ is given by the collection
	\[\{D (r/s^n_i)\ \vert \ r \in R_\sfT, n \in \ZZ\}.\]
	Using the identification $D(s)\cong \Spec(S^{-1}R_\sfT)$, one can see that for a distinguished basic open $D(r/s^n_i)$ we have
	\[D(r/s^n_i)=D(r)\cap D(s_i) = D(rs_i).\]

	As the comparison map is an isomorphism over $\sfT^c(U(s_i))$ we can take the preimage under $\rho_{U(s_i)}$ to obtain a basis
	\[\{U(rs_i) \ \vert \ r \in R_\sfT\}\]
	for $\Spc(\sfT^c(U(s_i)))$.
	We consider the collection
	\[\mcB=\{U(rs_i) \ \vert \ r \in R_\sfT, s_i \in \mcS\}.\]
	Unwinding the definition of a basis, one can then observe that $\mcB$ is a basis for a topology on $\Spc(\sfT^c)$ and that it coincides with the standard topology generated by the usual basis $\{U(x)\ \vert \ x \in \sfT^c\}$.

\end{proof}

\begin{cor}\label{quasiaffinecomparison}
	If $\sfT$ is quasi-affine then the natural comparison map is injective:
	
	\[\Spc(\sfT^c) \xhookrightarrow{\rho} \Spec(R_\sfT).\]
	
	If $\sfT$ is quasi-affine and the unit $\unit$ satisfies $\Hom(\unit, \Sigma^i \unit)=0$ for all $i >0$, then the natural comparison map is an isomorphism and $\sfT$ is affine.
\end{cor}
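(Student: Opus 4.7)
My plan is to prove injectivity by localising to the quasi-affine cover, and then, under the additional vanishing hypothesis, to upgrade to a global isomorphism of locally ringed spaces by establishing surjectivity via a Koszul-style argument.

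For injectivity, I would take $\mcP, \mcQ \in \Spc(\sfT^c)$ with $\rho(\mcP) = \rho(\mcQ)$. Since $\sfT$ is quasi-affine, some $s_i$ from the given cover satisfies $\mcP \in U(\cone(s_i))$, i.e.\ $\cone(s_i) \in \mcP$, which is equivalent to $s_i \notin \rho(\mcP)$. The assumption $\rho(\mcP) = \rho(\mcQ)$ then forces $\mcQ \in U(\cone(s_i))$ as well. By naturality together with the preceding proposition, the restriction $\rho|_{U(\cone(s_i))}$ factors as the isomorphism $\rho_{U(\cone(s_i))}$ (from the schematic hypothesis) composed with the open immersion $D(s_i) \hookrightarrow \Spec(R_\sfT)$, where $D(s_i) \cong \Spec(S_i^{-1}R_\sfT)$ for $S_i = \{1, s_i, s_i^2, \dots\}$. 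Hence this restriction is a homeomorphism onto $D(s_i)$, forcing $\mcP = \mcQ$.

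For the second claim, I would first use quasi-compactness of $\Spc(\sfT^c)$ to extract a finite subcover indexed by $s_1, \dots, s_n$. The closed complements $\supp(\cone(s_j))$ have empty intersection, and the multiplicativity of $\supp$ together with the fact that support is faithful on compact objects of the rigid category $\sfT^c$ yields $K_n := \cone(s_1) \otimes \cdots \otimes \cone(s_n) = 0$. Writing $K_k := \cone(s_1) \otimes \cdots \otimes \cone(s_k)$ with $K_0 = \unit$, the triangles $K_{k-1} \xrightarrow{s_k} K_{k-1} \to K_k \to \Sigma K_{k-1}$ and the long exact sequences they induce under $\Hom_\sfT(\unit, \Sigma^\ast -)$ support an induction: the hypothesis $\Hom_\sfT(\unit, \Sigma^i \unit) = 0$ for $i > 0$ is the base case $k = 0$, and if $\Hom_\sfT(\unit, \Sigma^i K_{k-1})$ vanishes for $i > 0$ and equals $R_\sfT/(s_1, \dots, s_{k-1})$ for $i = 0$, the long exact sequence transfers the same vanishing to $K_k$ while killing one further generator in degree zero. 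Setting $k = n$ and using $K_n = 0$ yields $R_\sfT/(s_1, \dots, s_n) = 0$, so the $s_j$ generate the unit ideal of $R_\sfT$.

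Consequently $\Spec(R_\sfT) = \bigcup_j D(s_j)$, which together with injectivity makes $\rho$ a bijection, and hence a homeomorphism since it is one on each member of the finite open cover $\{U(\cone(s_j))\}$. The local isomorphisms of structure sheaves guaranteed by the schematic hypothesis glue into a global isomorphism, so $(\rho, r)$ is an isomorphism of locally ringed spaces and $\sfT$ is affine. The main obstacle is the degree-by-degree bookkeeping in the inductive Koszul step: the vanishing $\Hom_\sfT(\unit, \Sigma^i \unit) = 0$ for $i > 0$ is precisely what prevents higher graded pieces from contaminating the computation of the degree-zero quotient $R_\sfT/(s_1, \dots, s_n)$, which is what drives the whole argument.
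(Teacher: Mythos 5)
Your argument is correct, but it follows a genuinely different route from the paper's. The paper disposes of both halves by citation: injectivity follows from the preceding proposition (the sets $U(\cone(s))$ with $s\in R_\sfT$ form a basis of $\Spc(\sfT^c)$) together with the result quoted as [DSgraded2, Proposition 3.11], and surjectivity under the hypothesis $\Hom(\unit,\Sigma^i\unit)=0$ for $i>0$ is exactly [BaSSS, Theorem 7.13]; the two facts are then combined. You instead argue directly. For injectivity you exploit that $\rho$ detects chart membership, since $\cone(s_i)\in\mcP$ if and only if $s_i\notin\rho(\mcP)$, so two primes with the same image lie in a common $U(\cone(s_i))$; there, naturality of the comparison map together with the schematic isomorphism and the identification $R_{\sfT(U(\cone(s_i)))}\cong S_i^{-1}R_\sfT$ exhibit $\rho\vert_{U(\cone(s_i))}$ as a homeomorphism onto $D(s_i)$, which forces the two primes to coincide. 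For surjectivity you take a finite subcover (quasi-compactness of $\Spc(\sfT^c)$), observe $\cone(s_1)\otimes\cdots\otimes\cone(s_n)=0$ from multiplicativity of support and rigidity, and run a Koszul induction in which the connectivity hypothesis keeps positive-degree terms out of the long exact sequences, yielding $R_\sfT=(s_1,\dots,s_n)$ and hence $\Spec(R_\sfT)=\bigcup_j D(s_j)\subseteq\operatorname{im}\rho$. Your route is self-contained where the paper leans on two external theorems, and it produces the homeomorphism constructively chart by chart; note that once $\rho$ is a homeomorphism you could simply invoke the proposition already quoted in the paper (that a homeomorphism $\rho$ upgrades to an isomorphism of locally ringed spaces) rather than gluing the structure-sheaf isomorphisms by hand, which is the one place where your write-up is a little informal though repairable via the compatibility $\mcO_\sfT\vert_{U_i}\cong\mcO_{\sfT(U_i)}$. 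The trade-off is that your surjectivity step genuinely uses quasi-affineness, whereas the cited theorem gives surjectivity from connectivity alone, so the paper's version isolates the role of each hypothesis more cleanly.
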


\begin{proof}
	By the previous proposition, if $\sfT$ is quasi-affine then the collection $\{U(\cone(s)) \ \vert \ s \in R_\sfT\}$ is a basis for the usual topology on $\Spc(\sfT^c)$. Therefore by \cite[Proposition 3.11]{DSgraded2}, the comparison map is injective.
	For the second part, assume that $\Hom(\unit, \Sigma^i \unit)=0$. Then by \cite[Theorem 7.13]{BaSSS} the comparison map is surjective. Combining this with the quasi-affine assumption we conclude that the comparison map is an isomorphism.
	
\end{proof}

We can use affine categories to give concrete examples of the bad interactions between associated sheaf functors and invertibility of objects.

\begin{defn}
	An \emph{invertible} sheaf on a locally ringed space $(X,\mcO_X)$ is a sheaf $\mcF$ which is locally free of rank $1$. That is, for each point $x \in X$ there is an open neighbourhood $U$ of $x$ such that $\mcF\vert_U \cong \mcO_U.$ 
\end{defn}

First we give an example of an object which is invertible but its associated sheaf is not.

\begin{ex}
	Let $R$ be a commutative ring and consider the category $\mcD^{\perf}(R).$ Define
	\begin{align*}
		X&=\cdots \to 0 \to 0 \to 0 \to 0  \to 0 \to R \to 0 \to \cdots,\\
		Y&= \cdots \to 0 \to R \to 0 \to 0 \to 0 \to 0 \to 0 \to \cdots,
	\end{align*}
	with $X$ concentrated in degree $2$ and $Y$ concentrated in degree $-2$. In $\mcD^{\perf}(R)$ the tensor unit is the complex with $R$ concentrated in degree $0$. Clearly $X\otimes Y \cong \unit.$ As $\mcD^{\perf}(R)$ is affine we have
	\begin{align*}
		[\unit,X]^\# &\cong \widetilde{H^0(X)}\cong 0,\\
		[\unit,Y]^\#&\cong \widetilde{H^0(Y)}\cong 0.
	\end{align*}
	Hence the associated sheaves are not invertible.
\end{ex}

Now we give an example where $X$ is not invertible but $[\unit,X]^\#$ is invertible.

\begin{ex}
	We again consider $\mcD^{\perf}(R)$ and define
	\[X=\cdots \to 0 \to R \to 0 \to R \to 0 \to \cdots,\]
	concentrated in degrees $-2$ and $0$. We have $[\unit, X]^\# \cong \tilde{H^0(X)} \cong \mcO_{\mcD^{\perf}(R)}.$ Therefore $[\unit, X]^\#$ is invertible. However, $X$ is not invertible in $\mcD^{\perf}(R).$
\end{ex}

\section{Mayer-Vietoris covers}
There is a process, defined in \cite{bfgluing}, which allows gluing of objects inside a given triangulated category. We show that gluing behaves well with the associated sheaf functor $[\unit, -]^\#$ with respect to supports and quasi-coherence.
Throughout we consider $\sfT$ a tt-category acting on itself. 

\begin{defn}\cite[2.1]{bfgluing}
	Let $\sfT$ be a triangulated category. A \emph{formal Mayer-Vietoris cover} of $\sfT$ is a pair of thick triangulated subcategories $\mcS_1$ and $\mcS_2$ such that $\Hom_\sfT(X_1,X_2)=\Hom_\sfT(X_2,X_1)=0$ for every pair of objects $X_1\in\mcS_1$ and $X_2 \in \mcS_2.$ We denote by $\mcS_1\oplus \mcS_2$ the thick subcategory whose objects are of the form $X=X_1\oplus X_2$ with $X_1 \in \mcS_1$ and $X_2 \in \mcS_2.$ This setup is referred to as a \emph{Mayer-Vietoris situation.}
\end{defn}

\begin{rem}
	Note that the defintion of a Mayer-Vietoris cover makes no mention of a tensor structure on $\sfT$. Further applications of these covers can be found in \cite{rouquier08}.
\end{rem}
Such a situation can be presented in a square diagram:
\[
\begin{tikzcd}
	\sfT \arrow[r] \arrow[d] & \sfT/\mcS_1 \arrow[d] \\
	\sfT/\mcS_2 \arrow[r]           & \sfT/(\mcS_1\oplus\mcS_2)          
\end{tikzcd}
\]
The content of the next theorem was first in \cite[4.3]{bfgluing} but we present the form given in \cite[2.11]{BaRickard}

\begin{thm}
	Suppose we have a formal Mayer-Vietoris situation as in the above definition. Assume that the quotients in the associated square diagram have small hom objects. Let $X_1 \in \sfT/\mcS_1$ and $X_2 \in \sfT/\mcS_2$ be two objects and $\sigma: X_1 \xrightarrow{\sim}X_2$ an isomorphism in $\sfT/(\mcS_1 \oplus \mcS_2).$ Then there exists an object $X \in \sfT$ and isomorphisms $X\cong X_i$ in $\sfT/\mcS_i$ for $i=1,2,$ compatible with $\sigma$ in $\sfT/(\mcS_1\oplus \mcS_2).$ The object $X$ is unique up to possibly non-unique isomorphism and is called a \emph{gluing} of $X_1$ and $X_2$ along $\sigma.$
\end{thm}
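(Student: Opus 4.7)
The plan is to reduce the problem to lifting morphisms through the Verdier quotients involved and then to assemble $X$ as the fibre of a single morphism in $\sfT$. I would begin by choosing arbitrary lifts $\tilde X_1, \tilde X_2 \in \sfT$ of the objects $X_1, X_2$ along the quotient functors $\sfT \to \sfT/\mcS_i$. The isomorphism $\sigma$ is represented in $\sfT/(\mcS_1 \oplus \mcS_2)$ by a roof $\tilde X_1 \xleftarrow{s} Y \to \tilde X_2$ in $\sfT$ whose left leg has cone lying in $\mcS_1 \oplus \mcS_2$.

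The crucial step is to use the orthogonality hypothesis to straighten this roof to a genuine morphism $\varphi : \tilde X_1' \to \tilde X_2'$ in $\sfT$ between modified lifts, still isomorphic to $X_i$ in $\sfT/\mcS_i$, which represents $\sigma$ on the nose. The point is that any object of $\mcS_1 \oplus \mcS_2$ splits as $C_1 \oplus C_2$ with $C_i \in \mcS_i$, and the vanishings $\Hom_\sfT(\mcS_1,\mcS_2) = 0 = \Hom_\sfT(\mcS_2,\mcS_1)$ allow one to absorb each $C_i$ into $\tilde X_i$ via a mapping cone construction without disturbing the other factor, because the obstruction maps are forced to be zero. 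This is the step in which the Mayer--Vietoris hypothesis does its essential work.

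Once $\varphi$ is in hand, I would define $X$ by the distinguished triangle
\[X \to \tilde X_1' \xrightarrow{\varphi} \tilde X_2' \to \Sigma X.\]
Applying the quotient $\sfT \to \sfT/\mcS_1$ turns $\tilde X_2'$ into an object that becomes a copy of $X_2$ in $\sfT/(\mcS_1 \oplus \mcS_2)$, via $\sigma$; a short diagram chase in the resulting triangle, combined once more with the orthogonality to discard contributions in $\mcS_1$, shows that $X \cong X_1$ in $\sfT/\mcS_1$, and symmetrically $X \cong X_2$ in $\sfT/\mcS_2$. Compatibility with $\sigma$ in the further quotient is built into the construction by construction of $\varphi$.

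For uniqueness, suppose $X'$ is a second gluing with compatible isomorphisms $\alpha_i : X' \xrightarrow{\sim} X_i$ in $\sfT/\mcS_i$. I would lift the pair of composites $X \xrightarrow{\sim} X_i \xleftarrow{\sim} X'$ to a single morphism $X \to X'$ in $\sfT$, again using the orthogonality to straighten the resulting roofs. The cone $C$ of this morphism has vanishing image in both $\sfT/\mcS_1$ and $\sfT/\mcS_2$, so $C \in \mcS_1 \cap \mcS_2$; but any object of this intersection $Z$ satisfies $\Hom_\sfT(Z,Z) = 0$ by the orthogonality, and hence is zero. Thus the morphism is an isomorphism, but the non-canonical choices of lifts made along the way explain the potential non-uniqueness of this isomorphism. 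The main obstacle throughout is the straightening step: representing and manipulating $\sigma$ simultaneously in the three Verdier quotients requires careful bookkeeping of representatives, and it is precisely the Mayer--Vietoris orthogonality that makes these manipulations go through.
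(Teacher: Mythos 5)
The construction of the glued object is where your argument breaks down. You define $X$ by the triangle $X \to \tilde X_1' \xrightarrow{\varphi} \tilde X_2' \to \Sigma X$ with $\varphi$ a morphism of $\sfT$ representing $\sigma$ on the nose. But then $\varphi$ becomes the isomorphism $\sigma$ in $\sfT/(\mcS_1\oplus\mcS_2)$, so its fibre $X$ becomes zero there, i.e.\ $X$ lies in $\mcS_1\oplus\mcS_2$; such an object cannot restrict to $X_1$ and $X_2$ except in degenerate cases. Concretely, applying $\sfT\to\sfT/\mcS_1$ to your triangle, the map $X\to\tilde X_1'$ can only be an isomorphism in $\sfT/\mcS_1$ if $\tilde X_2'$ vanishes there, i.e.\ $\tilde X_2'\in\mcS_1$, which forces $X_2$ (and hence $X_1$, via $\sigma$) to be zero in $\sfT/(\mcS_1\oplus\mcS_2)$; in the trivial situation $\mcS_1=\mcS_2=0$ your recipe returns the fibre of an isomorphism, namely $0$, rather than $X_1$. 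The correct object is a homotopy pullback over the ``intersection'', not the fibre of a single map between the two pieces: one must produce a cospan $X_1'\xrightarrow{a} Z \xleftarrow{b} X_2'$ in $\sfT$ with $\cone(a)\in\mcS_2$, $\cone(b)\in\mcS_1$, $X_i'\cong X_i$ in $\sfT/\mcS_i$, and $b^{-1}a=\sigma$ in the double quotient, and then define $X$ by the Mayer--Vietoris triangle $X\to X_1'\oplus X_2'\xrightarrow{(a,-b)} Z\to\Sigma X$; an octahedron then gives $\cone(X\to X_1')\cong\cone(b)\in\mcS_1$ and $\cone(X\to X_2')\cong\cone(a)\in\mcS_2$, which is exactly what the theorem demands. (Note the paper itself does not prove this statement; it quotes \cite{bfgluing} and \cite{BaRickard}, where the argument follows this pullback pattern.)

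Two further points. First, your ``straightening'' step is misattributed: the obstruction to extending the second leg of the roof over the modified source is a map from an object of $\mcS_2$ (resp.\ $\mcS_1$) into the general object $\tilde X_2$, about which the orthogonality hypothesis says nothing; one can indeed straighten, but by coning the obstruction off into the target (modifying $\tilde X_2$ by an object of $\mcS_2$), not because the obstruction vanishes --- and this manoeuvre needs no Mayer--Vietoris hypothesis at all. The places where orthogonality genuinely works are: $\mcS_1\cap\mcS_2=0$ (your observation that $\Hom_\sfT(Z,Z)=0$ for $Z\in\mcS_1\cap\mcS_2$ is correct and is the right ingredient for uniqueness), the isomorphisms $\Hom_\sfT(x,s)\cong\Hom_{\sfT/\mcS_1}(x,s)$ for $s\in\mcS_2$ (and the three symmetric variants), and the resulting Mayer--Vietoris exact sequence relating $\Hom_\sfT$ to the homs in the three quotients. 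It is these that let one produce the cospan above from $\sigma$, glue morphisms, and lift your comparison map $X\to X'$ in the uniqueness argument, a step you currently only assert.
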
 
We may denote a gluing as in the theorem by $X=X_1 \cup_\sigma X_2.$ One may also glue morphisms together under the same hypotheses \cite[3.5]{bfgluing}.

The gluing of objects can be used in the context of tt-categories:

\begin{cor}\cite[5.15]{BaRickard}
	If $\Spc(\sfT^c)=U_1 \cup U_2$ with each $U_i$ a quasi-compact open, then 
	\[
	\begin{tikzcd}
		\sfT \arrow[r] \arrow[d] & \sfT(U_1) \arrow[d] \\
		\sfT(U_2) \arrow[r]           & \sfT(U_1 \cap U_2)         
	\end{tikzcd}
	\]
	satisfies gluing of objects and morphisms as above.
\end{cor}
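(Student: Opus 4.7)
The plan is to recognize this square as an instance of the formal Mayer-Vietoris gluing theorem stated just above. Setting $Z_i := \Spc(\sfT^c) \setminus U_i$, each $Z_i$ is a Thomason subset (closed with quasi-compact open complement), and defining $\mcS_i := \sfT_{Z_i} = \loc(\sfT^c_{Z_i})$ produces smashing thick tensor-ideals by Theorem~\ref{millerneeman} with $\sfT/\mcS_i \simeq \sfT(U_i)$. Since $\Spc(\sfT^c)$ is spectral, $U_1 \cap U_2$ is again a quasi-compact open, so $\sfT(U_1 \cap U_2) = \sfT/\sfT_{Z_1 \cup Z_2}$ is defined, and all quotients in the square have small hom-sets by Theorem~\ref{millerneeman}(2). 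With these choices the square of the corollary will match the square of the formal Mayer-Vietoris theorem once the two verifications below are carried out.

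The first and main verification is the Hom-orthogonality of $\mcS_1$ and $\mcS_2$. The input is that $U_1 \cup U_2 = \Spc(\sfT^c)$ forces $Z_1 \cap Z_2 = \varnothing$, so by the standard Balmer-Favi calculus of tensor idempotents $\Gamma_{Z_1} \unit \otimes \Gamma_{Z_2} \unit \cong \Gamma_{Z_1 \cap Z_2} \unit = 0$. For $X_i \in \mcS_i$ we have $X_i \cong \Gamma_{Z_i} \unit \otimes X_i$, so tensoring the localisation triangle $\Gamma_{Z_2} \unit \to \unit \to L_{U_2} \unit \to$ with $X_1$ kills the leftmost term and yields $X_1 \cong L_{U_2} X_1$, placing $X_1$ in the local subcategory $L_{U_2} \sfT$. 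The defining property $(i_{2\ast} \mcS_2)^\perp = j_{2\ast} L_{U_2} \sfT$ of the localisation sequence for $\mcS_2$ then forces $\Hom_\sfT(X_2, X_1) = 0$; the symmetric argument swapping the roles of the indices gives $\Hom_\sfT(X_1, X_2) = 0$, so $(\mcS_1,\mcS_2)$ is a formal Mayer-Vietoris cover.

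The second verification is to identify the fourth corner $\sfT/(\mcS_1 \oplus \mcS_2)$ with $\sfT(U_1 \cap U_2)$, which reduces to showing $\mcS_1 \oplus \mcS_2 = \sfT_{Z_1 \cup Z_2}$. The Mayer-Vietoris triangle $\Gamma_{Z_1 \cap Z_2} \unit \to \Gamma_{Z_1} \unit \oplus \Gamma_{Z_2} \unit \to \Gamma_{Z_1 \cup Z_2} \unit \to$ of tensor idempotents collapses, since $\Gamma_{Z_1 \cap Z_2} \unit = 0$, to a splitting $\Gamma_{Z_1 \cup Z_2} \unit \cong \Gamma_{Z_1} \unit \oplus \Gamma_{Z_2} \unit$. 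Any $X \in \sfT_{Z_1 \cup Z_2}$ therefore satisfies $X \cong \Gamma_{Z_1 \cup Z_2} \unit \otimes X \cong \Gamma_{Z_1} X \oplus \Gamma_{Z_2} X$ with summands in $\mcS_1$ and $\mcS_2$ respectively, giving one inclusion; the other is immediate from $Z_i \subseteq Z_1 \cup Z_2$. Applying the formal Mayer-Vietoris gluing theorem then delivers the corollary.

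The step I expect to require the most care is the orthogonality verification, which looks asymmetric: it requires routing $X_1$ into $L_{U_2} \sfT$ via a triangle argument, rather than any dual or internal-hom manipulation that would be unavailable since $\Gamma_{Z_i} \unit$ need not be rigid. Once the correct pairing of the triangle $\Gamma_{Z_2}\unit\to\unit\to L_{U_2}\unit$ with the localisation-sequence vanishing for $\mcS_2$ is chosen, the remaining steps are formal consequences of the Balmer-Favi idempotent calculus.
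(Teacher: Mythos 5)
Your argument is correct, but note that the paper offers no proof of this statement to compare against: it is quoted verbatim from Balmer--Favi (the cited [5.15]), so the only internal content is the citation, and your write-up is essentially a reconstruction of the argument from that source. The reduction to the formal Mayer--Vietoris theorem via $\mcS_i=\sfT_{Z_i}=\loc(\sfT^c_{Z_i})$, the orthogonality check by routing $X_1$ into the $L_{U_2}$-local objects, and the identification $\mcS_1\oplus\mcS_2=\sfT_{Z_1\cup Z_2}$ are all sound, and you correctly flag quasi-compactness of $U_1\cap U_2$ (which holds since $\Spc(\sfT^c)$ is spectral) and small hom-sets via the Miller--Neeman theorem. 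The only caveat is that the two idempotent identities you invoke --- $\Gamma_{Z_1}\unit\otimes\Gamma_{Z_2}\unit\cong\Gamma_{Z_1\cap Z_2}\unit$ and the Mayer--Vietoris triangle of idempotents --- are themselves results of Balmer--Favi that this paper never states, so strictly speaking you are proving the corollary from other results of the same cited source; in the present special case the first input can be verified directly without the general calculus, since $Z_1\cap Z_2=\varnothing$ gives $\supp(t_1\otimes t_2)=\supp(t_1)\cap\supp(t_2)=\varnothing$ for $t_i\in\sfT^c_{Z_i}$, hence $\sfT^c_{Z_1}\otimes\sfT^c_{Z_2}=0$ and therefore $\Gamma_{Z_1}\unit\otimes\Gamma_{Z_2}\unit\in\loc(\sfT^c_{Z_1}\otimes\sfT^c_{Z_2})=0$, which also yields the splitting $\Gamma_{Z_1\cup Z_2}\unit\cong\Gamma_{Z_1}\unit\oplus\Gamma_{Z_2}\unit$ without quoting the general Mayer--Vietoris triangle. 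With that bookkeeping made explicit, your proof stands as a complete and self-contained justification of the quoted corollary.
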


In general, gluings may not be unique, or are only unique up to non-unique isomorphism. When considering more general covers gluings may not even exist. There are conditions which can be imposed to guarantee existence and uniqueness over finite covers, such as the following theorem.

\begin{thm}\cite[5.13]{bfgluing} 
	\label{connectivegluingthm}
	Let $\Spc(\sfT^c)= U_1 \cup U_2 \cup \cdots \cup U_n$ be a cover by quasi-compact open subsets for $n\geq 2.$ Consider objects $X_i \in \sfT(U_i)$ and isomorphisms $\sigma_{ji} : X_i \xrightarrow{\sim} X_j$ in $\sfT(U_i \cap U_j)$ satisfying the cocycle condition $\sigma_{kj}\sigma_{ji}=\sigma_{ki}$ in $\sfT(U_i\cap U_j \cap U_k)$ for $1\leq i,j,k\leq n.$ Assume moreover the following connectivity condition: for any $i=2,\dots,n$ and for any quasi-compact open $V\subseteq U_i$, we suppose that
	\[
	\Hom_{\sfT(V)}(\Sigma X_i,X_i)=0.
	\]
	Then there exists a gluing which is unique up to unique isomorphism. 
\end{thm}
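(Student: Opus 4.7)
The plan is to prove the statement by induction on $n$, with the $n=2$ case serving as the foundation and the inductive step reducing an $n$-fold cover to a two-element one.

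\textbf{Base case $n=2$.} Existence of a gluing $X$ is already furnished by the Mayer-Vietoris gluing theorem for $\Spc(\sfT^c)=U_1\cup U_2$ recalled above. To upgrade the given uniqueness (up to possibly non-unique isomorphism) to uniqueness up to unique isomorphism, I would analyse the Mayer-Vietoris triangle associated to the cover and apply $\Hom_\sfT(-,X)$. The difference of two isomorphisms $X\to X'$ between gluings of the same datum lies in the kernel of the diagonal restriction $\Hom_\sfT(X,X)\to \Hom_{\sfT(U_1)}\oplus\Hom_{\sfT(U_2)}$ and is controlled by a term of the form $\Hom_{\sfT(U_1\cap U_2)}(\Sigma X_2,X_2)$. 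The connectivity hypothesis, applied to the quasi-compact open $V=U_1\cap U_2\subseteq U_2$, makes this group vanish, forcing the isomorphism to be unique.

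\textbf{Inductive step $n\geq 3$.} Set $V=U_2\cup\cdots\cup U_n$ so that $\Spc(\sfT^c)=U_1\cup V$ is a two-element cover by quasi-compact opens. First invoke the inductive hypothesis on the cover of $V$ by $U_2,\ldots,U_n$: the cocycles $\sigma_{ji}$ for indices in $\{2,\ldots,n\}$ restrict to a compatible family, the triple cocycle condition is inherited, and the connectivity hypothesis for $i=2,\ldots,n$ is precisely the assumption. This produces $Y\in\sfT(V)$ equipped with isomorphisms $Y\cong X_i$ in $\sfT(U_i)$ for $i\geq 2$, unique up to unique isomorphism. Next, assemble an isomorphism $\sigma:X_1\xrightarrow{\sim} Y$ in $\sfT(U_1\cap V)$ by gluing the family $\{\sigma_{i1}:X_1\to X_i\}_{i\geq 2}$ along the cover $U_1\cap V=\bigcup_{i\geq 2}(U_1\cap U_i)$; the cocycle condition guarantees compatibility on triple overlaps, and a parallel induction using the gluing-of-morphisms version of Mayer-Vietoris provides the desired $\sigma$. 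Finally, apply the base case to $(X_1,Y,\sigma)$ on the cover $U_1\cup V$ to obtain the final gluing $X$.

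\textbf{Main obstacle.} The principal difficulty is the bookkeeping required to propagate the connectivity hypothesis from the original pieces $X_i$ to the intermediate glued object $Y$. Specifically, the base case applied at the outer step demands vanishing of $\Hom_{\sfT(W)}(\Sigma Y,Y)$ for quasi-compact opens $W\subseteq V$; using the identifications $Y|_{U_i}\cong X_i$ and a Mayer-Vietoris argument for $W=\bigcup_i(W\cap U_i)$ reduces this to the hypothesised vanishing of $\Hom_{\sfT(W\cap U_i)}(\Sigma X_i,X_i)$, but this reduction itself relies on the connectivity controlling the relevant $\Hom$ groups across each intersection. The parallel propagation of cocycle data in the morphism-gluing step is the companion technical burden. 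No new categorical input is needed beyond the two-open Mayer-Vietoris machinery, but the cocycle and connectivity verifications are the genuine content of the argument.
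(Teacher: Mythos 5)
Your overall strategy (induction on $n$ through the two-open Mayer--Vietoris gluing) is the right one --- the paper does not prove this statement but quotes it from Balmer--Favi, whose argument is likewise an induction on the number of opens --- but the way you set up the induction creates a gap at precisely the step you flag as the ``main obstacle'', and that step cannot be repaired as you describe. Having glued $X_2,\dots,X_n$ into $Y$ over $V=U_2\cup\cdots\cup U_n$, your final appeal to the base case for the cover $\{U_1,V\}$ needs the vanishing of $\Hom_{\sfT(W)}(\Sigma Y,Y)$ for $W=U_1\cap V$, a quasi-compact open contained in no single $U_i$. You propose to deduce this from the hypothesis by Mayer--Vietoris over the cover of $W$ by the $U_1\cap U_i$, but for a two-piece cover $W=W_1\cup W_2$ the relevant exact sequence reads
\[
\Hom_{\sfT(W_1\cap W_2)}(\Sigma^2 Y,Y)\to\Hom_{\sfT(W)}(\Sigma Y,Y)\to\Hom_{\sfT(W_1)}(\Sigma Y,Y)\oplus\Hom_{\sfT(W_2)}(\Sigma Y,Y),
\]
so vanishing on the pieces only controls the right-hand terms; the connecting term involves $\Sigma^2$, about which the hypothesis says nothing. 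Indeed the implication you rely on (degree-one vanishing on the pieces of a cover implies degree-one vanishing on the union) is false in general: on $\PP^1$ the object $Y=\mcO\oplus\Sigma^2\mcO(-2)$ satisfies $\Hom(\Sigma Y,Y)\cong H^1(\PP^1,\mcO(-2))\neq 0$ globally, while the corresponding groups vanish over every quasi-compact open of either standard affine piece. So the connectivity of the intermediate glued object is not something you can propagate from the hypotheses.

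The fix is to run the induction in the opposite order, which is exactly how the asymmetric hypothesis is designed to be used: glue one open at a time, forming $Y_k$ over $W_k=U_1\cup\cdots\cup U_k$ and then gluing $Y_k$ with $X_{k+1}$ over the two-piece cover $\{W_k,U_{k+1}\}$. Every Hom-vanishing this requires --- for assembling the morphisms $\sigma_{k+1,i}$ into an isomorphism $Y_k\to X_{k+1}$ over $W_k\cap U_{k+1}$, and for existence and uniqueness of the comparison isomorphisms between gluings --- lives over quasi-compact opens contained in the single open $U_{k+1}$ (such as $W_k\cap U_{k+1}$ and its sub-opens), where all objects in sight are identified with $X_{k+1}$ and the assumption $\Hom_{\sfT(V)}(\Sigma X_{k+1},X_{k+1})=0$ for $V\subseteq U_{k+1}$ applies verbatim; no connectivity of the intermediate objects $Y_k$ is ever needed. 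This ordering also explains the shape of the hypothesis: the exemption of $i=1$ (the first piece is never the ``newly added'' one) and the demand for all quasi-compact $V\subseteq U_i$ rather than only the pairwise intersections. Your base case $n=2$ is fine as written.
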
  

\begin{rem}
	As noted in \cite{bfgluing} it suffices to check the connectivity condition only on those $V\subseteq U_i$ which are unions of intersections of $U_1,\dots,U_n.$
\end{rem}

\begin{defn}
	We say a collection of objects admits a \emph{connective gluing} if it satisfies the hypotheses of Theorem \ref{connectivegluingthm}.
\end{defn}

The next two results show that when considering a gluing, the local components contain the expected local information. 

\begin{lem}
	Let $A, X$ be objects in $\sfT$ with $A$ compact. Let $U$ be a quasi-compact open subset of $\Spc(\sfT^c)$. Then for a quasi-compact open subset $V \subseteq U$ we have
	\[\prescript{}{\mathrm{p}}{[}A_U,X_U](V) \cong \prescript{}{\mathrm{p}}{[}A,X](V).\]
\end{lem}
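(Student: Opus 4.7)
The plan is to unwind the two sides of the claimed isomorphism to explicit twisted Hom groups and then invoke the transitivity of the local construction $\sfT(-)$ recorded at the start of Section 4, namely that for quasi-compact opens $V \subseteq U$ there is a natural equivalence $(\sfT(U))(V) \simeq \sfT(V)$ compatible with the localisation functors. In other words, localising $\sfT$ first at the Thomason complement of $U$ and then localising $\sfT(U)$ at the Thomason complement of $V$ produces the same category as localising $\sfT$ directly at the Thomason complement of $V$; this is a standard transitivity property of Verdier quotients.

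More concretely, by definition
\[
\prescript{}{\mathrm{p}}{[}A_U, X_U](V) = \Hom^{\bullet}_{(\sfT(U))(V)}\bigl((A_U)_V,\, (X_U)_V\bigr) = \bigoplus_{i \in \ZZ} \Hom_{(\sfT(U))(V)}\bigl((A_U)_V,\, (u^{\otimes i}\otimes X_U)_V\bigr)
\]
and
\[
\prescript{}{\mathrm{p}}{[}A, X](V) = \Hom^{\bullet}_{\sfT(V)}(A_V, X_V) = \bigoplus_{i \in \ZZ} \Hom_{\sfT(V)}\bigl(A_V,\, (u^{\otimes i}\otimes X)_V\bigr).
\]
Under the equivalence $(\sfT(U))(V) \simeq \sfT(V)$, the iterated localisation $A \mapsto A_U \mapsto (A_U)_V$ is naturally isomorphic to the direct localisation $A \mapsto A_V$, and likewise for $X$ and for each twist $u^{\otimes i} \otimes X$. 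Applying this termwise and summing over $i$ yields the required isomorphism of graded abelian groups.

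The only point requiring care is that the tensor actions match under the transitivity equivalence: the induced action of $\sfT(U)$ on itself, localised once more to $(\sfT(U))(V)$, has to agree with the action of $\sfT(V)$ on itself. This is automatic from the universal property, since both actions are induced from the original symmetric monoidal structure on $\sfT$ by exact monoidal localisations, so the tensor product on $(\sfT(U))(V)$ is canonically identified with that on $\sfT(V)$, and in particular the images of the invertible $u$ and its powers correspond. The compactness assumption on $A$ is not needed for the statement itself; it is recorded only because the lemma will be used later to compare twisted Hom groups where compactness plays a role downstream. Altogether, the proof reduces to a direct comparison of definitions modulo the transitivity of Verdier localisation, so no substantial obstacle is expected; the only bookkeeping is confirming monoidal compatibility of the equivalence.
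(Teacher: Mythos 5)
Your proposal is correct and takes essentially the same route as the paper: the paper's proof likewise just unwinds the definition of the presheaf sections and applies the transitivity identification $(\sfT(U))(V)\simeq\sfT(V)$, under which the iterated localisations $(A_U)_V$, $(X_U)_V$ (and the twists) correspond to $A_V$, $X_V$. Your observation that the compactness of $A$ is not actually used is consistent with the paper, whose proof never invokes it either.
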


\begin{proof}
	We have the following string of isomorphisms:
	\begin{align*}
		\prescript{}{\mathrm{p}}{[}A_U,X_U](V)&\cong \Hom^{\bullet}_{\sfT(V)}((A_U)_V,(X_U)_V),\\
		&\cong \Hom^{\bullet}_{\sfT(V)}(A_V,X_V),\\
		&\cong \prescript{}{\mathrm{p}}{[}A, X](V),\\
		&\cong\prescript{}{\mathrm{p}}{[}A,X]{\vert_U}(V).
	\end{align*}
\end{proof}

\begin{prop}
	Let $\{U_i\}_{i \in I}$ be a collection of quasi-compact open subsets of $\Spc(\sfT^c)$ such that $\Spc(\sfT^c)=\bigcup_{i \in I}U_i$, for some index set $I$. Suppose we have a collection of objects $X_i \in \sfT(U_i)$ which admit some gluing $X$. Fix a compact object $A$ and define $A_i=A_{U_i}$. If $V$ is a quasi-compact open subset of $\Spc(\sfT^c)$ then 
	\[\prescript{}{\mathrm{p}}{[}A,X](V) \cong \prescript{}{\mathrm{p}}{[}A_i, X_i](V)\]
	for any $i$ such that $V \subseteq U_i$.
\end{prop}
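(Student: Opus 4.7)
The plan is to reduce everything to the preceding lemma using the defining property of a gluing. By definition of a gluing, the object $X \in \sfT$ satisfies $X_{U_i} \cong X_i$ in $\sfT(U_i)$ for each $i \in I$. Applying the localisation functor $\sfT(U_i) \to \sfT(V)$ (which is well-defined because $V \subseteq U_i$ is quasi-compact open and $\sfT(V) \simeq (\sfT(U_i))(V)$) gives a corresponding isomorphism $X_V \cong (X_i)_V$ in $\sfT(V)$. The same functoriality produces an isomorphism $A_V \cong (A_i)_V$, which is immediate since $A_i = A_{U_i}$ by construction.

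I would then combine these two isomorphisms with the previous lemma. Specifically, the previous lemma applied to $A$ and $X$ over $U_i$ gives
\[
\prescript{}{\mathrm{p}}{[}A_{U_i}, X_{U_i}](V) \;\cong\; \prescript{}{\mathrm{p}}{[}A, X](V),
\]
so the only remaining task is to identify $\prescript{}{\mathrm{p}}{[}A_i, X_i](V)$ with $\prescript{}{\mathrm{p}}{[}A_{U_i}, X_{U_i}](V)$. Unwinding the definition of $\prescript{}{\mathrm{p}}{[}-,-]$,
\[
\prescript{}{\mathrm{p}}{[}A_i, X_i](V) = \Hom^{\bullet}_{\sfT(V)}((A_i)_V, (X_i)_V),
\]
and the isomorphisms $(A_i)_V \cong A_V \cong (A_{U_i})_V$ and $(X_i)_V \cong X_V \cong (X_{U_i})_V$ from the gluing give a natural isomorphism with $\Hom^{\bullet}_{\sfT(V)}((A_{U_i})_V, (X_{U_i})_V) = \prescript{}{\mathrm{p}}{[}A_{U_i}, X_{U_i}](V)$.

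Stringing these together,
\[
\prescript{}{\mathrm{p}}{[}A_i, X_i](V) \;\cong\; \prescript{}{\mathrm{p}}{[}A_{U_i}, X_{U_i}](V) \;\cong\; \prescript{}{\mathrm{p}}{[}A, X](V),
\]
which is the desired isomorphism. The only subtle point — and probably the main thing worth checking carefully — is that the natural transition functors commute properly, i.e.\ that the localisation of $X$ from $\sfT$ all the way to $\sfT(V)$ factors through the localisation to $\sfT(U_i)$, so that we may legitimately apply the gluing isomorphism at the intermediate stage. This is exactly the content of the identification $(\sfT(U_i))(V) \simeq \sfT(V)$ and so requires no additional argument.
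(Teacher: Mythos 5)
Your proof is correct and follows essentially the same route as the paper: both use the gluing isomorphism $X_{U_i}\cong X_i$, pass through the identification $(\sfT(U_i))(V)\simeq\sfT(V)$, and reduce to the preceding lemma's chain of isomorphisms (which the paper simply reproves inline rather than citing). The only cosmetic difference is that the paper additionally remarks that the compatibility data of the gluing makes the answer independent of the choice of $i$ when $V\subseteq U_i\cap U_j$, which is not needed for the statement as phrased.
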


\begin{proof}
	By the definition of the gluing we have $X_{U_i} \cong X_i.$ Suppose $V \subseteq U_i$. Then
	\begin{align*}
		\prescript{}{\mathrm{p}}{[}A,X](V) &\cong \Hom^{\bullet}_{\sfT(V)}(A_V, X_V),\\
		&\cong \Hom^{\bullet}_{\sfT(V)}((A_{U_i})_V, (X_{U_i})_V),\text{ as }V \subseteq U_i,\\
		&\cong \Hom^{\bullet}_{\sfT(V)}((A_{i})_V, (X_{i})_V),\text{ as }X_{U_i} \cong X_i,\\
		&\cong\prescript{}{\mathrm{p}}{[}A_i,X_i](V). 
	\end{align*}
	Finally one can choose any $i$ such that $V \subseteq U_i$ because the gluing data guarantees that if $V \subseteq U_i \cap U_j$ we have 
	\[(X_i)_V \cong (X_{ij})_V \cong (X_j)_V,\] and so any such $i$ will produce the same result. 
\end{proof}

Gluing interacts well with quasi-coherence. 

\begin{prop}
	Let $\Spc(\sfT^c)=U_1 \cup U_2$ with $U_i$ quasi-compact open, $i=1,2,$ and let $X_1, X_2$ be objects of $\sfT$ with an isomorphism $\sigma:X_1 \xrightarrow{\sim} X_2$ in $\sfT(U_1 \cap U_2).$ Let $X=X_1 \cup_\sigma X_2.$
	If $[\unit_1,X_1]^\#$ and $[\unit_2, X_2]^\#$ are both quasi-coherent, then $[\unit, X]^\#$ is quasi-coherent.
\end{prop}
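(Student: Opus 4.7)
The plan is to exploit the fact that quasi-coherence is a local property on a ringed space, so it suffices to verify the condition on a cover, and the cover $\{U_1, U_2\}$ is already handed to us.

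First I would reduce the problem to checking quasi-coherence of the restrictions $[\unit, X]^\#|_{U_i}$ for $i = 1, 2$. By the definition of a quasi-coherent sheaf (locally a cokernel of a map of free modules), this reduction is immediate: if each restriction admits, on some open cover of $U_i$, a presentation by free $\mcO$-modules, then concatenating these covers gives a cover of $\Spc(\sfT^c)$ witnessing quasi-coherence of $[\unit, X]^\#$.

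Next I would identify $[\unit, X]^\#|_{U_i}$ with $[\unit_i, X_i]^\#$ using Lemma \ref{localres}, which gives
\[
[\unit, X]^\#|_{U_i} \;\cong\; [\unit_{U_i}, X_{U_i}]^\#
\]
as sheaves on $U_i \cong \Spc(\sfT^c(U_i))$. Under the localisation $\sfT \to \sfT(U_i)$ the tensor unit is sent to the tensor unit of $\sfT(U_i)$, so $\unit_{U_i} \cong \unit_i$. Moreover, by the defining property of the gluing $X = X_1 \cup_\sigma X_2$, we have $X_{U_i} \cong X_i$ in $\sfT(U_i)$. Substituting these isomorphisms gives
\[
[\unit, X]^\#|_{U_i} \;\cong\; [\unit_i, X_i]^\#,
\]
which is quasi-coherent by hypothesis. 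Combining this with the first step finishes the proof.

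I do not anticipate a real obstacle here: the entire argument rests on (i) the locality of quasi-coherence and (ii) the compatibility of the associated sheaf functor with restriction to quasi-compact opens, which is already established in Lemma \ref{localres}. The only subtle point to check is that the localisation identifies $\unit$ with $\unit_i$, which is automatic because the localisation functor $q_{U_i} : \sfT \to \sfT(U_i)$ is symmetric monoidal.
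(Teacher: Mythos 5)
Your proof is correct and takes essentially the same route as the paper: one checks quasi-coherence locally on the cover $\{U_1,U_2\}$ and identifies $[\unit,X]^\#\vert_{U_i}$ with $[\unit_i,X_i]^\#$ via the compatibility of the associated sheaf functor with localisation (Lemma \ref{localres}) together with the gluing isomorphism $X_{U_i}\cong X_i$. The paper packages the same idea pointwise, transporting an explicit free presentation across the section-wise isomorphism $[\unit,X]^\#(V)\cong[\unit_1,X_1]^\#(V)$ on quasi-compact opens $V\subseteq U_1$, but the content is the same.
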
 

\begin{proof}
	Fix $\mcP \in \Spc(\sfT^c).$ Without loss of generality suppose $\mcP \in U_1$. As $[\unit_1,X_1]^\#$ is quasi-coherent there exists an open neighbourhood $U$ of $\mcP$ such that there is an exact sequence
	\[
	\mcO^{I}_{\sfT}\vert_U \to \mcO^{J}_{\sfT}\vert_U \to [\unit_1, X_1]^\#\vert_U\to 0. 
	\]
	Moreover we can shrink $U$ to be contained in $U_1$ and quasi-compact. Now on every quasi-compact open subset $V$ of $U_1$ we have $[\unit,X]^\#(V) \cong [\unit_1, X_1]^\#(V)$ and so the given exact sequence for $X_1$ also holds for $X$. Similarly one can obtain such a sequence using $X_2$ in the case where $\mcP \in U_2.$ Therefore $[\unit, X]^\#$ is quasi-coherent.
\end{proof}

\begin{prop}
	Let $\Spc(\sfT^c)=U_1 \cup U_2$ and consider some gluing $ X=X_1\cup_\sigma X_2$ with $X_1 \in \sfT^c(U_1)$ and $X_2 \in \sfT^c(U_2).$ Then
	\[ \supp^\bullet(X,A) \subseteq \supp(X) \subseteq  \supp(X_1 \oplus X_2).\]
\end{prop}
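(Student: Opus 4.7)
The strategy is to reinterpret the components $X_i$ as objects of $\sfT$ via the right adjoint to the localisation, and then apply the separation axiom for the Balmer--Favi support (established in the proposition citing \cite[5.7]{StevensonActions}). Specifically, let $Z_i := \Spc(\sfT^c) \setminus U_i$, which is a Thomason subset since $U_i$ is quasi-compact open. The gluing construction identifies $X_i \in \sfT(U_i)$, viewed inside $\sfT$ via the pushforward along the open immersion, with $L_{Z_i} X \in \sfT$; indeed the defining property $X_{U_i} \cong X_i$ in $\sfT(U_i)$ corresponds under the equivalence of $\sfT(U_i)$ with the essential image of $L_{Z_i}$ to an isomorphism $X_i \cong L_{Z_i} X$ in $\sfT$. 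Interpreting the direct sum $X_1 \oplus X_2$ in $\sfT$ via these identifications, the separation axiom applied with $\mcV = Z_i$ yields
\[
\supp(X_i) \;=\; \supp(L_{Z_i} X) \;=\; \supp(X) \cap U_i, \qquad i = 1, 2.
\]

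For the first inclusion $\supp^\bullet(X, A) \subseteq \supp(X)$, take $\mcP \in \supp^\bullet(X, A)$, so $[A, X]^\#_\mcP \neq 0$. Since $\Spc(\sfT^c) = U_1 \cup U_2$, choose $i$ with $\mcP \in U_i$. By Lemma~\ref{localres} the stalk is preserved upon restriction, $[A_{U_i}, X_{U_i}]^\#_\mcP \cong [A, X]^\#_\mcP \neq 0$, so $\mcP \in \supp^\bullet(X_i, A_{U_i})$ inside $\sfT(U_i)$. Since $X_i \in \sfT^c(U_i)$ is compact, Lemma~\ref{sheafintt} applied in $\sfT(U_i)$ yields $\mcP \in \supp(X_i) = \supp(X) \cap U_i \subseteq \supp(X)$.

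For the second inclusion $\supp(X) \subseteq \supp(X_1 \oplus X_2)$, let $\mcP \in \supp(X)$. Since $\mcP \in U_1 \cup U_2$, pick $i$ with $\mcP \in U_i$. Then $\mcP \in \supp(X) \cap U_i = \supp(X_i) \subseteq \supp(X_1) \cup \supp(X_2) = \supp(X_1 \oplus X_2)$, using additivity of support under direct sums.

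The main technical subtlety is the identification $X_i \cong L_{Z_i} X$ in $\sfT$, which is what allows the direct sum $X_1 \oplus X_2$ to be formed inside $\sfT$ and the separation axiom to be invoked; this is the content of the Balmer--Favi gluing theorem and must be recorded explicitly. Once this is in place the two inclusions reduce to bookkeeping, and the argument in fact produces the stronger equality $\supp(X) = \supp(X_1 \oplus X_2)$.
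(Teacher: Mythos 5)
Your overall skeleton (first inclusion by applying Lemma~\ref{sheafintt}, second inclusion via the identity $\supp(X_i)=\supp(X)\cap U_i$) is sound, and in fact the paper disposes of the first inclusion simply by quoting Lemma~\ref{sheafintt} for $X$. The genuine problem is how you justify the central identity. You derive it from the separation axiom, but that axiom is a statement about the Balmer--Favi/Stevenson support $\supp_{(\sfT,\otimes)}$, which is defined only on $\Vis(\sfT^c)$ and which the paper compares with the prime-theoretic support $\supp_{\sfT^c}$ on compact objects only under a noetherian hypothesis; the supports occurring in the statement, in Lemma~\ref{sheafintt}, and in the rest of your chain are the prime-theoretic ones, and the proposition assumes neither noetherianity nor visibility of all points. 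In addition, the intermediate equality $\supp(X_i)=\supp(L_{Z_i}X)$ silently identifies the Balmer support of the compact object $X_i\in\sfT^c(U_i)$ (a subset of $U_i\cong\Spc(\sfT^c(U_i))$) with the Balmer--Favi support in $\sfT$ of $L_{Z_i}X\cong j_{i\ast}X_i$, and $j_{i\ast}X_i$ is in general \emph{not} compact in $\sfT$, so this cannot be reduced to the quoted compact-object comparison; it would need a separate compatibility argument for the tensor idempotents under the finite localisation, which you do not give. So, as written, the key step imports machinery and hypotheses that the statement does not supply.

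The gap is repaired by arguing at the level of primes, which is exactly what the paper does: for $\mcP\in U_i$, Lemma~\ref{techstuff} gives $\sfT^c_{\Spc(\sfT^c)\setminus U_i}\subseteq\mcP$, so the quotient onto $\sfT^c/\mcP$ factors through $\sfT^c(U_i)$, and the gluing datum $X_{U_i}\cong X_i$ then yields $(X_i)_\mcP\cong(X_{U_i})_\mcP\cong X_\mcP$. This gives $\supp(X)\cap U_i=\supp(X_i)$ with no appeal to idempotents, visibility or noetherianity, and from it both your second inclusion and your closing observation that one actually obtains the equality $\supp(X)=\supp(X_1\oplus X_2)$ follow at once. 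Your local route to the first inclusion via Lemma~\ref{localres} is fine, but it too should be closed off the same way: from $\mcP\in\supp(X_i)$ and $(X_i)_\mcP\cong X_\mcP$ one gets $X\notin\mcP$ directly, without passing through the separation axiom.
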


\begin{proof}
	The first containment is Lemma \ref{sheafintt}. 
	We prove the second containment. Consider a prime $\mcP \in \supp(X)$, so that we have $X \not\in \mcP$ and so $X_\mcP \neq 0$. As $\Spc(\sfT^c)=U_1 \cup U_2$ is a cover, we may assume that $\mcP \in U_1$. By Lemma \ref{techstuff} we have an equality $\mcP = \sfT^c_{\supp(\mcP)} = \sfT^c_{\bigcup_{U \ni \mcP}\Spc(\sfT^c) \setminus U}$. Therefore $\sfT^c_{\Spc(\sfT^c)\setminus U_1} \subseteq \mcP$. Therefore 
	\[(X_1)_\mcP \cong (X_{U_1})_\mcP \cong X_\mcP \neq 0.\]
	Therefore $X_1 \not\in \mcP$ and $\mcP \in \supp(X_1)$. We can repeat the argument for each prime, choosing $U_1$ or $U_2$ as appropriate. Therefore 
	\[\supp(X) \subseteq \supp(X_1) \cup \supp(X_2) = \supp(X_1 \oplus X_2).\]
\end{proof}

\begin{rem}
	Providing the gluing exists, the above propositions can be extended to finite collections of objects.
\end{rem}

The next collection of results show that gluings can be used to determine when the presheaf functors are the same as the associated sheaf functors.

\begin{defn}
	Let $U$ be a quasi-compact open subset of $\Spc(\sfT^c)$ and let $\mcU=\{U_1,\dots,U_n\}$ be an open cover of $U$ by quasi-compact opens. We cay an object $X \in \sfT$ is \emph{1-coconnected over} $\mcU$ if for every $i$ and every quasi-compact open $V \subseteq U_i$ we have
	\[
	\Hom^\bullet_{\sfT(V)}(\unit, \Sigma^{-1}X)=0,
	\]
	We will say that $X$ is \emph{absolutely 1-coconnected over} $U$ if it is 1-coconnected over every finite cover of $U$. 
\end{defn}

\begin{thm}
	\label{sheafcondition}
	Let $X$ be an object of $\sfT$, $U$ a quasi-compact open subset of $\Spc(\sfT^c)$ and $\mcU=\{U_1,\dots,U_n\}$ a cover of $U$ by quasi-compact opens. If $X$ is 1-coconnected over $\mcU$ then $\prescript{}{\mathrm{p}}{[}\unit, X]$ satisfies the sheaf condition with respect to this cover. That is, the sequence
	\[\prescript{}{\mathrm{p}}{[}\unit,X](U)\to \bigoplus_i \prescript{}{\mathrm{p}}{[}\unit,X](U_i)\to \bigoplus_{i,j}\prescript{}{\mathrm{p}}{[}\unit,X](U_{ij})\]
	is exact, where $U_{ij}=U_i \cap U_j.$ 
\end{thm}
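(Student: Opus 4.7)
The plan is to induct on $n$, the size of the cover. The base case $n=2$ is a direct consequence of the Mayer--Vietoris long exact sequence, while the inductive step will combine three smaller instances of the sheaf condition via a diagram chase.

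For the base case $n=2$, write $U=U_1\cup U_2$ and $U_{12}=U_1\cap U_2$. The Mayer--Vietoris situation of Corollary 5.15 of [BaRickard] produces, for any $X\in \sfT(U)$, a Mayer--Vietoris distinguished triangle
\[
X \to X_{U_1}\oplus X_{U_2} \to X_{U_{12}} \to \Sigma X.
\]
Applying $\Hom^\bullet_{\sfT(U)}(\unit,-)$ and using the adjunction identification $\Hom^\bullet_{\sfT(U)}(\unit, X_{U_i})\cong \Hom^\bullet_{\sfT(U_i)}(\unit,X)$ (valid because $j_i^*\unit = \unit$ and the action is compatible with the localisation) yields a long exact sequence whose relevant fragment is
\[
\Hom^\bullet_{\sfT(U_{12})}(\unit,\Sigma^{-1}X) \to \prescript{}{\mathrm{p}}{[}\unit,X](U) \to \prescript{}{\mathrm{p}}{[}\unit,X](U_1)\oplus \prescript{}{\mathrm{p}}{[}\unit,X](U_2) \to \prescript{}{\mathrm{p}}{[}\unit,X](U_{12}).
\]
Since $U_{12}$ is a quasi-compact open of $U_1$, the 1-coconnectedness of $X$ over $\mcU$ kills the leftmost term, delivering the desired exactness at the first two positions.

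For the inductive step with $n>2$, set $U'=U_2\cup\cdots\cup U_n$, a quasi-compact open of $U$. Because quasi-compact opens of $\Spc(\sfT^c)$ are stable under finite intersection, $U_1\cap U'$ and each $U_1\cap U_i$ are also quasi-compact. The 1-coconnectedness hypothesis on $X$ is inherited by the sub-cover $\{U_2,\ldots,U_n\}$ of $U'$ trivially, and also by the cover $\{U_1\cap U_i\}_{i\ge 2}$ of $U_1\cap U'$ because each $U_1\cap U_i$ is a quasi-compact open subset of $U_1$. The inductive hypothesis applied to these two $(n-1)$-element covers shows that $F:=\prescript{}{\mathrm{p}}{[}\unit,X]$ is a sheaf there; the base case applies to the two-element cover $\{U_1,U'\}$ of $U$ since $U_1\cap U'\subseteq U_1$.

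Finally, a standard diagram chase combines these three sheaf conditions: given a compatible family $(f_i)_{i=1}^n$, the sheaf condition on $U'$ glues $f_2,\ldots,f_n$ into a section $g\in F(U')$; the injectivity of $F(U_1\cap U')\to\bigoplus_{i\ge 2}F(U_1\cap U_i)$ combined with the per-piece equalities $f_1|_{U_1\cap U_i}=f_i|_{U_1\cap U_i}=g|_{U_1\cap U_i}$ forces $f_1|_{U_1\cap U'}=g|_{U_1\cap U'}$; then the two-element Mayer--Vietoris condition glues $f_1$ and $g$ into the desired $f\in F(U)$. Injectivity at $F(U)$ is obtained dually from the same three conditions. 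The main obstacle is nothing more than the bookkeeping needed to verify that the 1-coconnectedness condition propagates to every intermediate cover encountered in the induction; this works because every open appearing in the argument is contained in some single $U_i$, so the defining condition applies directly.
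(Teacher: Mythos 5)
Your proof is correct and follows essentially the same route as the paper: the two-open base case via the Mayer--Vietoris long exact sequence, with 1-coconnectedness (applied to $U_1\cap U_2\subseteq U_1$) killing the $\Hom^\bullet_{\sfT(U_{12})}(\Sigma\unit,X)$ term, followed by induction splitting the cover as $\{U_1,\,U_2\cup\cdots\cup U_n\}$. If anything, your inductive step is more explicit than the paper's, since you spell out the diagram chase and the needed injectivity of $F(U_1\cap U')\to\bigoplus_{i\ge 2}F(U_1\cap U_i)$, and you note that the only coconnectedness input for the two-element cover is at $U_1\cap U'\subseteq U_1$, a point the paper leaves implicit.
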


\begin{proof}
	Consider the base case $n=2$, where the cover $\mcU$ has two opens $U_1$ and $U_2.$ For each $i \in \ZZ$ the Mayer-Vietoris sequence of \cite[Theorem 3.5]{bfgluing} gives an exact sequence
	\[\begin{tikzpicture}[scale=2]
		\matrix(m)[matrix of math nodes,column sep=15pt,row sep=15pt]{
			\Hom_{\sfT(U_{12})}(\Sigma \unit,u^{\otimes i}\otimes X) &\Hom_{\sfT(U)}(\unit,u^{\otimes i}\otimes X) \\
			\Hom_{\sfT(U_1)}(\unit,u^{\otimes i}\otimes X)\oplus\Hom_{\sfT(U_2)}(\unit,u^{\otimes i}\otimes X) &\Hom_{\sfT(U_{12})}(\unit,u^{\otimes i}\otimes X). \\
		};
		\draw[->,font=\scriptsize,every node/.style={above},rounded corners]
		(m-1-1) edge (m-1-2) 
		(m-1-2.east) --+(5pt,0)|-+(0,-7.5pt)-|([xshift=-5pt]m-2-1.west)--(m-2-1.west)
		(m-2-1) edge (m-2-2)
		;
	\end{tikzpicture}.\]
	By assumption $X$ is 1-coconnected over $\mcU$ so for all $i$, $\Hom_{\sfT(U_{12})}(\Sigma\unit,u^{\otimes i}\otimes X)=0.$ For each $i$ the sequence becomes
	
	\[\begin{tikzpicture}[scale=2]
		\matrix(m)[matrix of math nodes,column sep=15pt,row sep=15pt]{
			0 &\Hom_{\sfT(U)}(\unit,u^{\otimes i}\otimes X) \\
			\Hom_{\sfT(U_1)}(\unit,u^{\otimes i}\otimes X)\oplus\Hom_{\sfT(U_2)}(\unit,u^{\otimes i}\otimes X) &\Hom_{\sfT(U_{12})}(\unit,u^{\otimes i}\otimes X). \\
		};
		\draw[->,font=\scriptsize,every node/.style={above},rounded corners]
		(m-1-1) edge (m-1-2) 
		(m-1-2.east) --+(5pt,0)|-+(0,-7.5pt)-|([xshift=-5pt]m-2-1.west)--(m-2-1.west)
		(m-2-1) edge (m-2-2)
		;
	\end{tikzpicture}.\]

	Therefore on the presheaf we have an exact sequence 
	\[0\to \prescript{}{\mathrm{p}}{[}\unit, X](U)\to\prescript{}{\mathrm{p}}{[}\unit, X](U_1)\oplus\prescript{}{\mathrm{p}}{[}\unit, X](U_2)\to\prescript{}{\mathrm{p}}{[}\unit, X](U_{12}).\]
	
	Therefore the presheaf condition holds on the cover $\mcU$. Now suppose the result holds for all cases $n<m$ with $m>2.$ Consider the case $n=m$. Given a cover $\mcU=\{U_i\}^m_{i=1}$ let $V=\bigcup^n_{i=2}U_i$ and consider the cover $U=U_1 \cup V$. By induction the sheaf condition holds on this cover of $U$ and the given cover of $V$. Therefore the condition holds for the original cover $\mcU$ and the result holds. 
\end{proof}

\begin{cor}
	Let $U$ be a quasi-compact open subset of $\Spc(\sfT^c)$ and suppose $X \in \sfT$ is absolutely 1-coconnected over $U$. Then $\prescript{}{\mathrm{p}}{[}\unit, X]$ verifies the sheaf condition at $U$ and hence
	\[[\unit,X]^\#(U)=\prescript{}{\mathrm{p}}{[}\unit, X](U)=\Hom^\bullet_{\sfT(U)}(\unit,X).
	\]
\end{cor}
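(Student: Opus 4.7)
The plan is to combine quasi-compactness of $U$ with Theorem \ref{sheafcondition} to verify the sheaf condition at $U$ on every cover by quasi-compact opens, after which the general theory of sheafification from a basis delivers the claimed equality.

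First, given an arbitrary cover $\{U_\lambda\}_{\lambda \in \Lambda}$ of $U$ by quasi-compact opens, quasi-compactness of $U$ yields a finite subcover $\mcU = \{U_{\lambda_1}, \ldots, U_{\lambda_n}\}$. Absolute 1-coconnectedness of $X$ over $U$ supplies 1-coconnectedness over $\mcU$, so Theorem \ref{sheafcondition} yields the exact sequence
\[\prescript{}{\mathrm{p}}{[}\unit, X](U) \to \bigoplus_{i} \prescript{}{\mathrm{p}}{[}\unit, X](U_{\lambda_i}) \to \bigoplus_{i,j} \prescript{}{\mathrm{p}}{[}\unit, X](U_{\lambda_i} \cap U_{\lambda_j}).\]

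Next I would promote the sheaf condition from this finite subcover to the original cover $\{U_\lambda\}$ by a routine refinement argument: uniqueness of a gluing on $\{U_\lambda\}$ is immediate from its uniqueness on the finite subcover, and given a matching family $(s_\lambda)_{\lambda \in \Lambda}$ one produces a candidate $s \in \prescript{}{\mathrm{p}}{[}\unit, X](U)$ from the finite subcover and then verifies $s|_{U_\lambda} = s_\lambda$ by applying the sheaf condition once more on a finite cover of each overlap $U_\lambda \cap U_{\lambda_i}$, which is itself quasi-compact because quasi-compact opens of $\Spc(\sfT^c)$ are closed under finite intersection.

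With the sheaf condition verified at $U$ against every cover by basis elements, the general sheafification-on-a-basis theorem gives $[\unit, X]^\#(U) \cong \prescript{}{\mathrm{p}}{[}\unit, X](U)$, and the rightmost equality is the definition of the presheaf. The only technicality, which is not deep, is the passage from the finite subcover back to the full cover, but this goes through smoothly once one uses that finite intersections of quasi-compact opens of the Balmer spectrum remain quasi-compact.
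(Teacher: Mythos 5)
Your proposal is correct and follows essentially the same route as the paper: the whole content is that absolute 1-coconnectedness feeds every finite cover of $U$ into Theorem \ref{sheafcondition}, and the paper's proof does exactly this before concluding the equality of sections. The only difference is that you spell out the routine reduction from arbitrary covers by quasi-compact opens to finite subcovers (using quasi-compactness of $U$ and closure of quasi-compact opens under finite intersection), a bookkeeping step the paper leaves implicit in its phrase ``the sheaf condition is that for every finite cover\dots''.
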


\begin{proof}
	Recall that the sheaf condition is that for every finite cover $\mcU=\{U_1,\dots, U_n\}$ of $U$ the sequence
	\[\prescript{}{\mathrm{p}}{[}\unit,X](U)\to \bigoplus_i \prescript{}{\mathrm{p}}{[}\unit,X](U_i)\to \bigoplus_{i,j}\prescript{}{\mathrm{p}}{[}\unit,X](U_{ij})\]
	is exact, where $U_{ij}=U_i \cap U_j.$ As $X$ is assumed to be absolutely 1-coconnected over $U$, it is 1-coconnected with respect to each finite cover $\mcU$. Therefore by Theorem \ref{sheafcondition} the given sequence is exact for every finite cover of $U$, and therefore the sheaf condition is satisfied. Therefore we have an equality of sections
	\[[\unit,X]^\#(U)=\prescript{}{\mathrm{p}}{[}\unit, X](U)=\Hom^\bullet_{\sfT(U)}(\unit,X)\]
	as required. 
	
\end{proof}

We have seen that a gluing of objects can lead to good properties in the associated sheaves. However, a gluing of the associated sheaves does not give a gluing of objects even in the simplest cases.

\begin{ex}
	Consider $\sfT^c=\mcD^{\perf}(R)$ for some commutative ring $R$. This is an affine category and moreover for each object $X$ we have $[\unit,X]^\# \cong \widetilde{H^0(X)}$. Consider the following two perfect complexes:
	\begin{align*}
		X&=\cdots \to 0\to R \to 0 \to R \to 0 \to 0 \to \cdots,\\
		Y&=\cdots \to 0 \to 0 \to 0 \to R \to 0 \to 0 \to \dots,
	\end{align*}
	where $X$ has $R$ in degrees $-2$ and $0$, and $Y$ has $R$ in degree $0$ only. Both complexes have the same zeroth cohomology and so there is an isomorphism of untwisted sheaves $[\unit,X]^\# \cong [\unit, Y]^\#.$ However they are not isomorphic in $\mcD^{\perf}(R)$ as they have non-isomorphic homology in degree $-2$. Therefore consider the trivial cover $U_1=U_2=\Spc(\sfT^c)$. We have
	\[\sfT^c(U_1 \cap U_2)\simeq \sfT^c(\Spc(\sfT^c))\simeq \sfT^c,\]
	and the existence of a gluing of $X$ to $Y$ would imply that $X\cong Y$ in $\mcD^{\perf}(R)$, a contradiction. Therefore the existence of a gluing of sheaves does not provide a gluing objects.
\end{ex}

\section{Interactions with t-structures }
We now consider the case where $\sfT$ is equipped with a t-structure. For more details on t-structures see \cite{fperv}.

\begin{defn}
	Let $\sfT$ be a triangulated category with full subcategories $\sfT^{\leq 0}$ and $\sfT^{\geq 0 }.$ Denote $\sfT^{\leq n}=\Sigma^{-n}\sfT^{\leq 0}$ and $\sfT^{\geq n}=\Sigma^{-n}\sfT^{\geq 0}.$ A \emph{t-structure} on $\sfT$ is the data $(\sfT^{\leq 0},\sfT^{\geq 0})$ such that
	\begin{enumerate}
		\item There are containments 
		\[\sfT^{\leq -1}\subseteq \sfT^{\leq 0}\text{ and }\sfT^{\geq 1}\subseteq \sfT^{\geq 0}.\]
		\item For any $X \in \sfT^{\leq 0},Y \in \sfT^{\geq 1}$, $\Hom_\sfT(X,Y)=0.$
		\item For any $X \in \sfT$ there exists a distinguished triangle 
		\[X_0 \to X \to X_1 \to \Sigma X_0\]
		such that $X_0 \in \sfT^{\leq 0}$ and $X_1 \in \sfT^{\geq 1}.$ 
	\end{enumerate}
	A t-structure is \emph{non-degenerate} if
	\[
	\bigcap_{n\in\ZZ}\sfT^{\leq n}=\bigcap_{n\in\ZZ}\sfT^{\geq n}=0.
	\]
	The \emph{heart} of the t-structure is the full subcategory 
	\[\sfT^\heartsuit=\sfT^{\leq 0}\cap \sfT^{\geq 0}.\]
\end{defn} 
The following are standard results on t-structures:
\begin{prop}
	Let $(\sfT^{\leq 0},\sfT^{\geq 0})$ be a t-structure on $\sfT$. Then the following hold:
	\begin{enumerate}
		\item The heart $\sfT^\heartsuit$ is an abelian category.
		\item Consider the natural inclusion functors $i^{\leq n}:\sfT^{\leq n} \to \sfT$ and $i^{\geq n}:\sfT^{\geq n} \to \sfT.$ There exist functors $\tau^{\leq n}:\sfT \to \sfT^{\leq n}$ and $\tau^{\geq n}:\sfT \to \sfT^{\geq n}$ such that
		\[i^{\leq n}\dashv \tau^{\leq n}\text{ and }\tau^{\geq n} \dashv i^{\geq n}. \]
	\end{enumerate}
	Explicitly 
	\begin{align*}
		\Hom_\sfT(i^{\leq n}X,Y) &\cong \Hom_{\sfT^{\leq n}}(X,\tau^{\leq n}Y),\\
		\Hom_\sfT(X,i^{\geq n}Y)&\cong \Hom_{\sfT^{\geq n}}(\tau^{\geq n}X,Y).
	\end{align*}
\end{prop}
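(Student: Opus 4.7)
The plan is to proceed by first establishing the existence and functoriality of the truncation functors, then deducing the adjunctions, and finally using these tools to prove the heart is abelian. This is the standard BBD strategy \cite{fperv}, so I describe it briefly.

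First I would construct $\tau^{\leq 0}$ and $\tau^{\geq 1}$ on objects. By axiom (3), for every $X \in \sfT$ there is a triangle $X_0 \to X \to X_1 \to \Sigma X_0$ with $X_0 \in \sfT^{\leq 0}$ and $X_1 \in \sfT^{\geq 1}$; define $\tau^{\leq 0}X := X_0$ and $\tau^{\geq 1}X := X_1$. The key input is functoriality: given $f \colon X \to Y$ together with the truncation triangles for $X$ and $Y$, apply $\Hom(-,Y_1)$ to the triangle for $X$ to get a long exact sequence involving $\Hom(X_0, Y_1)$ and $\Hom(\Sigma X_0, Y_1)$. Since $\Sigma X_0 \in \sfT^{\leq -1} \subseteq \sfT^{\leq 0}$ and $Y_1 \in \sfT^{\geq 1}$, both groups vanish by axiom (2), so $\Hom(X_1, Y_1) \xrightarrow{\sim} \Hom(X, Y_1)$, and the composite $X \to Y \to Y_1$ lifts uniquely to a map $\tau^{\geq 1}f \colon X_1 \to Y_1$. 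The same argument yields $\tau^{\leq 0}f$, and uniqueness upgrades this to functoriality. Shifting gives $\tau^{\leq n}$ and $\tau^{\geq n}$ for all $n \in \ZZ$.

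Next I would deduce the adjunctions. For $Y \in \sfT^{\geq 1}$ and any $X \in \sfT$, apply $\Hom(-, Y)$ to the truncation triangle of $X$ to obtain
\[
\Hom(\Sigma X_0, Y) \to \Hom(X_1, Y) \to \Hom(X, Y) \to \Hom(X_0, Y).
\]
Both outer terms vanish by axiom (2), yielding $\Hom(X_1, Y) \xrightarrow{\sim} \Hom(X, Y)$, which is precisely the stated adjunction $\tau^{\geq 1} \dashv i^{\geq 1}$; shifting handles general $n$, and the dual argument produces $i^{\leq n} \dashv \tau^{\leq n}$. I would also verify in passing that the truncation triangle is canonically isomorphic to $\tau^{\leq n}X \to X \to \tau^{\geq n+1}X \to \Sigma \tau^{\leq n}X$, since this compatibility is needed later.

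Finally, the abelianness of the heart. This is the main obstacle, since it requires more than formal manipulation of adjoints. I would proceed as in BBD: for $f \colon X \to Y$ in $\sfT^\heartsuit$, complete $f$ to a triangle $X \xrightarrow{f} Y \to C \to \Sigma X$ in $\sfT$, and define the kernel of $f$ to be $\tau^{\geq 0}\Sigma^{-1} C$ and the cokernel to be $\tau^{\leq 0} C$. Using that $X, Y \in \sfT^{\leq 0} \cap \sfT^{\geq 0}$, one shows $C \in \sfT^{\leq 0} \cap \sfT^{\geq -1}$, so the truncations land in $\sfT^\heartsuit$. Verifying that these have the correct universal properties, and that the canonical comparison from coimage to image is an isomorphism, is where the octahedral axiom enters: one builds an octahedron from the factorisation $\Sigma^{-1}C \to X \to Y$ and reads off the coincidence of image and coimage. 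The hard part is this last identification; the rest is bookkeeping with the triangulated structure and the Hom-vanishing axiom. Since all of this is standard and the paper invokes it as preliminary material, I would likely only sketch it and cite \cite{fperv}.
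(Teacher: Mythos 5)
Your overall route is exactly the standard one the paper relies on: the paper gives no proof of this proposition and simply cites \cite{fperv}, and your construction of the truncations, the functoriality argument via the vanishing $\Hom(\sfT^{\leq 0},\sfT^{\geq 1})=0$, and the derivation of the two adjunctions from the long exact sequence are all correct and are precisely the BBD argument.

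There is, however, a concrete slip in the final step, and as written that step fails. With the paper's conventions ($\sfT^{\leq n}=\Sigma^{-n}\sfT^{\leq 0}$, so $\Sigma$ plays the role of $[1]$ and $H^n(X)=\tau^{\geq 0}\tau^{\leq 0}\Sigma^n X$), for $f\colon X\to Y$ in $\sfT^\heartsuit$ with cone triangle $X\to Y\to C\to \Sigma X$ one has, as you say, $C\in \sfT^{\leq 0}\cap \sfT^{\geq -1}$; but then your formulas are interchanged. Your proposed cokernel $\tau^{\leq 0}C$ is canonically just $C$ (since $C\in\sfT^{\leq 0}$), and your proposed kernel $\tau^{\geq 0}\Sigma^{-1}C$ is canonically just $\Sigma^{-1}C$ (since $\Sigma^{-1}C\in\sfT^{\geq 0}$); neither lies in $\sfT^\heartsuit$ in general, so the claim that ``the truncations land in $\sfT^\heartsuit$'' is false for the objects you wrote down, and they cannot have the stated universal properties (in the model case of complexes, $C$ carries both $\ker f$ in degree $-1$ and $\coker f$ in degree $0$). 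The correct definitions are $\coker f=\tau^{\geq 0}C\;(=H^0(C))$ and $\ker f=\tau^{\leq 0}\Sigma^{-1}C\;(=H^{-1}(C))$, which do lie in the heart because $\tau^{\geq 0}$ preserves $\sfT^{\leq 0}$ and $\tau^{\leq 0}$ preserves $\sfT^{\geq 0}$ (a point worth recording, since it is what makes the construction land in $\sfT^\heartsuit$). With that correction the remainder of your sketch, including the octahedron argument identifying image and coimage, goes through as in \cite{fperv}.
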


\begin{defn}
	Given a t-structure $(\sfT^{\leq 0},\sfT^{\geq 0})$ we define the $n$-th cohomology functor $H^n:\sfT\to \sfT^\heartsuit$ by\[H^n(X)=\tau^{\geq 0}\tau^{\leq 0}\Sigma^n X.\]
\end{defn}

We give the following theorem from \cite{compactstruc} showing that a t-structure can be built from a compact object under suitable conditions.

\begin{defn}
	We say an object $X \in \sfT$ is \emph{connective} if for all $n>0$, $\Hom_\sfT(X, \Sigma^n X)=0$. If $U \subseteq \Spc(\sfT^c)$ is a quasi-compact open subset then we say that an object $X$ is \emph{locally connective over }$U$ if $\Hom_{\sfT(U)}(X,\Sigma^n X)=0$ for all $n>0$. 
	If $\mcU=(U_i)_{i \in I}$ is an open cover of $\Spc(\sfT^c)$ with each $U_i$ quasi-compact, then we say that $X$ is \emph{locally connective over }$\mcU$ if $X$ is locally connective over each of the $U_i$.
\end{defn}

\begin{thm}\cite[1.3]{compactstruc}
	\label{hkmtstruc}
	Let $\sfT$ be a triangulated category with arbitrary coproducts, $C$ a connective compact object, and $B=\End_\sfT(C)^\op.$ Define 
	\begin{align*}
		\sfT^{\leq n}&=\{X \in \sfT \ \vert \ \Hom_\sfT(C,\Sigma^i X)=0 \text{ for }i>n\}\\
		\sfT^{\geq n}&=\{X \in \sfT \ \vert \ \Hom_\sfT(C,\Sigma^i X)=0 \text{ for }i<n\}
	\end{align*}
	Let $\sfT^\heartsuit=\sfT^{\leq 0}\cap \sfT^{\geq 0}.$
	If $\{\Sigma^i C \ \vert \ i \in \ZZ\}$ is a generating set, then the following hold:
	\begin{enumerate}
		\item $(\sfT^{\leq 0}, \sfT^{\geq 0})$ is a non-degenerate t-structure on $\sfT$.
		\item The functor
		\[\Hom_{\sfT}(C,-):\sfT^\heartsuit \to \Modu B,\]
		is an equivalence of categories.
	\end{enumerate}
\end{thm}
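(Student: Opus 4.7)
The plan is to verify the three axioms of a t-structure in part (1), and then construct a quasi-inverse to $\Hom_{\sfT}(C,-)$ for part (2).

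Axiom (i), the containments $\sfT^{\leq -1}\subseteq \sfT^{\leq 0}$ and $\sfT^{\geq 1}\subseteq \sfT^{\geq 0}$, is immediate from the definitions, since the vanishing conditions on the smaller aisles are strictly stronger. For axiom (ii), my strategy is to characterise $\sfT^{\leq 0}$ as the smallest full subcategory of $\sfT$ containing $\{\Sigma^j C\mid j\geq 0\}$ and closed under coproducts, extensions, summands, and positive suspension; connectivity of $C$ is precisely what places these generators inside $\sfT^{\leq 0}$. Orthogonality with $\sfT^{\geq 1}$ then reduces, via this generation and the $\Sigma/\Sigma^{-1}$ adjunction, to the calculation $\Hom_{\sfT}(\Sigma^j C, Y)\cong \Hom_{\sfT}(C,\Sigma^{-j}Y)=0$ for $j\geq 0$ and $Y\in\sfT^{\geq 1}$, which is exactly the defining vanishing of the coaisle.

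The main technical content lies in axiom (iii). Given $X\in\sfT$, I would build $\tau^{\geq 1}X$ by iterated cell attachment. Set $X_0=X$ and inductively define $X_{n+1}$ as the cofibre of the canonical evaluation map
\[
\bigoplus_{j\geq 0}\ \bigoplus_{f\colon\Sigma^{j}C\to X_n}\Sigma^{j}C\;\longrightarrow\; X_n.
\]
The fibre at each stage is a coproduct of $\Sigma^j C$ with $j\geq 0$ and hence lies in $\sfT^{\leq 0}$. Let $X_\infty=\hocolim_n X_n$. Using the compactness of $C$, together with the fact that each generator $f\colon\Sigma^{j}C\to X_n$ is rendered null in $X_{n+1}$ by construction, one checks $\Hom_{\sfT}(C,\Sigma^i X_\infty)=\colim_n\Hom_{\sfT}(C,\Sigma^i X_n)=0$ for $i\leq 0$, whence $X_\infty\in \sfT^{\geq 1}$. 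Taking $\tau^{\leq 0}X$ to be the fibre of $X\to X_\infty$, realised as a sequential homotopy colimit of the iterated fibres, yields an object of $\sfT^{\leq 0}$ and completes the required triangle. Non-degeneracy then follows directly from the generating hypothesis: any $X\in\bigcap_n\sfT^{\leq n}$ satisfies $\Hom_{\sfT}(C,\Sigma^i X)=0$ for every $i\in\ZZ$, forcing $X=0$.

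For part (2), I first note that $H\coloneqq \Hom_{\sfT}(C,-)\colon \sfT^\heartsuit\to\Modu B$ is well-defined, since $\Hom_{\sfT}(C,X)$ is naturally a right module over $B=\End_{\sfT}(C)^{\op}$ by precomposition. Fully faithfulness follows from exhibiting, for each $X\in\sfT^\heartsuit$, a partial presentation $C^{(J)}\to C^{(I)}\to X$ extracted from the truncation construction and observing $H$ sends $C^{(I)}$ to the free module $B^{(I)}$. Essential surjectivity proceeds by running this process in reverse: given a right $B$-module $M$, choose a free presentation $B^{(J)}\to B^{(I)}\to M\to 0$, realise the corresponding morphism $C^{(J)}\to C^{(I)}$ in $\sfT$, form its cofibre $Y$, and set $X=H^0(Y)$; then $H(X)\cong M$ by the exactness properties of the heart. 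I expect the main obstacle to be verifying that $X_\infty$ genuinely lies in $\sfT^{\geq 1}$: each evaluation map kills only the currently visible classes, and new classes in $\Hom_{\sfT}(\Sigma^j C,-)$ may appear when one passes to a cone, so one must argue that every obstruction is annihilated somewhere along the tower. Compactness of $C$ is essential here, ensuring $\Hom_{\sfT}(C,-)$ commutes with $\hocolim$, which is what allows the induction to terminate at $\omega$; together with connectivity, it carries most of the weight of the theorem.
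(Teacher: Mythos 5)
A preliminary remark: the paper does not prove this statement at all --- it is imported verbatim from \cite{compactstruc}, so there is no internal proof to compare you against; your outline has to stand on its own. The route you choose (generate the aisle from $\{\Sigma^j C \mid j\geq 0\}$ and build the truncation by iterated cell attachment, using compactness to pass to the colimit) is the standard one and is viable. The genuine gap is at the orthogonality axiom. You propose to characterise $\sfT^{\leq 0}$ as the smallest subcategory $\mcU$ containing $\{\Sigma^j C\mid j\geq 0\}$ and closed under coproducts, extensions, summands and suspension, and you justify this by connectivity of $C$. Connectivity only gives the easy inclusion $\mcU\subseteq\sfT^{\leq 0}$; orthogonality against $\sfT^{\geq 1}$ then holds for objects of $\mcU$, but not, a priori, for arbitrary objects of $\sfT^{\leq 0}$, which are defined purely by a Hom-vanishing condition. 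The reverse inclusion $\sfT^{\leq 0}\subseteq\mcU$ is where the content sits, and it is not a definition chase: one must run your cell-attachment construction on $X\in\sfT^{\leq 0}$ to get a triangle $F\to X\to X_\infty$ with $F\in\mcU$ and $\Hom_\sfT(C,\Sigma^i X_\infty)=0$ for $i\leq 0$, note that $X_\infty$, being a cone of a map between objects of $\sfT^{\leq 0}$, also satisfies the vanishing for $i>0$, and conclude $X_\infty=0$ from the generating hypothesis, so $X\cong F\in\mcU$. So axiom (ii) depends on the construction in axiom (iii) together with generation, whereas your write-up presents it as prior to and independent of it. A minor related point: identifying the fibre of $X\to X_\infty$ with $\hocolim F_n$ needs a short argument (homotopy colimits are not functorial in a bare triangulated category); the usual fix is to build the map $\hocolim F_n\to X$ from the compatible maps $F_n\to X$, check $\hocolim F_n\in\sfT^{\leq 0}$ by compactness as you do, and compare cones.

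For part (2), your essential surjectivity argument does work: with $Y$ the cone of $C^{(J)}\to C^{(I)}$ one has $\Hom_\sfT(C,\Sigma C^{(J)})=0$ by compactness plus connectivity, so $\Hom_\sfT(C,Y)\cong M$, and the truncation triangles give $\Hom_\sfT(C,H^0Y)\cong\Hom_\sfT(C,Y)$. Full faithfulness, however, is only gestured at, and the gesture hides a real issue: $C$ itself need not lie in $\sfT^\heartsuit$, so your ``presentation'' $C^{(J)}\to C^{(I)}\to X$ lives in $\sfT$, and descending a module map along it to a morphism $X\to Y$ requires an octahedron-style argument, not just the observation that $\Hom_\sfT(C,-)$ sends $C^{(I)}$ to a free module. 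The cleaner standard route is to show that $H^0(C)$ is a compact projective generator of the heart, with $\End_{\sfT^\heartsuit}(H^0C)\cong\End_\sfT(C)$ (using $\Hom_\sfT(C,\tau^{\leq -1}C)=0=\Hom_\sfT(C,\Sigma\tau^{\leq -1}C)$, which follow from the definition of $\sfT^{\leq -1}$), and then invoke the Gabriel--Mitchell recognition of module categories; either fill in that argument or make the descent along your presentation explicit.
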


\begin{defn}
	Let $C$ be a connective compact object of $\sfT$. Let $(\sfT^{\leq 0}, \sfT^{\geq 0})$ be the t-structure of Theorem \ref{hkmtstruc}. We say that $(\sfT^{\leq 0}, \sfT^{\geq 0})$ is the t-structure \emph{connectively generated by }$C$. We say that $C$ \emph{connectively generates} $(\sfT^{\leq 0}, \sfT^{\geq 0})$. 
\end{defn}

Observe that if $\sfT$ is generated by the tensor unit $\unit$ and $\unit$ is locally connective with respect to some cover $\mcU$, then for each $U\in\mcU$ the tt-category $\sfT(U)$ can be equipped with the t-structure connectively generated by $\unit$. We denote the heart of this structure by $\sfT(U)^\heartsuit$.

We now detail conditions under which these local t-structures interact well with the corresponding localisation functors.

\begin{prop}
	\label{localtrunc}
	Let $\sfT$ be a big tt-category, generated by the tensor unit $\unit$. Consider a morphism $s \in R_{\sfT} \coloneqq \Hom_{\sfT}(\unit, \unit)$ and the quasi-compact open subset $U=U(\cone(s))$.
	\begin{enumerate}
		\item For an object $X \in \sfT$, if $\Hom_\sfT(\unit, X) \cong 0$ then $\Hom_{\sfT(U)}(\unit_U, X_U)=0$. \\
		\item If $\unit$ is connective in $\sfT$ then $\unit_U$ is connective in $\sfT(U)$. As a consequence, if $\unit$ connectively generates a t-structure on $\sfT$ then $\unit_U$ connectively generates a t-structure on $\sfT(U)$.  \\
		\item If $\unit$ is connective then for all $i \in \ZZ$ we have the following commutative diagrams:
		
		\[
		\begin{tikzcd}
			\sfT \arrow[rr, "(-)_U"] \arrow[dd, "\tau^{\leq i}"'] &  & \sfT(U) \arrow[dd, "\tau^{\leq i}_{U}"] \\
			&  &                     \\
			\sfT^{\leq i} \arrow[rr, "(-)_U"]                  &  & \sfT(U)^{\leq i}                  
		\end{tikzcd}
		\text{ and }
		\begin{tikzcd}
			\sfT \arrow[rr, "(-)_U"] \arrow[dd, "\tau^{\geq i}"'] &  & \sfT(U) \arrow[dd, "\tau^{\geq i}_{U}"] \\
			&  &                     \\
			\sfT^{\geq i} \arrow[rr, "(-)_U"]                  &  & \sfT(U)^{\geq i}                  
		\end{tikzcd}
		\] 
		In other words, truncation and localisation commute.
	\end{enumerate}
\end{prop}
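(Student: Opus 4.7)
The plan is to reduce everything to Theorem~\ref{hpstheorem} by identifying $\sfT(U)$ with the big quotient appearing there. Specifically, I would first show that $\sfT_Z = \loc(\thick^\otimes(\cone(s)))$ where $Z = \Spc(\sfT^c)\setminus U$. By construction $\sfT_Z = \loc(\sfT^c_Z)$, and $\sfT^c_Z = \{t \in \sfT^c : \supp(t) \subseteq \supp(\cone(s))\}$; combining Proposition~\ref{radicalsupp} with the rigidity of $\sfT^c$ (which forces every thick $\otimes$-ideal to equal its radical) identifies this subcategory with $\thick^\otimes(\cone(s))$. With $\mcJ = \thick^\otimes(\cone(s))$, Theorem~\ref{hpstheorem} then supplies a natural isomorphism
\[\Hom_{\sfT(U)}(\unit_U, X_U) \cong S^{-1}\Hom_\sfT(\unit, X)\]
for every $X \in \sfT$, where $S = \{s^i : i \geq 0\}$.

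Part (1) is then immediate, since the localisation of the zero group is zero. For (2), I would apply part (1) to $X = \Sigma^n \unit$ with $n > 0$ to conclude $\unit_U$ is connective. To invoke Theorem~\ref{hkmtstruc} and produce the t-structure, I also need $\{\Sigma^i \unit_U : i \in \ZZ\}$ to be a generating set of $\sfT(U)$. For this, given $Y \in \sfT(U)$ with $\Hom_{\sfT(U)}(\unit_U, \Sigma^i Y) = 0$ for all $i$, the adjunction $(-)_U \dashv j_*$, with $j_*$ the fully faithful right adjoint to the localisation, gives $\Hom_\sfT(\unit, \Sigma^i j_*Y) = 0$ for all $i$; since $\unit$ generates $\sfT$ this forces $j_*Y = 0$, and full faithfulness of $j_*$ (i.e.\ $j^*j_* \cong \id$) then yields $Y \cong 0$.

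For part (3), the plan is to apply the exact localisation $(-)_U$ to the canonical truncation triangle
\[\tau^{\leq i}X \to X \to \tau^{\geq i+1}X \to \Sigma\tau^{\leq i}X\]
in $\sfT$ and verify that the two outer terms land in $\sfT(U)^{\leq i}$ and $\sfT(U)^{\geq i+1}$ respectively, so that uniqueness of the truncation triangle for the t-structure on $\sfT(U)$ will identify the resulting triangle with the truncation triangle of $X_U$. The membership $(\tau^{\leq i}X)_U \in \sfT(U)^{\leq i}$ follows from applying part (1) to $\Sigma^j \tau^{\leq i}X$ for each $j > i$, and symmetrically for the $\geq$-piece. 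The main obstacle is the opening identification of $\sfT_Z$ with $\loc(\thick^\otimes(\cone(s)))$: without rigidity one is stuck with $\sqrt{\thick^\otimes(\cone(s))}$ and Theorem~\ref{hpstheorem} does not apply directly. Once that identification is in place, all three parts reduce to formal consequences of the HPS localisation and standard t-structure manipulations.
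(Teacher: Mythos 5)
Your proposal is correct and follows essentially the same route as the paper: part (1) is deduced from the isomorphism $\Hom_{\sfT(U)}(\unit_U,X_U)\cong S^{-1}\Hom_\sfT(\unit,X)$ of Theorem \ref{hpstheorem} (your opening identification $\sfT^c_Z=\thick^\otimes(\cone(s))$ via Proposition \ref{radicalsupp} and rigidity is exactly what the paper establishes in Proposition \ref{propcomsec} and the corollary following Theorem \ref{hpstheorem}), part (2) by applying (1) to the objects $\Sigma^n\unit$, and part (3) by localising the canonical truncation triangle, checking the outer terms land in $\sfT(U)^{\leq i}$ and $\sfT(U)^{\geq i+1}$ using (1), and invoking uniqueness of truncation triangles. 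Your additional verification that $\{\Sigma^i\unit_U\}$ generates $\sfT(U)$, via the adjunction with the fully faithful right adjoint, is a correct filling-in of a step the paper leaves implicit when citing Theorem \ref{hkmtstruc}.
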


\begin{proof}
	\begin{enumerate}
		\item By Theorem \ref{hpstheorem} we have an isomorphism
		\[\Hom_{\sfT(U)}(\unit_U, X_U)\cong S^{-1}\Hom_\sfT(\unit, X)\]
		where the right term is the localisation of the module $\Hom_\sfT(\unit, X)$ with respect to the multiplicative set associated to $s$. By assumption $\Hom_\sfT(\unit, X)$ is zero, and therefore so is the localisation. Therefore $\Hom_{\sfT(U)}(\unit, X)\cong 0$.\\
		
		\item The unit $\unit$ is connective and so $\Hom_\sfT(\unit, \Sigma^{n}\unit)\cong 0$ for all $n>0$. Applying part $(1)$ to each of the $\Sigma^n \unit$ we have that for each $n>0$, $\Hom_{\sfT(U)}(\unit, \Sigma^n \unit) \cong 0$. In other words, $\unit$ is locally connective over $U$ and therefore connectively generates a t-structure on $\sfT(U)$.\\
		
		\item First note that by part (2), as $\unit$ is connective it makes sense to consider the t-structure $(\sfT(U)^{\leq 0},\sfT(U)^{\geq 0})$ connectively generated by $\unit_U$. Fix $X \in \sfT$ and $i \in \ZZ$. We first prove that
		\[
		(\tau^{\leq i}X)_U \in \sfT(U)^{\leq i}\text{ and }(\tau^{\geq i}X)_U \in \sfT(U)^{\geq i}.
		\]
		Consider $\Hom_{\sfT(U)}(\unit_U, \Sigma^j (\tau^{\leq i}X)_U).$ As $\tau^{\leq i}X \in \sfT^{\leq i}$ we have $\Hom_{\sfT}(\unit, \Sigma^j \tau^{\leq i}X)=0$ for all $j>i$. Therefore by (1) $\Hom_{\sfT(U)}(\unit_U, \Sigma^j (\tau^{\leq i}X)_U)=0$ for all $j>i$ so by definition $(\tau^{\leq i}X)_U \in \sfT(U)^{\leq i}$. A similar proof provides the second part of the claim.
		Now consider the diagrams
		\[
		\begin{tikzcd}
			\sfT \arrow[rr, "(-)_U"] \arrow[dd, "\tau^{\leq i}"'] &  & \sfT(U) \arrow[dd, "\tau^{\leq i}_{U}"] \\
			&  &                     \\
			\sfT^{\leq i} \arrow[rr, "(-)_U"]                  &  & \sfT(U)^{\leq i}                  
		\end{tikzcd}
		\text{ and }
		\begin{tikzcd}
			\sfT \arrow[rr, "(-)_U"] \arrow[dd, "\tau^{\geq i}"'] &  & \sfT(U) \arrow[dd, "\tau^{\geq i}_{U}"] \\
			&  &                     \\
			\sfT^{\geq i} \arrow[rr, "(-)_U"]                  &  & \sfT(U)^{\geq i}                  
		\end{tikzcd}
		\]
		The proof of the first claim shows that the bottom arrows in each diagram do in fact land in $\sfT(U)^{\leq i}$ and $\sfT(U)^{\geq i}$ respectively.
		
		Recall that the t-structure on $\sfT$ induces a unique triangle 
		\[\tau^{\leq i}X \to X \to \tau^{\geq i+1}X \to \Sigma \tau^{\leq i}X.\] Applying the localisation funtor $(-)_U$ gives a triangle
		\[(\tau^{\leq i}X)_U \to X_U \to (\tau^{\geq i+1}X)_U \to (\Sigma \tau^{\leq i}X)_U.\]
		Now by the previous lemma $(\tau^{\leq i}X)_U \in \sfT(U)^{\leq i}$ and $(\tau^{\geq i+1}X)_U \in \sfT(U)^{\geq i+1}$. Therefore the above triangle is isomorphic to the canonical triangle induced by the t-structure on $\sfT(U)$, given by the following diagram:
		\[
		\begin{tikzcd}
			(\tau^{\leq i}X)_U \arrow[r] \arrow[d] & X_U \arrow[r] \arrow[d] & (\tau^{\geq i+1}X)_U \arrow[r] \arrow[d] & (\Sigma \tau^{\leq i}X)_U \arrow[d] \\
			\tau^{\leq i}_U(X_U) \arrow[r]           & X_U \arrow[r]           & \tau^{\geq i+1}_U(X_U) \arrow[r]           & \Sigma \tau^{\leq i}_U(X_U)          
		\end{tikzcd}\]
		As all of the vertical arrows are isomorphisms we have 
		\[
		(\tau^{\leq i}X)_U \cong \tau^{\leq i}_U(X_U) \text{ and } (\tau^{\geq i+1}X)_U \cong \tau^{\geq i+1}_U(X_U)
		\] 
		which completes the proof. 
	\end{enumerate}
\end{proof}

\begin{rem}
	\label{loccoh}
	As an immediate consequence we have
	\[
	H^0_U(X_U) \cong (H^0(X))_U
	\]
	as the homology functor is defined by the truncation functors. 
\end{rem}

Our work on affine categories allows us to define this t-structure in terms of sheaves associated to objects. We work in the untwisted setting. 

\begin{lem}
	Let $\sfT$ be an affine category and suppose that the unit $\unit$ connectively generates a t-structure on $\sfT$. Then:
	\begin{enumerate}
		\item For all $n \in \ZZ$:
		\begin{enumerate}
			\item $\sfT^{\leq n} = \{X \in \sfT\ \vert \ [\unit, \Sigma^i X]^\#=0\ i>n\}.$
			\item $\sfT^{\geq n} = \{X \in \sfT\ \vert \ [\unit, \Sigma^i X]^\#=0\ i<n\}.$
			\item $\sfT^{\heartsuit} = \{X \in \sfT\ \vert \ [\unit, \Sigma^i X]^\#=0\ i\neq 0\}.$
		\end{enumerate} 
		\item Let $X \in \sfT^c.$ Then
		\begin{enumerate}
			\item If $X \in \sfT^{\leq n}$ then $\supp X=\bigcup_{i\leq n}\supp(\Sigma^i X, \unit).$  
			\item If $X \in \sfT^{\geq n}$ then $\supp X=\bigcup_{i \geq n}\supp(\Sigma^i X, \unit).$
			\item If $X \in \sfT^\heartsuit$ then $\supp X = \supp(X, \unit).$
		\end{enumerate} 
	\end{enumerate}
\end{lem}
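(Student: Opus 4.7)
The plan is to tackle the two parts separately: part (1) reduces directly to the explicit description of the t-structure once the affine hypothesis is applied, and part (2) follows by combining part (1) with the support-comparison machinery of Section~5, once we establish $\sfT^c = \thick(\unit)$.

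For part (1), since $\sfT$ is affine, the corollary at the end of Section~7 gives a natural isomorphism $[\unit,\Sigma^i X]^\# \cong \widetilde{\Hom_\sfT(\unit,\Sigma^i X)}$ as quasi-coherent sheaves on $\Spec(\sfT)\cong\Spec(R_\sfT)$ for every $i\in\ZZ$. Because an associated module sheaf on an affine scheme vanishes if and only if the underlying module does, we have $[\unit,\Sigma^i X]^\# = 0$ iff $\Hom_\sfT(\unit,\Sigma^i X) = 0$. Comparing with the explicit description of $\sfT^{\leq n}$, $\sfT^{\geq n}$, and $\sfT^\heartsuit$ from Theorem~\ref{hkmtstruc} immediately gives all three identifications.

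For part (2), my first task is to verify $\sfT^c = \thick(\unit)$. Since $\unit$ connectively generates the t-structure, the set $\{\Sigma^i \unit\}_{i\in\ZZ}$ generates $\sfT$, hence $\sfT=\loc(\unit)$; combined with compactness of $\unit$, Neeman's classical theorem on compact generators then yields $\sfT^c = \thick(\unit)$. With this in hand, Theorem~\ref{thicksupcompare} applied with $A=\unit$ in the untwisted case (where $\supp^\bullet$ collapses to $\supp$) produces
\[
\supp X \;=\; \bigcup_{i\in\ZZ}\supp(\Sigma^i X,\unit)
\]
for any compact $X$. Each of (2)(a)--(c) is then a routine consequence: if $X\in\sfT^{\leq n}$, then part (1)(a) forces $[\unit,\Sigma^i X]^\# = 0$ for all $i>n$, so the corresponding terms in the union are empty and the index range collapses to $i\leq n$. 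Parts (2)(b), (2)(c) follow identically using (1)(b) and (1)(c).

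The only mildly non-formal step is the appeal to $\sfT^c=\thick(\unit)$, which invokes an external classical result rather than something from the excerpt; everything else is a direct combination of the machinery already established. I do not expect any real obstacle beyond this bookkeeping, as the affine hypothesis makes part (1) essentially a translation between module- and sheaf-theoretic vanishing, and part (2) is then purely formal once the index restrictions from part (1) are plugged into the decomposition of $\supp X$.
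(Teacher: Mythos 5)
Your proof is correct and follows essentially the same route as the paper: part (1) is the affine identification $[\unit,\Sigma^i X]^\#\cong\widetilde{\Hom_\sfT(\unit,\Sigma^i X)}$ compared against the description of the t-structure in Theorem \ref{hkmtstruc}, and part (2) combines the decomposition $\supp X=\bigcup_{i\in\ZZ}\supp(\Sigma^i X,\unit)$ coming from Theorem \ref{thicksupcompare} (the paper routes this through Remark \ref{untwisteddecomp}, you through the untwisted case directly, which is equivalent) with the vanishing from part (1). Your explicit check that $\sfT^c=\thick(\unit)$, needed to invoke Theorem \ref{thicksupcompare} and left implicit in the paper, is a welcome extra step rather than a deviation.
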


\begin{proof}
	As $\sfT$ is affine the sheaf $[\unit, \Sigma^i X]^\#$ is equivalent to the sheaf associated to the module $\Hom_\sfT(\unit, \Sigma^i X)$. For $(1)$, by Theorem \ref{hkmtstruc} we have $\sfT^{\leq n}=\{X \in \sfT \ \vert \ \Hom_\sfT(\unit, \Sigma^i X)=0 \ i > n \}$. As $\sfT$ is affine, for each $i > n$, $\Hom_\sfT(\unit, \Sigma^i X)=0$ if and only if $[\unit, \Sigma^i X]^\#=0$. Therefore $\sfT^{\leq n} = \{X \in \sfT\ \vert \ [\unit, \Sigma^i X]^\#=0\ i>n\},$ proving (1)(a). Similar proofs give us (1)(b) and (1)(c). For (2) note that by Remark \ref{untwisteddecomp} and Theorem \ref{thicksupcompare} we have for compact $X$ equalities
	\[\supp X = \supp^\bullet(X, \unit)= \bigcup_{i \in \ZZ}\supp(\Sigma^i X, \unit),\]
	where $\supp^\bullet(X, \unit)$ is twisted with respect to $\Sigma \unit$. Now if $X \in \sfT^{\leq n}$, by part (1)(a) $[\unit,\Sigma^i X]^\#=0$ for $i < n$. Therefore by definition $\supp(\Sigma^i X, \unit)=\varnothing$ for all $i < n$ and so $\bigcup_{i \in \ZZ}\supp(\Sigma^i x, \unit)=\bigcup_{i \leq n}\supp(\Sigma^i X, \unit)$, proving (2)(a). Similar proofs provide the results in (2)(b) and (2)(c), completing the proof.
\end{proof}

We want to investigate when an object can be analysed via an appropriate object in the heart of the t-structure, as in the following definition.

\begin{defn}
	Given an object $X \in \sfT$ we say that an object $X^\heartsuit \in \sfT^\heartsuit$ is a \emph{hearty replacement} for $X$ if 
	\[[\unit, X]^\#=[\unit, X^\heartsuit]^\#.\]
	We say that $X$ can be \emph{heartily replaced} by $X^\heartsuit$. 
\end{defn}

When the comparison map of spectra is an isomorphism then the heart of the t-structure encodes information about associated sheaf functors in the obvious way.

\begin{prop}
	\label{firstheart}
	Suppose $\sfT$ is affine and that the unit object connectively generates a t-structure. Then 
	\begin{itemize}
		\item $\sfT^\heartsuit\simeq \QCoh\Spec(R_{\sfT}).$
		\item Each object $X \in \sfT$ has a hearty replacement $X^\heartsuit$. That is, for an object $X \in \sfT$ there exists an object $X^\heartsuit \in \sfT^\heartsuit$ such that 
		\[[\unit, X]^\# \cong [\unit,X^\heartsuit]^\#.\] 
	\end{itemize}
\end{prop}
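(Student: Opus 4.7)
The plan is to address the two bullets in order, both leveraging the HKM theorem (Theorem \ref{hkmtstruc}) together with the affine identification of associated sheaves from \S7.

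For the equivalence $\sfT^\heartsuit \simeq \QCoh\Spec(R_\sfT)$, the hypothesis that $\unit$ connectively generates a t-structure places us in the setup of Theorem \ref{hkmtstruc}, yielding an equivalence $\Hom_\sfT(\unit,-) \colon \sfT^\heartsuit \xrightarrow{\sim} \Modu R_\sfT^{\op}$. As $R_\sfT = \End_\sfT(\unit)$ is commutative (being the endomorphism ring of a unit in a symmetric monoidal category), $R_\sfT^{\op} = R_\sfT$, and the classical Serre equivalence $\Modu R \simeq \QCoh\Spec(R)$ for commutative $R$ completes this half.

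For the hearty replacement, my candidate is $X^\heartsuit := H^0(X)$. Since $\sfT$ is affine, the corollary in \S7 identifies $[\unit,Y]^\#$ with the quasi-coherent sheaf $\widetilde{\Hom_\sfT(\unit,Y)}$ for every $Y \in \sfT$, so the claim reduces to producing a natural isomorphism of $R_\sfT$-modules $\Hom_\sfT(\unit,X) \cong \Hom_\sfT(\unit,H^0(X))$. I would establish this in two steps. First, applying $\Hom_\sfT(\unit,-)$ to the canonical triangle $\tau^{\leq 0}X \to X \to \tau^{\geq 1}X$ and using that $\unit \in \sfT^{\leq 0}$ by connectivity together with the adjunction $i^{\leq 0}\dashv \tau^{\leq 0}$, one obtains $\Hom_\sfT(\unit,X) \cong \Hom_\sfT(\unit,\tau^{\leq 0}X)$. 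Second, applying $\Hom_\sfT(\unit,-)$ to $\tau^{\leq -1}X \to \tau^{\leq 0}X \to H^0(X) \to \Sigma \tau^{\leq -1}X$, the HKM description $\sfT^{\leq n} = \{Y : \Hom_\sfT(\unit,\Sigma^i Y) = 0 \text{ for } i > n\}$ forces the outer terms to vanish, since $\tau^{\leq -1}X \in \sfT^{\leq -1}$ and $\Sigma\tau^{\leq -1}X \in \sfT^{\leq -2}$. Hence $\Hom_\sfT(\unit,\tau^{\leq 0}X) \cong \Hom_\sfT(\unit,H^0(X))$, which chained with the first isomorphism and sheafified yields $[\unit, X]^\# \cong [\unit, H^0(X)]^\#$.

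The hardest part is less an obstacle than a sanity check: one must verify the vanishing of the relevant $\Hom$-groups in the correct aisle, but the HKM characterization of $\sfT^{\leq n}$ in terms of vanishing of $\Hom_\sfT(\unit, \Sigma^i -)$ makes this essentially immediate. Both bullets are thus quick consequences of Theorem \ref{hkmtstruc} once the affine identification of associated sheaves with sheafified $\Hom$-modules is in hand.
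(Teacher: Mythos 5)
Your proof is correct, but your treatment of the second bullet takes a genuinely different route from the paper. For the first bullet you argue exactly as the paper does: the HKM equivalence $\Hom_\sfT(\unit,-)\colon\sfT^\heartsuit\xrightarrow{\sim}\Modu R_\sfT$ from Theorem \ref{hkmtstruc} composed with $\Modu R_\sfT\simeq\QCoh\Spec(R_\sfT)$ (the paper additionally records that, by affineness, this composite is implemented by $[\unit,-]^\#$, which it needs for what comes next). For the second bullet the paper gives a pure existence argument: $[\unit,X]^\#$ is quasi-coherent because $\sfT$ is affine, so it lies in the essential image of the equivalence $[\unit,-]^\#\colon\sfT^\heartsuit\simeq\QCoh\Spec(R_\sfT)$, and some $X^\heartsuit$ exists; no explicit object is produced at this stage. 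You instead exhibit the replacement concretely as $X^\heartsuit=H^0(X)$, proving $\Hom_\sfT(\unit,X)\cong\Hom_\sfT(\unit,\tau^{\leq 0}X)\cong\Hom_\sfT(\unit,H^0(X))$ by the adjunction $i^{\leq 0}\dashv\tau^{\leq 0}$ (valid since connectivity puts $\unit\in\sfT^{\leq 0}$) and the vanishing of $\Hom_\sfT(\unit,-)$ on $\sfT^{\leq -1}$ and $\sfT^{\leq -2}$, which the HKM description of the aisles makes immediate; sheafifying via the affine identification finishes. This is precisely the content the paper defers to Lemma \ref{littlecohlemma} and Proposition \ref{affinecohom}, where the same conclusion is reached by truncating in the other order (first $\Hom_\sfT(\unit,X)\cong\Hom_\sfT(\unit,\tau^{\geq 0}X)$ via the triangle $\tau^{\leq -1}X\to X\to\tau^{\geq 0}X$, then comparing $\tau^{\geq 0}X$ with $H^0X$). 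What the paper's argument buys is brevity at this point in the text; what yours buys is an explicit, functorial choice of hearty replacement already inside the proof of Proposition \ref{firstheart}, in fact establishing the stronger statement the paper only proves two results later.
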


\begin{proof}
	Consider the following diagram:
	\[
	\begin{tikzcd}
		\sfT^\heartsuit \arrow[rr, "{\Hom_\sfT(\unit,-)}"] \arrow[rrdd, "{[\unit,-]^\#}"'] &  & \Modu R_\sfT \arrow[dd, "\widetilde{(-)}"] \\
		&  &                                                       \\
		&  & \QCoh\Spec(R_\sfT)                              
	\end{tikzcd}
	\]
	The top arrow is an equivalence by Theorem \ref{hkmtstruc}. The vertical arrow is an equivalence as $\Spec(R_{\sfT})$ is an affine scheme. As $\sfT$ is affine, for any object $X \in \sfT$ we have $[\unit,X]^\#\cong \widetilde{\Hom_\sfT(\unit,X)}$ i.e. $[\unit,-]^\# \cong \widetilde{(-)}\circ \Hom_\sfT(\unit,-)$. Therefore our diagram commutes and the final diagonal arrow is an equivalence, as required. For the second part, note that $[\unit,X]^\#$ is quasi-coherent for all $X \in \sfT$ as $\sfT$ is affine, so via the equivalence in our diagram there must exist $X^\heartsuit \in \sfT^\heartsuit$ such that $[\unit, X]^\# \cong [\unit, X^\heartsuit]^\#$ as claimed.
\end{proof}

Suppose instead that $\sfT$ is not affine. Then the diagram of Proposition \ref{firstheart} may not be commutative. Define a functor \[(-)^\flat = \widetilde{\Hom_\sfT(\unit,-)}\] and consider the diagram 

\[
\begin{tikzcd}
	\sfT \arrow[rr, "{\Hom_\sfT(\unit,-)}"] \arrow[rrdd, "(-)^\flat"] \arrow[dd, "{[\unit,-]^\#}"'] &  & \Modu R_{\sfT} \arrow[dd, "\widetilde{(-)}"] \\
	&  &                                                      \\
	\Shv\Spec(\sfT)                                                                                                             &  & \QCoh\Spec(R_{\sfT}) \arrow[ll, "\rho^*"']       
\end{tikzcd}\]
By definition the upper triangle commutes. If the lower triangle commuted then $[\unit, X]^\#$ would be quasi-coherent for all $X \in \sfT$. We show that under mild conditions the lower triangle fails to be commutative.

\begin{prop}
	\label{countershort}
	Suppose that $\Spc(\sfT^c)$ contains a non-zero prime ideal and that \[\Hom_{\sfT^c}(\unit,\unit)=k\] is a field. Then 
	\[[\unit, -]^\# \not\cong \rho^\ast (-)^\flat.\]
\end{prop}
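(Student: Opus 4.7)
The plan is to produce a single object $Y \in \sfT^c$ whose associated sheaf differs from its pullback at the given non-zero prime $\mcP$. First I would pick any non-zero object $Y_0 \in \mcP$, which exists because $\mcP$ is a non-zero ideal, and then set $Y := Y_0^\vee \otimes Y_0$. Since $\mcP$ is a tensor ideal the object $Y$ again lies in $\mcP$, and since $\sfT^c$ is rigid there is a natural isomorphism $\Hom_\sfT(\unit, Y) \cong \Hom_\sfT(Y_0, Y_0)$ under which $\id_{Y_0}$ corresponds to a non-zero element. Hence $\Hom_\sfT(\unit, Y) \neq 0$.

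Next I would compute both stalks at $\mcP$. Since $\Spec(R_\sfT) = \Spec(k)$ is a single point, the sheaf $Y^\flat = \widetilde{\Hom_\sfT(\unit,Y)}$ has stalk $\Hom_\sfT(\unit,Y)$ at that point, and $\rho^\ast(Y^\flat)$ is therefore the constant sheaf on $\Spc(\sfT^c)$ with value $\Hom_\sfT(\unit,Y)$; in particular $\rho^\ast(Y^\flat)_\mcP \cong \Hom_\sfT(\unit,Y) \neq 0$. On the other hand, $Y \in \mcP$ forces $\mcP \notin \supp(Y)$, so $U := U(Y)$ is a quasi-compact open neighbourhood of $\mcP$ on which $Y$ is annihilated by the localisation, giving $\prescript{}{\mathrm{p}}{[}\unit,Y](U) = \Hom_{\sfT(U)}(\unit_U, 0) = 0$. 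Taking the colimit over quasi-compact open neighbourhoods of $\mcP$ then yields $[\unit,Y]^\#_\mcP = 0$.

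The two sheaves therefore have different stalks at $\mcP$, so they are not isomorphic and the natural transformation between the functors cannot be an isomorphism. The main obstacle is the construction of $Y$: one needs an object of $\mcP$ that nevertheless admits non-zero maps from $\unit$, and the duality device $Y = Y_0^\vee \otimes Y_0$ is what produces both properties simultaneously. Everything else reduces to the routine computation of the two stalks using the collapse of $\rho$ onto a single point and the vanishing of $Y_U$ in the localisation at $U(Y)$.
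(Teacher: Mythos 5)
Your proof is correct, and it reaches the conclusion by a more explicit route than the paper. The paper's own argument is structural: since $k$ is a field, every $\mcO_k$-module is free, so any pullback $\rho^\ast\mcF$ lies in the additive closure of $\mcO_\sfT$ and hence is supported everywhere, whereas $[\unit,M]^\#$ for $M\in\mcP$ is not supported at $\mcP$; it never exhibits a specific object with $\Hom_\sfT(\unit,M)\neq 0$, which leaves a small loose end (if $[\unit,M]^\#$ happened to vanish it would trivially be a pullback). Your choice $Y=Y_0^\vee\otimes Y_0$ with $Y_0\in\mcP$ nonzero closes exactly that gap: rigidity gives $\Hom_\sfT(\unit,Y)\cong\Hom_\sfT(Y_0,Y_0)\neq 0$ while $Y\in\mcP$ forces the stalk $[\unit,Y]^\#_\mcP\cong\Hom_{\sfT^c/\mcP}(\unit,Y)=0$ (your colimit argument works, or one can quote Lemma \ref{generalstalk} directly), so the two functors are distinguished by a stalk at a single prime. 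One small correction: $\rho^\ast(Y^\flat)$ is not the constant sheaf with value $\Hom_\sfT(\unit,Y)$; since $\Hom_\sfT(\unit,Y)$ is a free $k$-module, the pullback is a direct sum of copies of $\mcO_\sfT$, and its stalk at $\mcP$ is $\Hom_\sfT(\unit,Y)\otimes_k\mcO_{\sfT,\mcP}$. The nonvanishing you need then follows because $\mcO_{\sfT,\mcP}\cong R_{\sfT^c/\mcP}\neq 0$, as $\unit\notin\mcP$; with that emendation the argument is complete and, if anything, tighter than the one in the text.
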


\begin{proof}
	Consider an sheaf $\mcF$ in \(\Modu\mcO_k\). Such an object is in the additive closure of the structure sheaf and so the pull back $\rho^\ast \mcF$ must be in the additive closure of $\mcO_{\sfT}$. Every object in this closure must be supported everywhere. Now consider an object $M \in \mcP$, where $\mcP$ is a non-zero prime ideal in $\Spc(\sfT^c)$. The associated sheaf $[\unit,M]^\#$ is not supported at $\mcP$ and so $[\unit,M]^\#$ is not in the additive closure of $\mcO_{\sfT}$. Therefore $[\unit,M]^\#$ is not the pullback of a $\mcO_k$-module along $\rho$ and so $[\unit, -]^\# \not\cong \rho^\ast (-)^\flat$.
\end{proof}

In the affine case we have the following connections between the cohomology functor and the associated sheaves.

\begin{lem}
	\label{littlecohlemma}
	If $\sfT$ affine then for an object $X$ we have
	\begin{enumerate}
		\item $[\unit,X]^\# \cong [\unit, \tau^{\leq 0}X]^\#.$
		\item $[\unit, H^0X]^\# \cong [\unit, \tau^{\geq 0}X]^\#.$
		\item If $X \in \sfT^{\geq 0}$ then $[\unit, X]^\# \cong [\unit,H^0X]^\#.$
	\end{enumerate}
\end{lem}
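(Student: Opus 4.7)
The plan is to reduce everything to the level of $R_\sfT$-modules, using the fact that for affine $\sfT$ the associated sheaf functor satisfies $[\unit,Z]^\# \cong \widetilde{\Hom_\sfT(\unit,Z)}$ (the corollary following Theorem 7.3 in the excerpt). Since $\widetilde{(-)}$ is an exact equivalence from $\Modu\,R_\sfT$ to $\QCoh\Spec(R_\sfT) \simeq \QCoh\Spec(\sfT)$, it suffices to exhibit isomorphisms of $R_\sfT$-modules
\[
\Hom_\sfT(\unit,X) \cong \Hom_\sfT(\unit,\tau^{\leq 0}X), \quad \Hom_\sfT(\unit, H^0X) \cong \Hom_\sfT(\unit, \tau^{\geq 0}X),
\]
and deduce $(3)$ as a corollary of $(2)$.

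For $(1)$, I would apply the homological functor $\Hom_\sfT(\unit, -)$ to the canonical truncation triangle
\[
\tau^{\leq 0}X \to X \to \tau^{\geq 1}X \to \Sigma\tau^{\leq 0}X.
\]
Recall that the t-structure in question is the one connectively generated by $\unit$, so by the defining condition $\sfT^{\geq 1}= \{Y \mid \Hom_\sfT(\unit,\Sigma^i Y)=0 \text{ for } i<1\}$. Applied to $Y = \tau^{\geq 1}X$ this kills $\Hom_\sfT(\unit, \Sigma^{-1}\tau^{\geq 1}X)$ and $\Hom_\sfT(\unit, \tau^{\geq 1}X)$, so the long exact sequence collapses to the desired isomorphism $\Hom_\sfT(\unit, \tau^{\leq 0}X) \xrightarrow{\sim} \Hom_\sfT(\unit, X)$.

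For $(2)$, I would apply $(1)$ to the object $Y := \tau^{\geq 0}X$ to obtain $\Hom_\sfT(\unit, \tau^{\geq 0}X) \cong \Hom_\sfT(\unit, \tau^{\leq 0}\tau^{\geq 0}X)$, then invoke the standard commutativity of truncation functors $\tau^{\leq 0}\tau^{\geq 0} \cong \tau^{\geq 0}\tau^{\leq 0}$ on the heart, so the right-hand side equals $\Hom_\sfT(\unit, H^0 X)$. For $(3)$, the hypothesis $X \in \sfT^{\geq 0}$ gives $\tau^{\geq 0}X \cong X$, and $(2)$ then yields $[\unit,X]^\# \cong [\unit,\tau^{\geq 0}X]^\# \cong [\unit, H^0 X]^\#$.

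The substantive step is $(1)$, but it is not really an obstacle: once one knows that $\tau^{\geq 1}X$ lies in $\sfT^{\geq 1}$ and that the t-structure was produced as in Theorem \ref{hkmtstruc}, the vanishing is immediate. The only small subtlety worth flagging is the appeal to the commutativity of $\tau^{\geq 0}$ and $\tau^{\leq 0}$, which is part of the standard yoga of t-structures and is why $H^0$ can be described by either composition.
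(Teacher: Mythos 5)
Your proof is correct, and the overall strategy --- reduce to $R_\sfT$-modules via the affine identification $[\unit,Z]^\# \cong \widetilde{\Hom_\sfT(\unit,Z)}$ and then manipulate truncations --- is the same as the paper's; the only real divergence is in part (1). The paper proves $\Hom_\sfT(\unit,X)\cong\Hom_\sfT(\unit,\tau^{\leq 0}X)$ by observing that connectivity of $\unit$ places it in $\sfT^{\leq 0}$ and then invoking the adjunction $i^{\leq 0}\dashv\tau^{\leq 0}$ together with fullness of $\sfT^{\leq 0}$, whereas you run the long exact sequence of $\Hom_\sfT(\unit,-)$ on the truncation triangle $\tau^{\leq 0}X\to X\to\tau^{\geq 1}X\to\Sigma\tau^{\leq 0}X$ and kill the two outer terms using the defining vanishing of $\sfT^{\geq 1}$ from Theorem \ref{hkmtstruc}; these encode the same orthogonality, and your version is precisely the technique the paper itself deploys later in the proof of Proposition \ref{affinecohom}, so nothing is lost or gained beyond taste. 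For (2) and (3) you and the paper proceed identically (apply (1) to $\tau^{\geq 0}X$, then specialise to $X\in\sfT^{\geq 0}$); your explicit appeal to the commutation $\tau^{\leq 0}\tau^{\geq 0}\cong\tau^{\geq 0}\tau^{\leq 0}$ is a point the paper leaves implicit --- it defines $H^0(X)=\tau^{\geq 0}\tau^{\leq 0}X$ but writes $H^0X=\tau^{\leq 0}\tau^{\geq 0}X$ in its proof --- so flagging it is a small improvement in care.
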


\begin{proof}
	\begin{enumerate}
		\item First note that as the unit $\unit$ is connective, $\Hom_\sfT(\unit, \Sigma^i \unit)=0$ for all $i > 0$. In other words, $\unit \in \sfT^{\leq 0}$, which is a full subcategory. Using the adjunction between the associated truncation and inclusion functors for $\sfT^{\leq 0}$ we obtain the following string of isomorphisms:
		\begin{align*}
			\Hom_\sfT(\unit, \tau^{\leq 0}X) &\cong \Hom_{\sfT^{\leq0}}(\unit, \tau^{\leq 0}X),\\
			&\cong \Hom_\sfT(i^{\leq 0}\unit, X),\\
			&\cong \Hom_\sfT(\unit, X).
		\end{align*}
		By assumption $\sfT$ is affine. Combining this fact with the above isomorphism we conclude
		\[[\unit, X]^\# \cong \widetilde{\Hom_\sfT(\unit, X)} \cong \widetilde{\Hom_\sfT(\unit, \tau^{\leq 0}X)} \cong [\unit, \tau^{\leq 0}X]^\#.\]
		\item By definition $H^0 X = \tau^{\leq 0}\tau^{\geq 0}X$. Using part (1) we obtain
		\[\Hom_\sfT(\unit, H^0 X)=\Hom_\sfT(\unit, \tau^{\leq 0}\tau^{\geq 0}X)\cong \Hom_\sfT(\unit, \tau^{\geq 0}X).\]
		Again, as $\sfT$ is affine this gives us $[\unit, H^0X]^\# \cong [\unit, \tau^{\geq 0}X]^\#$ as required.
		\item If $X \in \sfT^{\geq 0}$ then $X \cong \tau^{\geq 0}X$ and so
		\[[\unit, X]^\# \cong [\unit, \tau^{\geq 0}X]^\#  \cong [\unit,H^0X]^\#,\]
		where the second isomorphism is from (2).
	\end{enumerate}
\end{proof}

In our hunt for hearty replacements the previous lemma states that if $X\in \sfT^{\geq 0}$ then the zeroth cohomology $H^0 X$ is a hearty replacement for $X$.  
For $\sfT$ affine we can extend this to all objects in $\sfT$ and upgrade the assignment $(-)^\heartsuit$ to a functor via the cohomology functor.

\begin{prop}
	\label{affinecohom}
	Suppose that $\sfT$ has the t-structure connectively generated by $\unit$. Then for all $X \in \sfT$ we have
	\[\Hom_{\sfT}(\unit,X)\cong \Hom_{\sfT}(\unit,H^0X),\]
	and if $\sfT$ is affine we have
	\[[\unit, X]^\# \cong [\unit, H^0X]^\#,\]
	where $H^0$ is the cohomology functor induced by the t-structure connectively generated by $\unit$. 
	We can choose the functor $H^0(-)$ as our assignment $(-)^\heartsuit$. 
\end{prop}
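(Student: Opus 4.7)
The plan is to reduce both claims to the first isomorphism on $\Hom$-groups, which I will prove by chaining two intermediate isomorphisms: the first comparing $X$ with $\tau^{\leq 0} X$ (this is essentially already in Lemma \ref{littlecohlemma}(1) at the ungraded level), and the second comparing $\tau^{\leq 0} X$ with $\tau^{\geq 0} \tau^{\leq 0} X = H^0 X$. The sheaf-level statement will then follow immediately from affineness, since $\sfT$ affine implies $[\unit, -]^\# \cong \widetilde{\Hom_\sfT(\unit, -)}$, so any isomorphism of $R_\sfT$-modules $\Hom_\sfT(\unit, X) \cong \Hom_\sfT(\unit, H^0 X)$ sheafifies to the desired isomorphism of $\mcO_\sfT$-modules.

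For the first intermediate step, connectivity of $\unit$ places $\unit$ in $\sfT^{\leq 0}$, so the adjunction $i^{\leq 0} \dashv \tau^{\leq 0}$ yields
\[\Hom_\sfT(\unit, X) \cong \Hom_{\sfT^{\leq 0}}(\unit, \tau^{\leq 0} X) \cong \Hom_\sfT(\unit, \tau^{\leq 0} X),\]
exactly as in the proof of Lemma \ref{littlecohlemma}(1). For the second intermediate step, I will apply the canonical triangle
\[\tau^{\leq -1} X \to \tau^{\leq 0} X \to H^0 X \to \Sigma \tau^{\leq -1} X\]
(here I use that $\tau^{\leq -1} \tau^{\leq 0} X \cong \tau^{\leq -1} X$ and $H^0 X = \tau^{\geq 0} \tau^{\leq 0} X$) and apply the cohomological functor $\Hom_\sfT(\unit, -)$. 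By the explicit description of the t-structure, $\tau^{\leq -1} X \in \sfT^{\leq -1}$ means $\Hom_\sfT(\unit, \Sigma^i \tau^{\leq -1} X) = 0$ for all $i \geq 0$; in particular the groups $\Hom_\sfT(\unit, \tau^{\leq -1} X)$ and $\Hom_\sfT(\unit, \Sigma \tau^{\leq -1} X)$ both vanish, so the induced long exact sequence collapses to an isomorphism $\Hom_\sfT(\unit, \tau^{\leq 0} X) \cong \Hom_\sfT(\unit, H^0 X)$. Composing with the first isomorphism gives the desired identification.

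The sheaf statement now follows by sheafifying: under the affine assumption,
\[[\unit, X]^\# \cong \widetilde{\Hom_\sfT(\unit, X)} \cong \widetilde{\Hom_\sfT(\unit, H^0 X)} \cong [\unit, H^0 X]^\#.\]
Finally, the assignment $X \mapsto H^0 X$ is by construction a functor $\sfT \to \sfT^\heartsuit$ and, by the isomorphism just established, it computes a hearty replacement in the sense of the preceding definition; so $(-)^\heartsuit := H^0(-)$ is a valid functorial choice.

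I expect no genuine obstacle: the entire argument rests on the standard adjunction properties of truncation functors together with the concrete definition of $\sfT^{\leq n}$ in terms of $\Hom(\unit, \Sigma^i -)$ from Theorem \ref{hkmtstruc}. The only subtlety worth flagging is bookkeeping: since we only assume $\unit$ is connective (hence $\unit \in \sfT^{\leq 0}$) and \emph{not} that $\unit \in \sfT^{\heartsuit}$, the vanishing of $\Hom(\unit, \tau^{\leq -1} X)$ must be justified directly from membership $\tau^{\leq -1} X \in \sfT^{\leq -1}$ rather than from a t-structure orthogonality axiom, and likewise for the shifted term; this is where it is essential that the t-structure is the one connectively generated by $\unit$.
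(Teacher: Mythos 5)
Your proof is correct and uses essentially the same ingredients as the paper's: the truncation adjunction with $\unit \in \sfT^{\leq 0}$, one canonical truncation triangle, and the vanishing of $\Hom_\sfT(\unit, \Sigma^i(-))$ on $\sfT^{\leq -1}$ for $i \geq 0$, followed by sheafification under the affine hypothesis. The only difference is cosmetic bookkeeping: the paper truncates $X$ directly via $\tau^{\leq -1}X \to X \to \tau^{\geq 0}X$ and then passes to $H^0X$ through Lemma \ref{littlecohlemma}(2), whereas you first pass to $\tau^{\leq 0}X$ and then truncate that, which cleanly isolates the $\Hom$-level isomorphism before affineness is invoked, exactly as the statement requires.
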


\begin{proof}
	For $X \in \sfT$ consider the canonical triangle
	\[\tau^{\leq -1}X \to X \to \tau^{\geq 0} X \to \Sigma \tau^{\leq -1}X.\]
	Applying the homological functor $\Hom_\sfT(\unit, -)$ produces a long exact sequence

	\[\begin{tikzpicture}[scale=2]
		\matrix(m)[matrix of math nodes,column sep=15pt,row sep=15pt]{
			\cdots \to \Hom_\sfT(\unit, \tau^{\leq -1}X) & \Hom_\sfT(\unit, X) \\
			\Hom_\sfT(\unit, \tau^{\geq 0}X) &\Hom_\sfT(\unit, \Sigma\tau^{\leq -1}X) \to \cdots \\
		};
		\draw[->,font=\scriptsize,every node/.style={above},rounded corners]
		(m-1-1) edge (m-1-2) 
		(m-1-2.east) --+(5pt,0)|-+(0,-7.5pt)-|([xshift=-5pt]m-2-1.west)--(m-2-1.west)
		(m-2-1) edge (m-2-2)
		;
	\end{tikzpicture}.\]
	As $\tau^{\leq -1}X \in \sfT^{\leq -1}$ we have \[\Hom_\sfT(\unit, \tau^{\leq -1}X) \cong \Hom_\sfT(\unit, \Sigma^0 \tau^{\leq -1}X) \cong 0\] and $\Hom_\sfT(\unit, \Sigma^0 \tau^{\leq -1}X)\cong 0$. Therefore we obtain the short exact sequence
	\[0 \to \Hom_\sfT(\unit, X) \to \Hom_\sfT(\unit, \tau^{\geq 0}X)\to 0\]
	which by exactness gives $\Hom_\sfT(\unit, X) \cong \Hom_\sfT(\unit, \tau^{\geq 0}X)$. As we assumed $\sfT$ affine this gives $[\unit, X]^\# \cong [\unit, \tau^{\geq 0}X]^\#$. Then by Lemma \ref{littlecohlemma} (2) we have 
	\[[\unit, X]^\# \cong [\unit, \tau^{\geq 0}X]^\# \cong [\unit, H^0 X]^\#.\] 
\end{proof}

\begin{rem}
	While Proposition \ref{affinecohom} applies to affine tt-categories, issues arise when trying to extend it to schematic categories. Indeed, Proposition \ref{localtrunc} only guarantees that the cohomology and localisation functors are well behaved when localising over an open of the form $U(\cone(s))$ for some $s \in R_\sfT$. A quasi-affine tt-category $\sfT$ could therefore be a good candidate to extend the proposition. Unfortunately, requiring that the unit $\unit$ connectively generate a t-structure on $\sfT$ means that the hypotheses of Corollary \ref{quasiaffinecomparison} are satisfied, and so our candidate quasi-affine category is in fact affine.   
\end{rem}

\begin{thm}
	Let $\sfT$ be an affine category such that the tensor unit $\unit$ generates $\sfT$ and connectively generates a t-structure on $\sfT$. Then for all objects $X \in \sfT$ we have
	\[\supp^\bullet(X, \unit) = \bigcup_{i \in \ZZ}\supp(H^i(X), \unit),\]
	where the support on the left is twisted by the invertible object $\Sigma \unit$.
\end{thm}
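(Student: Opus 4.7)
The plan is to reduce the twisted support to a union of untwisted supports via Remark \ref{untwisteddecomp}, and then identify each term using the affine replacement result from Proposition \ref{affinecohom}.

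First, I would unpack the left-hand side. Since the twist is by $u = \Sigma\unit$ and the action is the tensor product, Remark \ref{untwisteddecomp} (with $B = X$ and $A = \unit$) gives immediately
\[
\supp^\bullet(X, \unit) \;=\; \bigcup_{i \in \ZZ} \supp(\Sigma^i X,\, \unit).
\]
So the task reduces to showing $\supp(\Sigma^i X, \unit) = \supp(H^i(X), \unit)$ for every $i \in \ZZ$.

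Next, I would apply Proposition \ref{affinecohom} to each shift $\Sigma^i X$. The hypotheses of that proposition—namely that $\sfT$ is affine and that $\unit$ connectively generates the t-structure—are exactly those assumed in the theorem, so for every $Y \in \sfT$ we have $[\unit, Y]^\# \cong [\unit, H^0(Y)]^\#$. Specialising to $Y = \Sigma^i X$ and noting that by the definition $H^0(\Sigma^i X) = \tau^{\geq 0}\tau^{\leq 0}\Sigma^i X = H^i(X)$, this yields
\[
[\unit, \Sigma^i X]^\# \;\cong\; [\unit, H^i(X)]^\#.
\]
Passing to stalks at any prime $\mcP$ and taking the set of primes where the stalk is nonzero gives $\supp(\Sigma^i X, \unit) = \supp(H^i(X), \unit)$.

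Combining the two steps and taking the union over $i \in \ZZ$ produces the claimed equality. The only real content here is Proposition \ref{affinecohom}; the rest is bookkeeping. The mild subtlety—and the step most likely to trip up a careful reader—is the compatibility $H^0(\Sigma^i X) = H^i(X)$ under the cohomological indexing convention fixed earlier in the paper, so I would state this identification explicitly before invoking the replacement isomorphism.
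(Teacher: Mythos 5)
Your proposal is correct and follows the same route as the paper's own proof: decompose the twisted support via Remark \ref{untwisteddecomp}, identify $[\unit,\Sigma^i X]^\#$ with $[\unit, H^0(\Sigma^i X)]^\#$ by Proposition \ref{affinecohom}, and use $H^0(\Sigma^i X)=H^i(X)$ from the definition of the cohomology functors. Your explicit flagging of the indexing identification is a minor expository improvement, but the argument is identical in substance.
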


\begin{proof}
	
	By the previous proposition, we identify the associated sheaf of an object to that of its zeroth cohomology. Applying this to the definition of the $\Sigma\unit$-twisted support we obtain the following equalities: 
	\begin{align*}
		\supp^\bullet(X, \unit) &= \bigcup_{i \in \ZZ}\supp(\Sigma^i X, \unit),\text{ by Remark \ref{untwisteddecomp},}\\
		&=\bigcup_{i \in \ZZ}\supp(H^0\Sigma^i X, \unit),\text{ by the previous theorem,}\\
		&=\bigcup_{i \in \ZZ}\supp(H^i (X), \unit),
	\end{align*}
	completing the proof.
\end{proof}

\appendix

\typeout{}
\bibliography{bibbroke}

\end{document}